\let\@@enum@org\@@enum@
\def\@@enum@[#1]{\@@enum@org[\normalfont #1]}
\newtheorem{THM}{Theorem}[section]
\newtheorem*{THMMAIN}{Theorem \ref{thm:main}}
\newtheorem*{THMMAINMAT}{Theorem \ref{thm:mainmatrix}}
\newtheorem{LEM}[THM]{Lemma}
\newtheorem{COR}[THM]{Corollary}
\newtheorem{PROP}[THM]{Proposition}
\theoremstyle{remark}
\newtheorem*{REM}{Remark}
\newcommand\form[1]{\left\langle #1\right\rangle}
\newcommand\rk{\operatorname{rank}}
\newcommand\MM{\mathcal M}
\newcommand\FF{\mathcal F}
\newcommand\bw{\operatorname{bw}}
\newcommand\dup[1]{{#1}^*}
\newcommand\ddown[1]{{#1}_*}
\newcommand\rwd{\operatorname{rwd}}
\newcommand\delete{\bbslash}
\newcommand\contract{\sslash} 
\newcommand\F{\mathbbm{F}}
\begin{document}
\title[Rank-width and Well-quasi-ordering]
{Rank-width  and Well-quasi-ordering of Skew-Symmetric or
  Symmetric Matrices}
\author{Sang-il Oum}
\address{Department of Mathematical Sciences, KAIST, 335 Gwahangno, Yuseong-gu, Daejeon 305-701, Republic of Korea}
\email{sangil@kaist.edu}
\thanks{Supported by Basic Science Research Program through the National Research Foundation of Korea (NRF) funded by the Ministry of Education, Science and Technology (2010-0001655) and TJ Park Junior Faculty Fellowship.}
\date{July 22, 2010}
\keywords{well-quasi-order, delta-matroid, rank-width, branch-width,  principal pivot
  transformation,  Schur complement}
\begin{abstract}
We prove that every infinite sequence
  of skew-symmetric or symmetric matrices  $M_1$, $M_2$, $\ldots$  
 over a fixed finite field 
  must have a pair $M_i$, $M_j$ $(i<j)$ such
  that $M_i$ is isomorphic to a principal submatrix of 
  the Schur complement of a nonsingular principal submatrix in $M_j$,
  if those matrices have bounded rank-width.
  This generalizes three theorems on
  well-quasi-ordering of graphs or matroids admitting good tree-like decompositions;
  (1) Robertson and Seymour's theorem for
  graphs of bounded tree-width, 
  (2) Geelen, Gerards, and Whittle's theorem
  for matroids representable over a fixed finite field having bounded
  branch-width, 
  and (3) Oum's theorem for graphs of bounded rank-width with respect
  to pivot-minors.
\end{abstract}
\maketitle
\section{Introduction}

For a $V_1\times V_1$ matrix~$A_1$
and a $V_2\times V_2$ matrix~$A_2$, 
an \emph{isomorphism} $f$ from $A_1$ to $A_2$ is a bijective function that
maps $V_1$ to $V_2$ such that 
the $(i,j)$ entry of $A_1$ is equal to the $(f(i),f(j))$ entry of $A_2$
for all $i,j\in V_1$.
Two square matrices $A_1$, $A_2$ are \emph{isomorphic} if there is an
isomorphism from $A_1$ to $A_2$. Note that an isomorphism allows
permuting rows and columns simultaneously.
For a $V\times V$ matrix $A$ and a subset $X$ of its ground set $V$, 
we write $A[X]$ to denote the principal submatrix of $A$ induced by
$X$.
Similarly, we write $A[X,Y]$ to denote the $X\times Y$ submatrix of $A$.
Suppose that a $V\times V$ matrix $M$ has the following form:
\[
M=
\bordermatrix{
&\quad Y\quad & V\setminus Y\cr
Y & A& B\cr
V\setminus Y& C & D
}.
\]
If $A=M[Y]$ is nonsingular, then we define  the \emph{Schur complement
$(M/A)$ of $A$ in $M$}  to be 
\[(M/A)= D-CA^{-1}B.\]
(If $Y=\emptyset$, then $A$ is nonsingular and $(M/A)=M$.)
Notice that if $M$ is skew-symmetric or symmetric, then  $(M/A)$ 
is skew-symmetric or symmetric, respectively.

We prove that skew-symmetric or symmetric matrices over a fixed finite
field
are \emph{well-quasi-ordered} under the relation 
defined in terms of taking a principal submatrix and a Schur
complement, 
if they have bounded \emph{rank-width}. 
Rank-width of a skew-symmetric or symmetric matrix 
will be defined precisely in Section~\ref{sec:prelim}.
Roughly speaking,
it is 
a measure to describe how easy it is to decompose the matrix into a tree-like structure
so that the connecting matrices have small rank. Rank-width of
matrices generalizes rank-width of simple graphs introduced by Oum and
Seymour~\cite{OS2004},
and branch-width of graphs and matroids by Robertson and Seymour~\cite{RS1991}.
Here is our main theorem.
\begin{THMMAINMAT}
  Let $\F$ be a finite field and let $k$ be a constant. 
  Every infinite sequence $M_1$, $M_2$, $\ldots$  
  of skew-symmetric or
  symmetric matrices over $\F$
  of rank-width at most $k$
 has a pair $i<j$ such that 
  $M_i$ is isomorphic to a principal submatrix of 
 $(M_j/A)$
  for some nonsingular principal submatrix $A$ of $M_j$.
\end{THMMAINMAT}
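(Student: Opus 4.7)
My plan is to follow the general template used in the three theorems this result generalizes: encode each matrix by a subcubic tree carrying finitely many possible labels, use Schur complements as the matrix-level analog of pivoting, and then invoke a Kruskal-style well-quasi-ordering theorem for labeled subcubic trees.

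First I would fix, for each $M_i$, a rank-decomposition $(T_i,L_i)$ of width at most $k$: a subcubic tree $T_i$ whose leaves are in bijection with the ground set $V_i$, such that every edge $e$ induces a partition $(X_e,V_i\setminus X_e)$ with $\rk M_i[X_e,V_i\setminus X_e]\le k$. Across each edge the ``interaction'' between the two sides is described by an at most $k\times k$ block of rank data over the finite field $\F$, of which there are only finitely many up to the equivalence given by left and right invertible transformations. I would attach to every internal node of $T_i$ a label that records, modulo this equivalence, how the three incident boundaried pieces meet. Because $\F$ is finite and the interaction rank is bounded by $k$, the resulting node-label alphabet is finite.

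Next comes the key \emph{embedding lemma}: whenever $(T_i,\text{labels})$ admits a topological tree-embedding into $(T_j,\text{labels})$ in the sense used in the subcubic Kruskal theorem of Robertson--Seymour (and sharpened by Geelen--Gerards--Whittle and by Oum in the pivot-minor setting), the matrix $M_i$ is isomorphic to a principal submatrix of $(M_j/A)$ for some nonsingular principal submatrix $A$ of $M_j$. The idea is that each subtree of $T_j$ that is ``skipped'' by the embedding supports, on its leaf set, a principal submatrix $A$ on which one can take a Schur complement; because the edges cut by the embedding have rank at most $k$ and the node labels were chosen fine enough to determine the boundaried piece modulo equivalence, the Schur complement $(M_j/A)$ agrees, on the leaves hit by the embedding, with $M_i$ up to isomorphism. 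I expect this step to be the main obstacle: one must (a) choose the labels so that they carry exactly the information preserved by an isomorphism of the piece together with its boundary, (b) verify that Schur complement with respect to a nonsingular principal submatrix localised to one subtree acts on the rest of $M_j$ only through that subtree's boundary, which should follow from the standard block formulas for Schur complements, and (c) match embeddings of trees with compositions of such pivot steps followed by a principal submatrix restriction.

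Once this correspondence between tree embeddings and Schur-complement minors is established, the conclusion is immediate: the sequence $(T_1,L_1),(T_2,L_2),\ldots$ consists of subcubic trees labeled in a fixed finite set, so the appropriate version of Kruskal's theorem for labeled subcubic trees produces indices $i<j$ and a label-respecting topological embedding $T_i\hookrightarrow T_j$, and the embedding lemma converts this into the desired pair $M_i,M_j$. The plan thus reduces the theorem to two essentially independent ingredients, a combinatorial Kruskal-type statement which is already in the literature, and a matrix-theoretic correspondence between topological tree embeddings and Schur-complement submatrices, with the latter being where the real work lies.
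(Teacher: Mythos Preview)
Your outline is in the right spirit, but the embedding lemma as you state it contains a genuine gap. You write that each subtree of $T_j$ skipped by the embedding ``supports, on its leaf set, a principal submatrix $A$ on which one can take a Schur complement.'' This is false in general: the principal submatrix on the full set of skipped leaves has no reason to be nonsingular. What actually happens is that the removed elements must be split into two classes---those that are \emph{contracted} (these form $A$, and one must argue that $A$ can be taken nonsingular) and those that are \emph{deleted} (rows and columns simply dropped). Deciding, for each removed leaf, which of the two operations to apply is not dictated by the tree embedding alone; it depends on the structure across the boundary, and making a consistent global choice is exactly where the difficulty lies.

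The paper resolves this by working not with matrices directly but with Lagrangian chain-groups, an algebraic object in which deletion and contraction are both primitive operations. Each edge of the branch-decomposition is assigned a \emph{boundaried} chain-group (the restriction $N\times A_e$ together with an ordered basis of $(N\times A_e)^\bot/(N\times A_e)$), and the crucial step you are missing is an analogue of Tutte's linking theorem: one first passes to a \emph{linked} branch-decomposition, and then the linking theorem guarantees that along any $\lambda$-linked path the boundary of the smaller piece is literally a minor of the boundary of the larger one. Only with boundaries chosen this way do the ``connection types'' at internal vertices form a finite set and compose correctly under the lemma on cubic trees. Your proposed node labels (``interaction block modulo equivalence'') are too coarse to carry this boundary data, and without linked decompositions there is no mechanism ensuring that a label-respecting tree embedding yields a chain-group minor. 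Finally, the passage from a chain-group minor back to a single Schur complement of a nonsingular principal submatrix requires a separate minimality argument (Lemma~\ref{lem:finerminor} in the paper), which shows that the contracted set can always be taken so that $M_j[X]$ is nonsingular.
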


It may look like a  purely linear algebraic result.
However, it implies
the following
well-quasi-ordering theorems on graphs
and matroids admitting  `good tree-like decompositions.'
\begin{itemize}
\item (Robertson and Seymour~\cite{RS1991}) Every infinite sequence
  $G_1$, $G_2$, $\ldots$ of graphs of bounded tree-width has a pair
  $i<j$ such that $G_i$ is isomorphic to a minor of $G_j$. 
\item (Geelen, Gerards, and Whittle~\cite{GGW2002}) Every
  infinite sequence $M_1$, $M_2$, $\ldots$ of matroids representable
  over a fixed finite field having bounded branch-width
  has a pair $i<j$ such that $M_i$ is isomorphic to a minor of $M_j$.
\item (Oum~\cite{Oum2004a}) Every infinite sequence
  $G_1$, $G_2$, $\ldots$ of simple graphs of bounded rank-width has a pair
  $i<j$ such that $G_i$ is isomorphic to a pivot-minnor of $G_j$. 
\end{itemize}

We ask, as an open problem, whether the requirement on  rank-width is necessary in 
Theorem~\ref{thm:mainmatrix}.
It is likely that our theorem for matrices of bounded
rank-width is a step towards this problem, as Roberson and Seymour
also started with graphs of bounded tree-width. If we have a positive
answer, then this would imply Robertson and Seymour's graph
minor theorem \cite{RS2004a} 
as well as an open problem on the well-quasi-ordering of matroids
representable over a fixed finite field~\cite{GGW2006a}.

A big portion of this paper is devoted to introduce Lagrangian
chain-groups
and prove their relations to skew-symmetric or symmetric
matrices.
One can regard Sections~\ref{sec:lagrangian} and
\ref{sec:matrix} as an almost separate paper introducing
Lagrangian chain-groups, 
their matrix representations, 
and their relations to delta-matroids.
In particular, Lagrangian chain-groups provide an alternative definition of
representable delta-matroids. 
The situation is comparable to Tutte chain-groups,\footnote{We call
  Tutte's chain-groups 
  as \emph{Tutte chain-groups} to 
  distinguish from chain-groups defined in Section~\ref{sec:lagrangian}.}
introduced by
Tutte~\cite{Tutte1956}. Tutte~\cite{Tutte1958} showed that a matroid
is representable over a
field $\F$
 if and only if it is representable by a Tutte chain-group over $\F$.
We prove an analogue of his theorem; \emph{a delta-matroid is representable
over a field $\F$ if and only if it is representable by a Lagrangian
chain-group over $\F$.}
We believe that the notion of  Lagrangian chain-groups 
will be useful to extend the matroid theory to representable delta-matroids.

To prove well-quasi-ordering, we work on Lagrangian chain-groups
instead of skew-symmetric or symmetric matrices
for the convenience.
The main proof of the well-quasi-ordering of Lagrangian chain-groups
is in Sections~\ref{sec:tutte} and
\ref{sec:wqochain}. Section~\ref{sec:tutte} proves a theorem
generalizing Tutte's linking theorem for matroids, which in turn
generalizes Menger's theorem. 
The proof idea in Section~\ref{sec:wqochain} is similar to the
proof of Geelen, Gerards, and Whittle's theorem~\cite{GGW2002} for
representable matroids.


The last two sections discuss how the result on Lagrangian
chain-groups imply our main theorem and its other corollaries. 
Section~\ref{sec:wqomat} formulates the result of Section~\ref{sec:wqochain}
in terms of skew-symmetric or symmetric matrices with respect to the
Schur complement and explain its implications for representable
delta-matroids and simple graphs of bounded rank-width.
Section~\ref{sec:matroid} explains why our theorem implies the
theorem for representable matroids by Geelen, Gerards, and Whittle~\cite{GGW2002}
via Tutte chain-groups.

\section{Preliminaries}\label{sec:prelim}
\subsection{Matrices}
For two sets $X$ and $Y$, we write $X\Delta Y=(X\setminus Y)\cup
(Y\setminus X)$. A $V\times V$ matrix $A$ is called \emph{symmetric} if $A=A^t$, 
\emph{skew-symmetric} if $A=-A^t$
and all of its diagonal entries are zero. 
We require each diagonal entry of a skew-symmetric matrix to be zero,
even if the underlying field has characteristic $2$.

Suppose that a $V\times V$ matrix $M$ has the following form:
\[
M=
\bordermatrix{
& \quad Y\quad & V\setminus Y\cr
Y & A& B\cr
V\setminus Y& C & D
}.
\]
If $A=M[Y]$ is nonsingular, then we define a matrix $M*Y$ by 
\[
M*Y=
\bordermatrix{
&\quad Y\quad & V\setminus Y \cr
Y&A^{-1}  & A^{-1}B \cr
V\setminus Y&-CA^{-1} & (M/A)
}.
\]
This operation is called a \emph{pivot}. In the literature, it has
been called a \emph{principal pivoting}, a \emph{principal pivot
  transformation}, and other various names; we refer to the survey by
Tsatsomeros~\cite{Tsatsomeros2000}.

Notice that if $M$ is skew-symmetric, then so is $M*Y$.
If $M$ is symmetric, then so is $(I_Y)(M*Y)$,
where $I_Y$ is a diagonal matrix such that the diagonal entry indexed by an
element in $Y$ is $-1$ and all other diagonal entries are $1$.

The following theorem implies that 
$(M*Y)[X]$ is nonsingular if and only if 
$M[X\Delta Y]$ is nonsingular.  
\begin{THM}[Tucker \cite{Tucker1960}]\label{thm:tucker}
  Let $M[Y]$ be a nonsingular principal submatrix of 
  a $V\times V$ matrix $M$. Then for all
  $X\subseteq V$, 
  \[
  \det(M*Y)[X]=\det M[Y\Delta X] / \det M[Y].
  \]
\end{THM}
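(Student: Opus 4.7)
Both sides of the identity depend only on entries of $M$ inside $Y \cup X$: for the right-hand side this is clear, and for the left-hand side it follows from the block definition, since $A^{-1}$, $A^{-1}B$, $-CA^{-1}$, and $M/A$ involve only entries in $Y$ or in $V \setminus Y$, and restricting to $(Y \cup X) \times (Y \cup X)$ leaves these formulas unchanged, yielding $(M*Y)[X] = \bigl(M[Y \cup X] * Y\bigr)[X]$. I therefore assume $V = Y \cup X$. Write $X = Y_0 \sqcup X_0$ with $Y_0 = X \cap Y$ and $X_0 = X \setminus Y$, and $Y_1 = Y \setminus Y_0$, so that $Y \Delta X = Y_1 \cup X_0 = V \setminus Y_0$.

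After clearing the denominator $\det M[Y]$, the claim becomes a polynomial identity with integer coefficients in the entries of $M$, so it suffices to treat the universal case where those entries are algebraically independent indeterminates; then $M$, $A := M[Y]$, and $M/A$ are all invertible. With $B = M[Y, X_0]$ and $C = M[X_0, Y]$, the submatrix $(M*Y)[X]$ has the $2 \times 2$ block form
\[
(M*Y)[X] = \begin{pmatrix} (A^{-1})[Y_0] & (A^{-1}B)[Y_0, X_0] \\ -(CA^{-1})[X_0, Y_0] & M/A \end{pmatrix},
\]
and the Schur-complement determinant formula applied to the invertible lower-right block $M/A$ gives
\[
\det(M*Y)[X] = \det(M/A) \cdot \det K,
\]
where $K := (A^{-1})[Y_0] + (A^{-1}B)[Y_0, X_0]\,(M/A)^{-1}\,(CA^{-1})[X_0, Y_0]$.

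The decisive observation is that $K$ is exactly the $Y_0 \times Y_0$ principal submatrix of $M^{-1}$: the standard block-inverse formula gives $(M^{-1})[Y,Y] = A^{-1} + A^{-1}B(M/A)^{-1}CA^{-1}$, and restricting its rows and columns to $Y_0$ reproduces $K$ term-for-term. Combined with $\det M = \det A \cdot \det(M/A)$, this yields
\[
\det(M*Y)[X] \cdot \det M[Y] = \det M \cdot \det(M^{-1})[Y_0],
\]
and Jacobi's identity for principal submatrices of an inverse rewrites the right-hand side as $\det M[V \setminus Y_0] = \det M[Y \Delta X]$, finishing the proof.

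I expect the main obstacle to be spotting the identification $K = (M^{-1})[Y_0]$; without it, the four $A^{-1}$-shaped factors in $K$ look like unrelated objects and one is forced into a messier route, such as induction on $|Y|$ via single-element pivots, which would additionally require a separate verification that pivots compose. Once this identification is in place, the proof reduces to two standard tools: the Schur-complement determinant formula and Jacobi's identity.
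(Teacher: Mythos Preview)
Your argument is correct. The reduction to $V=Y\cup X$ is valid, the block form of $(M*Y)[X]$ is right, the Schur complement with respect to the invertible block $M/A$ yields your $K$, the identification $K=(M^{-1})[Y_0]$ follows exactly from the block-inverse formula as you say, and Jacobi's identity for principal minors (where the sign is $+1$) finishes the computation cleanly.

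As for comparison: the paper does not actually prove this theorem but simply refers the reader to Bouchet's proof as presented in Geelen's thesis. So there is no in-paper argument to compare against; your self-contained proof via the block-inverse formula and Jacobi's identity is a perfectly good substitute. One minor stylistic point: you might state explicitly that Jacobi's identity for \emph{principal} minors carries no sign, since the general complementary-minor formula does have a sign $(-1)^{\sum I+\sum J}$ that happens to vanish when $I=J$.
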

\begin{proof}
  See Bouchet's proof in Geelen's thesis paper \cite[Theorem 2.7]{Geelen1995}.
\end{proof}

\subsection{Rank-width}
A tree is called \emph{subcubic} if every vertex has at most three
incident edges.
We define \emph{rank-width} of a skew-symmetric or symmetric $V\times
V$ matrix $A$ over a field $\F$ by 
rank-decompositions  as follows.
A \emph{rank-decomposition} of $A$ is a
pair $(T,\mathcal L)$ of a subcubic tree $T$ and a bijection
$\mathcal L:V\rightarrow \{t:\text{$t$ is a leaf of $T$}\}$.
For each edge $e=uv$ of the tree $T$, the connected components of $T\setminus e$ 
form a partition $(X_e,Y_e)$ of the leaves of $T$ 
and we call $\rk A[\mathcal L^{-1}(X_e),\mathcal L^{-1}(Y_e)]$ 
the \emph{width} of $e$.
The \emph{width} of a rank-decomposition $(T,\mathcal L)$ is the maximum
width of all edges of $T$.
The \emph{rank-width} $\rwd(A)$ of a skew-symmetric or symmetric
$V\times V$ matrix $A$ over $\F$
is the minimum
width of all its rank-decompositions.
(If $|V|\le 1$, then we define that $\rwd(A)=0$.)

\subsection{Delta-matroids}
Delta-matroids were introduced by Bouchet~\cite{Bouchet1987d}.
A \emph{delta-matroid} is a pair $(V,\FF)$ of a finite set $V$
and a \emph{nonempty} collection $\FF$ of subsets of $V$ such that the following
\emph{symmetric exchange axiom} holds.
\begin{multline*}
\text{
  If $F,F'\in \FF$ and $x\in F\Delta F'$, }\\
\text{then 
  there exists $y\in F\Delta F'$ such that $F\Delta\{x,y\}\in \FF$.
}
\tag{SEA}
\end{multline*}
A member of $\FF$ is called \emph{feasible}.
A delta-matroid is \emph{even}, if cardinalities of 
all feasible sets have the same parity.

Let $\MM=(V,\FF)$ be a delta-matroid.
For a subset $X$ of $V$,  
it is easy to see that $\MM\Delta X=(V,\FF\Delta X)$ is also a delta-matroid, where
$\FF\Delta X=\{F\Delta X:F\in \FF\}$;
this operation is referred to  as \emph{twisting}. 
Also, $\MM\setminus X=(V\setminus X,\FF\setminus X)$ defined by
$\FF\setminus X=\{F\subseteq V\setminus X:F\in \FF\}$ is a
delta-matroid if $\FF\setminus X$ is  nonempty;
we refer to this operation as \emph{deletion}.
Two delta-matroids $\MM_1=(V,\FF_1)$, $\MM_2=(V,\FF_2)$ are 
called \emph{equivalent} if there exists $X\subseteq V$ such that 
$\MM_1=\MM_2\Delta X$.
A delta-matroid that comes from $\MM$ by twisting and/or deletion is
called a \emph{minor} of $\MM$.

\subsection{Representable delta-matroids}
For  a $V\times V$ skew-symmetric or symmetric matrix $A$ over a field $\F$,
let 
\[\FF(A)=\{X\subseteq V:  \text{$A[X]$ is nonsingular}\}\] and $\MM(A)=(V,\FF(A))$. Bouchet
\cite{Bouchet1988b} showed that $\MM(A)$ forms a delta-matroid. 
We call a delta-matroid \emph{representable} over a field $\F$
or \emph{$\F$-representable}
if it  is equivalent to $\MM(A)$
for some  skew-symmetric or symmetric matrix $A$ over $\F$. We also say
that $\MM$ is represented by $A$ if $\MM$ is equivalent to $\MM(A)$.

Twisting (by feasible sets) and deletions are both natural operations
for representable delta-matroids. For $X\subseteq V$, 
$\MM(A)\setminus X=\MM(A[V\setminus X])$, and for a feasible set  $X$,
$\MM(A)\Delta X=\MM(A*X)$ 
by Theorem~\ref{thm:tucker}.
Therefore minors of a $\F$-representable delta-matroid
are $\F$-representable \cite{BD1991}.

\subsection{Well-quasi-order.}
In general, we say that a binary relation $\le$ on a set  $X$ 
is a \emph{quasi-order} if
it is reflexive and transitive.
For a quasi-order $\le$, we say  
``$\le$ is a \emph{well-quasi-ordering}''
or ``$X$ is \emph{well-quasi-ordered} by $\le$''
if for every infinite sequence $a_1,a_2,\ldots$ of elements of $X$,
there exist $i<j$ such that $a_i\le a_j$.
For more detail, see Diestel \cite[Chapter 12]{Diestel2005}.

\section{Lagrangian chain-groups}\label{sec:lagrangian}
\subsection{Definitions}
\label{subsec:def}
If $W$ is a vector space with a bilinear form $\form{\,,\,}$ 
and $W'$ is a subspace of $W$ satisfying 
\[ \form{x,y}=0 \text{ for all }x,y\in W',\]
then $W'$ is called \emph{totally isotropic}.
A vector $v\in W$ is called \emph{isotropic} if $\form{v,v}=0$.
A well-known theorem in linear algebra states that if a bilinear
form $\form{\,,\,}$ is non-degenerate in $W$ 
and $W'$ is a totally isotropic
subspace of $W$, then $\dim(W)= \dim(W')+\dim(W'^\bot)\ge2\dim(W')$
because $W'\subseteq W'^\bot$.

Let $V$ be a finite set and  $\F$ be a field.
Let $K=\F^2$ be a two-dimensional vector space over $\F$.
Let $b^+\left(\tbinom{a}{b},\tbinom{c}{d}\right)=ad+bc$ and
$b^-(\tbinom{a}{b},
\tbinom{c}{d})=ad-bc$ be bilinear
forms on $K$.
We assume that $K$ is equipped with a bilinear form $\form{\,,\,}_K$
that is either $b^+$ or $b^-$. Clearly $b^+$ is symmetric and $b^-$
is skew-symmetric.

A \emph{chain} on $V$ to $K$ is a mapping $f:V\rightarrow K$.
If $x\in V$, the element $f(x)$ of $K$ is called the \emph{coefficient}
of $x$ in $f$. If $V$ is nonnull, there is a \emph{zero chain} on $V$
whose coefficients are $0$.
When $V$ is null, we say that there is just one chain on $V$ to $K$
and we call it a zero chain. 

The \emph{sum} $f+g$ of two chains $f$, $g$ is the chain on $V$ satisfying 
$(f+g)(x)=f(x)+g(x)$  for all $x\in V$.
If $f$ is a chain on $V$ to $K$ and $\lambda \in \F$, the \emph{product}
$\lambda f$ is a chain on $V$ such that 
$(\lambda f)(x)=\lambda f(x) $ for all $x\in V$.
It is easy to see that the set of all chains on $V$ to $K$, denoted by
$K^V$, is a vector space. 
We give a bilinear form $\form{\,,\,}$ to $K^V$ as following:
\[
\form{f,g}=\sum_{x\in V} \form{f(x),g(x)}_K.
\]
If $\form{f,g}=0$, we say that the chains $f$ and $g$ are
\emph{orthogonal}.
For a subspace $L$ of $K^V$, we write $L^\bot$ for the set of all
chains orthogonal to every chain in $L$.

A \emph{chain-group} on $V$ to $K$ is a subspace of $K^V$.
A chain-group is called \emph{isotropic} if 
it is a totally isotropic subspace.
It is called \emph{Lagrangian} if 
it is isotropic and 
has dimension $|V|$.
We say a chain-group $N$ is over a field $\F$ if $K$ is obtained from
$\F$ as described above.

A \emph{simple isomorphism} from a chain-group  $N$ on $V$ to $K$ to another
chain-group $N'$ on $V'$ to $K$ is defined as a bijective function
$\mu:V\rightarrow V'$ satisfying that 
$N=\{f\circ \mu : f\in N'\}$
where $f\circ\mu$ is a chain on $V$ to $K$ such that
$(f\circ \mu)(x)=f(\mu(x))$ for all $x\in V$.
We require both $N$ and $N'$ have the same type of bilinear forms on
$K$, that is either skew-symmetric or symmetric.
A chain-group $N$ on $V$ to $K$ 
is \emph{simply isomorphic} to another chain-group
$N'$ on $V'$ to $K$ 
if there is a simple isomorphism from $N$ to $N'$. 
\begin{REM}
  Bouchet's definition \cite{Bouchet1988b} of isotropic chain-groups
  is slightly more general than ours, since he allows
  $\form{\tbinom{a}{b},\tbinom{c}{d}}_K=-ad\pm bc$. His
  notation, however, is different; he uses $\F^{V'}$ instead of $K^V$
  where 
  $V'$ is a union of $V$ and its disjoint copy $V^\sim$. Since $K=\F^2$,
  two definitions are equivalent. Our notation has advantages  which we
  will see in the next subsection. Bouchet's notation
  also  has its own virtues because, in Bouchet's sense, 
  isotropic chain-groups are Tutte chain-groups.
  Strictly speaking, our 
  isotropic chain-groups are not Tutte chain-groups, because 
  we define chains differently.
  We are mainly interested in Lagrangian chain-groups because they are closely
  related to representable delta-matroids. We note that the notion of
  Lagrangian chain-groups is motivated by Tutte's chain-groups and
  Bouchet's isotropic systems~\cite{Bouchet1987a}.
\end{REM}

\subsection{Minors}
Consider a subset $T$ of $V$. If $f$ is a chain on $V$ to $K$, 
we define its \emph{restriction} $f\cdot T$ to $T$ as 
the chain on $T$ such that 
$(f\cdot T)(x)=f(x)$ for all $x\in T$.
For a chain-group $N$ on $V$, 
\[N\cdot T=\{f\cdot T: f\in N\}\]
is a chain-group on $T$ to $K$. We note that
$N\cdot T$  is not necessarily isotropic, even if $N$ is isotropic.
We write \[N\times T=\{f\cdot T: f\in N,~f(x)=0 \text{ for all }x\in
V\setminus T\}.\]

For a chain-group $N$ on $V$, we define 
\[N\delete T=\{ f\cdot (V\setminus T): f\in N, \form{f(x),\tbinom 1
  0}_K=0 \text{ for all } x\in T\}.\]
We call this the \emph{deletion}.
Similarly we define 
\[N\contract T=\{ f\cdot (V\setminus T): f\in N, \form{f(x),\tbinom 0
  1}_K=0 \text{ for all } x\in T\}.\]
We call this the \emph{contraction}.
We refer to a chain-group of the form 
$N\contract X\delete Y$ on $V\setminus (X\cup Y)$ 
as a \emph{minor} of $N$.

\begin{PROP}
  A minor of a minor of a  chain-group $N$ on $V$ to $K$ is a minor of $N$.
\end{PROP}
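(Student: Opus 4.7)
The plan is to verify by direct unfolding of the definitions that iterated minors combine into a single minor with the obvious ground-set bookkeeping. Write the outer minor as $N' = N\contract X_1\delete Y_1$ on the ground set $V' = V\setminus(X_1\cup Y_1)$ with $X_1\cap Y_1=\emptyset$, and the minor of the minor as $N'' = N'\contract X_2\delete Y_2$ with $X_2,Y_2\subseteq V'$ disjoint. Then $X_1,Y_1,X_2,Y_2$ are pairwise disjoint, so the natural candidate
\[
N'' \;=\; N\contract(X_1\cup X_2)\delete(Y_1\cup Y_2)
\]
is a well-defined minor of $N$, and this is what I would aim to show.

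The argument is essentially a single calculation. By definition, a chain in $N''$ is the restriction $g\cdot(V'\setminus(X_2\cup Y_2))$ of some $g\in N'$ satisfying $\form{g(x),\tbinom 0 1}_K=0$ for $x\in X_2$ and $\form{g(x),\tbinom 1 0}_K=0$ for $x\in Y_2$; in turn $g = f\cdot V'$ for some $f\in N$ satisfying $\form{f(x),\tbinom 0 1}_K=0$ for $x\in X_1$ and $\form{f(x),\tbinom 1 0}_K=0$ for $x\in Y_1$. Since restriction is transitive, $N''$ consists exactly of those restrictions $f\cdot(V\setminus(X_1\cup X_2\cup Y_1\cup Y_2))$ with $f\in N$ subject to the $\tbinom 0 1$-orthogonality condition on all of $X_1\cup X_2$ and the $\tbinom 1 0$-orthogonality condition on all of $Y_1\cup Y_2$, which is precisely $N\contract(X_1\cup X_2)\delete(Y_1\cup Y_2)$.

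There is no real obstacle here; the key observation is simply that $\contract$ and $\delete$ impose pointwise conditions at distinct indices, so they commute on disjoint sets and iterate by union. The only thing worth being careful about is making sure that in the definition of $N''$ one unfolds $g$ back to an $f\in N$ before taking the final restriction, so that the two composed restriction maps combine cleanly into the single restriction to $V\setminus(X_1\cup X_2\cup Y_1\cup Y_2)$.
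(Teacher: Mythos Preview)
Your proof is correct and follows essentially the same approach as the paper. The paper records the three identities $N\contract X\contract Y = N\contract(X\cup Y)$, $N\contract X\delete Y = N\delete Y\contract X$, and $N\delete X\delete Y = N\delete(X\cup Y)$ and leaves the deduction implicit; you carry out the same direct unfolding of definitions in one combined step to reach $N'' = N\contract(X_1\cup X_2)\delete(Y_1\cup Y_2)$.
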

\begin{proof}
  We can deduce this from the following easy facts.
  \begin{align*}
    N\contract X\contract Y &= N\contract (X\cup Y),\\
    N\contract X\delete Y &=N\delete Y\contract X,\\
    N\delete X\delete Y&=N\delete (X\cup Y). \qedhere
  \end{align*}
\end{proof}
\begin{LEM}\label{lem:basic}
  Let $x,y\in K$.
  If $x\in K$ is isotropic, $x\neq 0$,  and $\form{x,y}_K=0$, then 
  $y=cx$ for some $c\in \F$.
\end{LEM}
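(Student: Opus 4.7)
The plan is to treat $\langle x, \cdot \rangle_K$ as a linear functional on $K$ and deduce the conclusion from a dimension count, using that both candidate bilinear forms on the two-dimensional space $K = \F^2$ are non-degenerate.

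First I would set $\phi_x : K \to \F$ by $\phi_x(v) = \langle x, v\rangle_K$. Because $\langle \cdot,\cdot\rangle_K$ is bilinear, $\phi_x$ is $\F$-linear. Next I would verify non-degeneracy of $\langle \cdot,\cdot\rangle_K$ directly in coordinates: for $b^+$ one has $b^+\bigl(\tbinom{a}{b},\tbinom{0}{1}\bigr) = a$ and $b^+\bigl(\tbinom{a}{b},\tbinom{1}{0}\bigr) = b$, and for $b^-$ one has $b^-\bigl(\tbinom{a}{b},\tbinom{0}{1}\bigr) = a$ and $b^-\bigl(\tbinom{a}{b},\tbinom{1}{0}\bigr) = -b$; so if $x \neq 0$, the functional $\phi_x$ is nonzero, independently of the characteristic of $\F$.

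Then, since $\phi_x \neq 0$ and $\dim K = 2$, the kernel of $\phi_x$ has dimension $1$. The isotropy hypothesis $\langle x, x\rangle_K = 0$ says $x \in \ker \phi_x$, and because $x\neq 0$ the one-dimensional subspace $\ker\phi_x$ must equal $\F x$. Finally, the hypothesis $\langle x, y\rangle_K = 0$ means $y \in \ker\phi_x = \F x$, so $y = cx$ for some $c \in \F$, as required.

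There is no real obstacle here; the only mild subtlety is to confirm that non-degeneracy of $\langle \cdot,\cdot\rangle_K$ holds uniformly for both $b^+$ and $b^-$ and in every characteristic (in particular characteristic $2$, where $b^+$ collapses to an alternating form but remains non-degenerate on $\F^2$). If one preferred a fully hands-on argument, one could instead split into cases on the form and on whether the first coordinate of $x$ vanishes, and solve the resulting scalar equation for $c$; but the one-line kernel argument above is cleaner and makes the role of isotropy and of $\dim K = 2$ transparent.
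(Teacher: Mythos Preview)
Your proof is correct and is essentially the same argument as the paper's, just packaged in the language of linear functionals: the paper uses non-degeneracy to pick $x'$ with $\form{x,x'}_K\neq 0$, writes $y=cx+dx'$, and uses isotropy of $x$ to force $d=0$, which is exactly your kernel/dimension count viewed through an explicit basis.
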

\begin{proof}
  Since $\form{\,,\,}_K$ is nondegenerate, there exists a vector
  $x'\in K$ such that $\form{x,x'}_K\neq0$.
  Hence $\{x,x'\}$ is a basis of $K$.
  Let $y=cx+dx'$ for some $c,d\in \F$. 
  Since $\form{x,cx+dx'}_K=d\form{x,x'}_K=0$, we deduce $d=0$.
\end{proof}
\begin{PROP}\label{prop:minorisotropic}
  A minor of an isotropic chain-group on $V$ to $K$ is isotropic.
\end{PROP}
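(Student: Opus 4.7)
The plan is to reduce to the two primitive cases of a single-element deletion $N\delete\{x\}$ and a single-element contraction $N\contract\{x\}$. The previous proposition shows that minors compose, so iterating these atomic operations will cover arbitrary $N\contract X\delete Y$ once each is known to preserve isotropy.

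For the deletion case, let $f,g\in N$ satisfy $\form{f(x),\tbinom{1}{0}}_K = \form{g(x),\tbinom{1}{0}}_K = 0$, which is the condition defining membership of their restrictions in $N\delete\{x\}$. The key observation is that $\tbinom{1}{0}$ is nonzero and isotropic under both $b^+$ and $b^-$, so Lemma~\ref{lem:basic} yields $f(x)=c_f\tbinom{1}{0}$ and $g(x)=c_g\tbinom{1}{0}$ for scalars $c_f,c_g\in\F$. Consequently
\[
\form{f(x),g(x)}_K \;=\; c_f c_g\form{\tbinom{1}{0},\tbinom{1}{0}}_K \;=\; 0.
\]
Combining this with the identity $\form{f\cdot (V\setminus\{x\}),\,g\cdot (V\setminus\{x\})} \;=\; \form{f,g}-\form{f(x),g(x)}_K$, together with the isotropy hypothesis $\form{f,g}=0$, shows that any two elements of $N\delete\{x\}$ are orthogonal.

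The contraction case is handled by the symmetric argument with $\tbinom{1}{0}$ replaced by $\tbinom{0}{1}$, which is again nonzero and isotropic under either choice of bilinear form. I do not anticipate any serious obstacle; the only point deserving attention is recognizing that the orthogonality condition appearing in the definitions of $\delete$ and $\contract$ is exactly the hypothesis needed to invoke Lemma~\ref{lem:basic} and force $f(x)$, $g(x)$ to lie on an isotropic line of $K$, so that the contribution at the removed coordinate vanishes.
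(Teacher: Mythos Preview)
Your proof is correct and follows essentially the same approach as the paper: both use Lemma~\ref{lem:basic} to show that the orthogonality conditions defining $\delete$ and $\contract$ force the $K$-values at the removed coordinates to lie on an isotropic line, so their contribution to the bilinear form vanishes. The paper's version is terser (it handles all removed coordinates at once rather than reducing to a single element), but the key idea is identical.
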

\begin{proof}
 By Lemma~\ref{lem:basic}, if
  $\form{x,\tbinom10}_K=\form{y,\tbinom10}_K=0$, then $\form{x,y}_K=0$
  and similarly if
  $\form{x,\tbinom01}_K=\form{y,\tbinom01}_K=0$, then
  $\form{x,y}_K=0$.
  This easily implies the lemma.
\end{proof}

We will  prove that every minor of a
Lagrangian chain-group is Lagrangian in the next section.

\subsection{Algebraic duality}
For an element $v$ of a finite set $V$,
if $N$ is a chain-group on $V$ to $K$ and $B$ is a basis of $N$, then
we may assume that the coefficient at $v$ of every chain in $B$ 
is zero except at most two chains in $B$ because $\dim(K)=2$.
So, it is clear that dimensions of  $N\times (V\setminus\{v\})$,
$N\cdot (V\setminus\{v\})$, $N\delete \{v\}$, and $N\contract \{v\}$ 
are at least $\dim(N)-2$. In this subsection, we discuss conditions
for those chain-groups to have dimension $\dim(N)-2$, $\dim(N)-1$, or
$\dim(N)$. Note that we do not assume that $N$ is isotropic.
\begin{THM}\label{thm:algdual}
  If $N$ is  a chain-group on $V$ to $K$ and $X\subseteq V$,
  then 
  \[ (N\cdot X)^\bot = N^\bot \times X . \]
\end{THM}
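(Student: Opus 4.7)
The plan is to prove the equality by two inclusions, both of which amount to bookkeeping with the zero-extension of a chain on $X$ to a chain on all of $V$. The key observation that makes both directions work is that the bilinear form $\langle\,,\,\rangle$ on $K^V$ is a sum over coordinates, so setting a chain to be zero on $V\setminus X$ kills exactly those terms that are missing when one pairs chains on $X$.

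For the inclusion $N^\bot \times X \subseteq (N\cdot X)^\bot$, I would take an arbitrary $g\in N^\bot\times X$. By definition $g = h\cdot X$ for some $h\in N^\bot$ with $h(x)=0$ for all $x\in V\setminus X$. Given any $f'\in N\cdot X$, write $f' = f\cdot X$ for some $f\in N$. Then
\[
\langle f', g\rangle = \sum_{x\in X}\langle f(x),h(x)\rangle_K = \sum_{x\in V}\langle f(x),h(x)\rangle_K = \langle f,h\rangle = 0,
\]
where the middle equality uses $h(x)=0$ for $x\notin X$. Hence $g\in(N\cdot X)^\bot$.

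For the reverse inclusion, I would take $g\in(N\cdot X)^\bot$, a chain on $X$, and extend it to a chain $\tilde g$ on $V$ by declaring $\tilde g(x)=g(x)$ for $x\in X$ and $\tilde g(x)=0$ for $x\in V\setminus X$. For any $f\in N$,
\[
\langle f,\tilde g\rangle = \sum_{x\in V}\langle f(x),\tilde g(x)\rangle_K = \sum_{x\in X}\langle f(x),g(x)\rangle_K = \langle f\cdot X, g\rangle = 0,
\]
so $\tilde g\in N^\bot$. Since $\tilde g$ vanishes off $X$ and $\tilde g\cdot X = g$, the definition of $N^\bot\times X$ yields $g\in N^\bot\times X$.

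There is no substantive obstacle here; the proof is pure unwinding of the definitions of restriction, $N\times X$, and orthogonality, together with the trivial zero-extension trick. The only thing to be careful about is keeping straight which object lives on $V$ and which lives on $X$, and making sure the nondegeneracy or isotropy of $\langle\,,\,\rangle_K$ is not silently invoked — it is not needed for this statement.
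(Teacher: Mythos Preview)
Your proof is correct and essentially identical to the paper's own argument: both directions are handled by zero-extending a chain on $X$ to a chain on $V$ and using that the bilinear form is a coordinate sum, exactly as you do. The paper merely reverses the order of the two inclusions and attributes the argument to Tutte.
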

\begin{proof}
  (Tutte {\cite[Theorem VIII.7.]{Tutte1984}})
  Let $f\in (N\cdot X)^\bot$.
  There exists a chain $f_1$  on $V$ to $K$ such that $f_1\cdot X=f$ and
    $f_1(v)=0$ for all $v\in V\setminus X$.
    Since $\form{f_1,g}=\form{f,g\cdot X}=0$ for all $g\in N$,
    we have $f\in N^\bot \times X$.

    Conversely, if $f\in N^\bot \times X$, it is the restriction
    to $X$ of a chain $f_1$ of $N^\bot$ specified as above. Hence
    $\form{f,g\cdot X}=\form{f_1,g}=0$ for all $g\in N$. Therefore $f\in (N\cdot X)^\bot$.
\end{proof}
\begin{LEM}\label{lem:linalg}
  Let $N$ be a chain-group on $V$ to $K$.
  If $X\cup Y=V$ and $X\cap Y=\emptyset$, then 
  \[
  \dim(N\cdot X)+\dim(N\times Y)=\dim(N).
  \]
\end{LEM}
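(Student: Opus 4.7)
The plan is to apply rank–nullity to the natural restriction map. Define the linear map
\[
\phi: N \to K^X, \qquad \phi(f) = f\cdot X.
\]
Since $N\cdot X$ is by definition the image of $\phi$, we have $\operatorname{im}(\phi) = N\cdot X$, so $\phi$ is a surjection from $N$ onto $N\cdot X$.

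Next I would identify the kernel of $\phi$ with $N\times Y$. An element $f\in N$ lies in $\ker\phi$ precisely when $f(x)=0$ for every $x\in X = V\setminus Y$. The map sending such an $f$ to its restriction $f\cdot Y$ is therefore well-defined, and its image is exactly $N\times Y$ by the definition of $N\times Y$. This map is injective, because $f$ already vanishes on $X$, so $f$ is determined by $f\cdot Y$. Hence $\ker\phi \cong N\times Y$ as vector spaces, giving $\dim(\ker\phi) = \dim(N\times Y)$.

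The conclusion now follows from the rank–nullity theorem applied to $\phi$:
\[
\dim(N) = \dim(\operatorname{im}\phi) + \dim(\ker\phi) = \dim(N\cdot X) + \dim(N\times Y).
\]
No step here is a genuine obstacle; the whole content of the lemma is the observation that restriction to $X$ is linear with the prescribed image and kernel. The only thing worth double-checking is the identification $\ker\phi \cong N\times Y$, which relies on nothing more than the fact that the partition $V = X\sqcup Y$ lets us glue back a chain on $Y$ to a chain on $V$ by extending by $0$ on $X$.
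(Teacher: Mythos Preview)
Your proof is correct and follows essentially the same approach as the paper: both define the restriction map $\varphi(f)=f\cdot X$ on $N$, identify its image as $N\cdot X$ and its kernel as (isomorphic to) $N\times Y$, and then apply rank--nullity. The only cosmetic difference is that the paper takes the codomain of $\varphi$ to be $N\cdot X$ directly (so surjectivity is immediate), whereas you take the codomain to be $K^X$ and note that the image equals $N\cdot X$.
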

\begin{proof}
  Let $\varphi:N\rightarrow N\cdot X$ be a linear transformation 
  defined by $\varphi(f)=f\cdot X$.
  The kernel $\ker(\varphi)$ of this transformation
  is the set of all chains $f$ in $N$ having $f\cdot X=0$.
  Thus, $\dim(\ker(\varphi))=\dim(N\times Y)$. Since $\varphi$ is
  surjective, we deduce that 
  $\dim(N\cdot X)=\dim(N)-\dim(N\times Y)$.
\end{proof}
For $v\in V$,
let $\dup v$, $\ddown v$ be chains on $V$ to $K$ such that
\begin{align*}
\dup v(v)&=\tbinom{1}{0},\quad
\ddown v(v)=\tbinom{0}{1},\\
\dup v(w)&=\ddown v(w)=0 \quad\text{for all }w\in V\setminus \{v\}.
\end{align*}

\begin{PROP}\label{prop:minordim}
  Let $N$ be a chain-group on $V$ to $K$ and $v\in V$. Then
  \begin{align*}
  \dim(N\delete \{v\})&=
  \begin{cases}
    \dim N & \text{if } \dup v\notin N,\dup v\in N^\bot,\\
    \dim N-2 &\text{if }\dup v\in N,\dup v\notin N^\bot,\\
    \dim N-1 &\text{otherwise,}
  \end{cases}\\
 \dim(N\contract \{v\})&=
  \begin{cases}
    \dim N & \text{if } \ddown v\notin N,\ddown v\in N^\bot,\\
    \dim N-2 &\text{if }\ddown v\in N,\ddown v\notin N^\bot,\\
    \dim N-1 &\text{otherwise.}
  \end{cases}
  \end{align*}
\end{PROP}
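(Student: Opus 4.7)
The plan is to compute $\dim(N\delete\{v\})$ by factoring the map $f\mapsto f\cdot(V\setminus\{v\})$ into a selection of chains followed by a restriction, and then compute the kernels and images of each.

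First I would observe that for any $f\in N$ and for either choice of bilinear form on $K$, the condition $\form{f(v),\tbinom10}_K=0$ is equivalent to $f(v)\in\F\tbinom10$, and also equivalent to $\form{\dup v,f}=0$ (since $\form{\dup v,f}=\form{\tbinom10,f(v)}_K$, which agrees with $\form{f(v),\tbinom10}_K$ up to sign in both the $b^+$ and $b^-$ cases). So set
\[
N_1=\{f\in N:\form{\dup v,f}=0\}.
\]
Then $N\delete\{v\}$ is exactly the image of the restriction map $\rho\colon N_1\to K^{V\setminus\{v\}}$ defined by $\rho(f)=f\cdot(V\setminus\{v\})$.

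Next I would apply rank-nullity twice. The map $f\mapsto \form{\dup v,f}$ is a linear functional $N\to\F$, which is identically zero if and only if $\dup v\in N^\bot$. Thus
\[
\dim N_1=\begin{cases}\dim N,&\dup v\in N^\bot,\\ \dim N-1,&\dup v\notin N^\bot.\end{cases}
\]
The kernel of $\rho$ consists of chains in $N_1$ vanishing on $V\setminus\{v\}$; these are precisely the scalar multiples of $\dup v$ that lie in $N$, so $\ker\rho$ is $\F\dup v$ if $\dup v\in N$ and $\{0\}$ otherwise. Combining, $\dim(N\delete\{v\})=\dim N_1-[\dup v\in N]$, and splitting into the four truth-value combinations collapses into the three cases of the proposition: $\dim N$ when $\dup v\notin N$ and $\dup v\in N^\bot$; $\dim N-2$ when $\dup v\in N$ and $\dup v\notin N^\bot$; and $\dim N-1$ in the two remaining cases.

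The formula for $\dim(N\contract\{v\})$ is proved identically with $\tbinom01$ in place of $\tbinom10$ and $\ddown v$ in place of $\dup v$; the only thing to check is again the bilinear-form computation $\form{\ddown v,f}=\form{\tbinom01,f(v)}_K$, which is proportional to the condition $\form{f(v),\tbinom01}_K=0$ used to define contraction. There is no substantial obstacle here: the argument is a direct rank-nullity calculation, and the only place one must be mildly careful is verifying the equivalence of the defining condition of $N\delete\{v\}$ (respectively $N\contract\{v\}$) with the orthogonality condition $\form{\dup v,f}=0$ (respectively $\form{\ddown v,f}=0$) uniformly for both the symmetric and skew-symmetric choices of $\form{\,,\,}_K$.
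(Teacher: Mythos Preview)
Your proof is correct and follows essentially the same approach as the paper: both introduce the subspace $N_1=N'=\{f\in N:\form{f(v),\tbinom10}_K=0\}$, compute its codimension in $N$ via whether $\dup v\in N^\bot$, and then pass to the restriction $N'\cdot(V\setminus\{v\})$ by subtracting the dimension of the kernel, which is $1$ or $0$ according to whether $\dup v\in N$. Your phrasing of the first step as the kernel of the linear functional $f\mapsto\form{\dup v,f}$ is a cosmetic but pleasant touch; the paper argues the same codimension directly and invokes Lemma~\ref{lem:linalg} (which is exactly rank--nullity for the restriction map) for the second step.
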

\begin{proof}
  By symmetry, it is enough to show for $\dim(N\delete \{v\})$.
  Let $N'=\{f\in N: \form{f(v),\tbinom 1 0}_K=0\}$.
  By definition, $N\delete \{v\}=N'\cdot (V\setminus \{v\})$.

  Observe that $N'=N$ if and only if $\dup v\in N^\bot$.
  If $N'\neq N$, then there is a chain $g$ in $N$ 
  such that $\form{g(v),\tbinom10}_K\neq 0$. 
  Then, for every chain $f\in N$, there exists $c\in \F$ such
  that 
  $f-cg\in N'$. Therefore $\dim(N')=\dim N-1$ if $\dup v\notin N^\bot$
  and $\dim (N')=\dim N$ if $\dup v\in N^\bot$.

  By Lemma~\ref{lem:linalg}, 
  $\dim(N'\cdot (V\setminus\{v\}))=\dim N'-\dim (N'\times\{v\})$. 
  Clearly, $\dim (N'\times\{v\})=0$ if $\dup v \notin N$
  and $\dim (N'\times\{v\})=1$ if $\dup v\in N$.
  This concludes the proof.
  %
\end{proof}
\begin{COR}\label{cor:minordim}
  If $N$ is an isotropic chain-group on $V$ to $K$
  and $M$ is a minor of $N$ on $V'$, 
  then 
  \[|V'|-\dim M \le |V|-\dim N.\]
\end{COR}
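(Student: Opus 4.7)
The plan is to reduce to a single-element minor and then apply Proposition~\ref{prop:minordim}, using isotropy to rule out the ``bad'' case where dimension drops by $2$.

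First, by iterating the identities $N\contract X\contract Y = N\contract(X\cup Y)$, $N\delete X\delete Y = N\delete(X\cup Y)$, and $N\contract X\delete Y = N\delete Y\contract X$, any minor can be obtained by a sequence of one-element deletions and contractions. Proposition~\ref{prop:minorisotropic} guarantees that each intermediate chain-group in this sequence is again isotropic. So it suffices, by induction on $|V\setminus V'|$, to prove that for any $v\in V$ and any isotropic chain-group $N$ on $V$,
\[
\dim(N\delete\{v\})\ge \dim N - 1 \quad\text{and}\quad \dim(N\contract\{v\})\ge \dim N - 1.
\]

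By symmetry between $\dup v$ and $\ddown v$, I will argue only the deletion case. Proposition~\ref{prop:minordim} states that $\dim(N\delete\{v\})\in\{\dim N,\ \dim N-1,\ \dim N-2\}$, and the value $\dim N - 2$ occurs exactly when $\dup v\in N$ and $\dup v\notin N^\bot$. The key observation is that isotropy of $N$ forces $N\subseteq N^\bot$: indeed, $\form{f,g}=0$ for every $f,g\in N$ is literally the statement that every element of $N$ lies in $N^\bot$. Hence $\dup v\in N$ already implies $\dup v\in N^\bot$, so the $\dim N - 2$ case is vacuous for isotropic $N$. This yields the required one-step inequality, completing the induction.

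There is no real obstacle here; the content of the corollary is simply that the symmetric exchange-like gap $|V|-\dim N$ is monotone under minors, and this monotonicity is exactly what the isotropy hypothesis buys us relative to the trichotomy in Proposition~\ref{prop:minordim}. The only subtlety to be careful about is maintaining the isotropy hypothesis at each step of the induction, which is precisely the role of Proposition~\ref{prop:minorisotropic}.
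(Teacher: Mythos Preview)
Your proof is correct and follows essentially the same approach as the paper's: induct on $|V\setminus V'|$, use Proposition~\ref{prop:minorisotropic} to keep the intermediate chain-groups isotropic, and then invoke Proposition~\ref{prop:minordim} together with $N\subseteq N^\bot$ to exclude the $\dim N-2$ case. The paper compresses your key observation into the single line ``$\dup v\notin N\setminus N^\bot$ and $\ddown v\notin N\setminus N^\bot$'', but the content is identical.
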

\begin{proof}
  We proceed by induction on $|V\setminus V'|$.
  Since $N$ is isotropic, every minor of $N$ is isotropic by
  Proposition~\ref{prop:minorisotropic}.
 Since $\dup v\notin N\setminus N^\bot$ and $\ddown v\notin N\setminus
  N^\bot$,
  $\dim(N)-\dim(N\delete \{v\})\in \{0,1\}$ and
  $\dim(N)-\dim(N\contract \{v\})\in \{0,1\}$. 
  So $|V\setminus\{v\}|-\dim (N\delete\{v\})\le |V|-\dim N$
  and  $|V\setminus\{v\}|-\dim (N\contract\{v\})\le |V|-\dim N$.
  Since $M$ is a minor of either $N\delete\{v\}$ or $N\contract
  \{v\}$,
  $|V'|-\dim M\le |V|-\dim N$
  by the induction hypothesis.
\end{proof}
\begin{PROP}
  A minor of a Lagrangian chain-group is Lagrangian.
\end{PROP}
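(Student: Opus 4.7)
The plan is to combine two earlier results: Proposition~\ref{prop:minorisotropic} for the isotropic property, and Corollary~\ref{cor:minordim} together with the standard linear-algebra bound on totally isotropic subspaces for the dimension count.

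Let $N$ be a Lagrangian chain-group on $V$ to $K$, so $N$ is isotropic with $\dim N = |V|$, and let $M = N\contract X\delete Y$ be a minor of $N$ on $V' = V\setminus(X\cup Y)$. First, Proposition~\ref{prop:minorisotropic} immediately gives that $M$ is isotropic, so the only remaining task is to verify $\dim M = |V'|$.

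For the lower bound, I would apply Corollary~\ref{cor:minordim}: since $|V|-\dim N = 0$, the corollary yields $|V'|-\dim M \le 0$, i.e., $\dim M \ge |V'|$. For the upper bound, I would invoke the linear-algebra fact recalled at the beginning of Section~\ref{sec:lagrangian}. The bilinear form $\form{\,,\,}_K$ on $K = \F^2$ is non-degenerate in both the $b^+$ and $b^-$ cases (a quick check pairing with $\tbinom{1}{0}$ and $\tbinom{0}{1}$), and since $\form{\,,\,}$ on $K^{V'}$ is the orthogonal sum of copies of $\form{\,,\,}_K$ indexed by $V'$, it is also non-degenerate. Because $K^{V'}$ has dimension $2|V'|$, any totally isotropic subspace has dimension at most $|V'|$; applied to $M$, this gives $\dim M \le |V'|$.

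Combining the two bounds yields $\dim M = |V'|$, so $M$ is Lagrangian. There is no real obstacle here: all the substantive work has been carried out by the preceding propositions, and the proof reduces to assembling them.
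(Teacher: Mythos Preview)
Your proof is correct and matches the paper's argument essentially line for line: the paper also invokes Proposition~\ref{prop:minorisotropic} for isotropy (and hence $\dim M\le |V'|$) and Corollary~\ref{cor:minordim} for $\dim M\ge |V'|$. The only difference is that the paper takes the upper bound $\dim M\le |V'|$ as immediate from isotropy, whereas you spell out the non-degeneracy justification explicitly.
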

\begin{proof}
  Let $N$ be a Lagrangian chain-group on $V$ to $K$
  and $N'$ be its   minor on $V'$ to $K$.
  By Proposition~\ref{prop:minorisotropic}, $N'$ is  isotropic
  and therefore $\dim(N')\le |V'|$.
  Thus it is enough to show that $\dim(N')\ge |V'|$.
  Since $\dim(N)=|V|$, it follows that $\dim(N')\ge |V'|$ by Corollary~\ref{cor:minordim}.
\end{proof}

\begin{THM}\label{thm:delcondual}
  If $N$ is a chain-group on $V$ to $K$ and $X\subseteq V$, then 
  \[
  (N\delete X)^\bot= N^\bot \delete X
  \text{ and }
  (N\contract X)^\bot= N^\bot \contract X.
  \]
\end{THM}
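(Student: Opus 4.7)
The plan is to reduce both identities to Theorem~\ref{thm:algdual} combined with standard duality in a finite-dimensional space carrying a nondegenerate bilinear form. Since $\form{\cdot,\cdot}_K$ is nondegenerate on $K$, the inherited form on $K^V$ is nondegenerate as well, so the identity $(W_1\cap W_2)^\bot = W_1^\bot + W_2^\bot$ is available for arbitrary subspaces $W_1,W_2$ of $K^V$.

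For $X\subseteq V$ I would set $E^{*} = \mathrm{span}\{\dup x : x\in X\}$ and $E_{*} = \mathrm{span}\{\ddown x : x\in X\}$ as subspaces of $K^V$. Because $\form{f,\dup x} = \form{f(x),\tbinom{1}{0}}_K$, the defining condition of $N\delete X$ reads exactly $f\in N\cap (E^{*})^\bot$, so
\[
N\delete X = \bigl(N\cap (E^{*})^\bot\bigr)\cdot (V\setminus X),
\]
and similarly $N\contract X = (N\cap (E_{*})^\bot)\cdot (V\setminus X)$. Applying Theorem~\ref{thm:algdual} to the chain-group $N\cap (E^{*})^\bot$ and the set $V\setminus X$, and then computing $(N\cap (E^{*})^\bot)^\bot = N^\bot + ((E^{*})^\bot)^\bot = N^\bot + E^{*}$, yields
\[
(N\delete X)^\bot = (N^\bot + E^{*})\times (V\setminus X).
\]

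To finish I would identify this last space with $N^\bot\delete X$. Every $g\in E^{*}$ is supported on $X$ with $g(x)\in \F\tbinom{1}{0}$, and by Lemma~\ref{lem:basic} the condition $\form{f(x),\tbinom{1}{0}}_K = 0$ is equivalent to $f(x)\in \F\tbinom{1}{0}$ (since $\tbinom{1}{0}$ is nonzero and isotropic under both $b^+$ and $b^-$). Given a chain $h = (f+g)\cdot (V\setminus X)$ in $(N^\bot + E^{*})\times (V\setminus X)$ with $f\in N^\bot$, $g\in E^{*}$, and $(f+g)(x) = 0$ for $x\in X$, we have $f(x) = -g(x)\in \F\tbinom{1}{0}$ and $h = f\cdot (V\setminus X)$, so $h\in N^\bot\delete X$. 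Conversely, any $h = f\cdot (V\setminus X)\in N^\bot\delete X$ is realized by taking the $g\in E^{*}$ with $g(x) = -f(x)$, which lies in $E^{*}$ precisely because $f(x)\in \F\tbinom{1}{0}$.

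The contraction identity follows from the identical argument with $\dup x$ replaced by $\ddown x$ and $\tbinom{1}{0}$ by $\tbinom{0}{1}$ throughout. I do not anticipate a real obstacle: once deletion and contraction are recognized as restrictions of intersections of $N$ with orthogonal complements of the coordinate subspaces $E^{*}$ and $E_{*}$, the rest is bookkeeping, with the only point worth care being the translation between the scalar condition $\form{f(x),\tbinom{1}{0}}_K = 0$ and the vectorial condition $f(x)\in\F\tbinom{1}{0}$ supplied by Lemma~\ref{lem:basic}.
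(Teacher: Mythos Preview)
Your argument is correct, and it takes a genuinely different route from the paper's. The paper proceeds by induction on $|X|$: for $|X|=1$ it first checks the easy inclusion $N^\bot\delete X\subseteq(N\delete X)^\bot$ and then proves equality by a dimension count, invoking Proposition~\ref{prop:minordim} to tabulate $\dim(N\delete\{v\})-\dim N$ and $\dim(N^\bot\delete\{v\})-\dim N^\bot$ in the three cases and summing. Your approach bypasses both the induction and the case analysis: you recognise $N\delete X$ as $(N\cap (E^{*})^\bot)\cdot(V\setminus X)$, feed this into Theorem~\ref{thm:algdual} in one shot, and use the standard identity $(W_1\cap W_2)^\bot=W_1^\bot+W_2^\bot$ for a nondegenerate form. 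The trade-off is that you rely on that general duality fact (routine in finite dimensions, but not stated in the paper), whereas the paper's version stays entirely within the machinery already developed; on the other hand, your proof handles arbitrary $X$ directly and avoids the three-way casework of Proposition~\ref{prop:minordim}. The only delicate step in your argument---translating $\form{f(x),\tbinom{1}{0}}_K=0$ into $f(x)\in\F\tbinom{1}{0}$ via Lemma~\ref{lem:basic}---is handled correctly (note that the form is symmetric or skew-symmetric, so orthogonality to $\tbinom{1}{0}$ is symmetric in its arguments).
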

\begin{proof}
  By symmetry, it is enough to show  that
  $(N\delete X)^\bot= N^\bot \delete X$.
  By induction, we may assume $|X|=1$. 
  Let $v\in X$.
  
  Let $f$ be a chain in $N^\bot\delete X$. 
  There is a chain $f_1\in N^\bot$ such that
  $f_1\cdot (V\setminus X)=f$ and $\form{f_1(v),\tbinom 1 0}_K=0$.
 Let $g\in N$ be a chain such that 
  $\form{g(v),\tbinom 10}_K=0$.
  Then 
 $\form{f_1(v),g(v)}_K=0$ by Lemma~\ref{lem:basic}.
  Therefore $\form{f,g\cdot (V\setminus X)}=\form{f_1,g}=0$ and so 
  $f\in (N\delete X)^\bot$.
  We conclude that $N^\bot \delete X \subseteq (N\delete X)^\bot$.

  We now claim that $\dim (N^\bot\delete X)= \dim (N\delete
  X)^\bot$.
  We apply Proposition~\ref{prop:minordim} to deduce that 
  \begin{align*}
    \dim(N\delete X)-\dim(N)
    &=
    \begin{cases}
      0 &\text{if }\dup v\notin N,
      \dup  v\in N^\bot,\\
      -2 & \text{if }\dup v\in N,
      \dup  v\notin N^\bot,\\
      -1 & \text{otherwise,}
    \end{cases}\\
  \dim(N^\bot\delete X)-\dim(N^\bot)
  &=
  \begin{cases}
    0 &\text{if }\dup v\notin N^\bot,
    \dup  v\in N,\\
    -2 & \text{if }\dup v\in N^\bot,
    \dup  v\notin N,\\
    -1 & \text{otherwise.}
  \end{cases}
  \end{align*}
  By summing these equations, we obtain the following:
  \[
  \dim(N\delete X)-\dim(N)+\dim(N^\bot\delete X)-\dim(N^\bot)=-2.
  \]
  Since $\dim(N)+\dim(N^\bot)=2|V|$ and 
  $\dim(N\delete X)+\dim(N\delete X)^\bot=2(|V|-1)$, 
  we deduce that $\dim(N^\bot\delete X)=\dim(N\delete X)^\bot$.

  Since $N^\bot\delete X\subseteq (N\delete X)^\bot$ and
  $\dim(N^\bot\delete X)=\dim(N\delete X)^\bot$, we conclude that 
  $N^\bot\delete X=(N\delete X)^\bot$.
\end{proof}

\subsection{Connectivity}
We define the connectivity of a chain-group.
Later it will be shown that this definition is related to the
connectivity function of matroids (Lemma~\ref{lem:matroidconn})
and rank functions of matrices (Theorem~\ref{thm:connectivity}).

Let $N$ be a chain-group on $V$ to $K$. 
If $U$ is a subset of $V$, then we write 
\[
\lambda_N(U)=\frac{\dim N-\dim(N\times(V\setminus U))-\dim(N\times U)}{2}.
\]
This function $\lambda_N$ is called the \emph{connectivity function} of a
chain-group~$N$.
By Lemma~\ref{lem:linalg}, we can rewrite $\lambda_N$ as follows:
\[
\lambda_N(U)=\frac{\dim(N\cdot U)-\dim(N\times U)}{2}.
\]
From Theorem~\ref{thm:algdual}, it is easy to derive that 
$
\lambda_{N^\bot}(U)=\lambda_N (U)$.

In general $\lambda_N(X)$ need not be an integer. But if $N$ is
Lagrangian, 
then $\lambda_N(X)$ is always an integer by the following lemma.
\begin{LEM}\label{lem:even}
  If $N$ is a Lagrangian chain-group on $V$ to $K$, then 
  \[\lambda_N(X)=|X|-\dim(N\times X) \]
  for all $X\subseteq V$.
\end{LEM}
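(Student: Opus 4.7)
The plan is to exploit the fact, essentially built into the definition of \emph{Lagrangian}, that such a chain-group is self-dual: $N=N^\bot$. Indeed, $N$ is isotropic so $N\subseteq N^\bot$, and since $\dim N=|V|$ while $\dim N+\dim N^\bot=2|V|$ (because the bilinear form on $K^V$ is non-degenerate), we get $\dim N^\bot=|V|=\dim N$, hence $N=N^\bot$. This is the only nontrivial input; the rest is dimension counting.

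Starting from the equivalent form of $\lambda_N$ recorded just above the lemma,
\[
\lambda_N(X)=\frac{\dim(N\cdot X)-\dim(N\times X)}{2},
\]
I would compute $\dim(N\cdot X)$ by applying Theorem~\ref{thm:algdual} inside the ambient space $K^X$: the orthogonal complement of $N\cdot X$ in $K^X$ is $N^\bot\times X$. Since the bilinear form on $K^X$ is non-degenerate and $\dim K^X=2|X|$, this yields
\[
\dim(N\cdot X)+\dim(N^\bot\times X)=2|X|.
\]
Now using $N=N^\bot$ gives $\dim(N\cdot X)=2|X|-\dim(N\times X)$. Plugging this back into the formula for $\lambda_N(X)$ produces exactly
\[
\lambda_N(X)=\frac{2|X|-\dim(N\times X)-\dim(N\times X)}{2}=|X|-\dim(N\times X),
\]
as desired.

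The only genuine step here is the self-duality $N=N^\bot$, which could be stated as a one-line preliminary observation before the lemma; once it is in hand the rest is a direct application of Theorem~\ref{thm:algdual} and elementary dimension arithmetic. There is no real obstacle to anticipate, beyond taking care that Theorem~\ref{thm:algdual} is invoked with the correct ambient chain-group $K^X$ (rather than $K^V$) so that the orthogonal complement has the right dimension.
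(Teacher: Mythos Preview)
Your proof is correct and follows essentially the same route as the paper's: start from $2\lambda_N(X)=\dim(N\cdot X)-\dim(N\times X)$, rewrite $\dim(N\cdot X)=2|X|-\dim((N\cdot X)^\bot)=2|X|-\dim(N^\bot\times X)$ via Theorem~\ref{thm:algdual}, and then invoke $N=N^\bot$. The only difference is cosmetic: you spell out the self-duality argument in a preamble, whereas the paper inserts the clause ``since $N=N^\bot$'' inline.
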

\begin{proof}
  From the definition of $\lambda_N(X)$,
  \begin{align*}
    2\lambda_N(X)&=\dim (N\cdot X)-\dim(N\times X)\\
    &=2|X|-\dim (N\cdot X)^\bot -\dim(N\times X)\\
    &=2|X|-\dim (N^\bot\times X)-\dim(N\times X),\\
    \intertext{and since $N=N^\bot$, we have}
    &=2(|X|-\dim(N\times X)).\qedhere
    \end{align*}
\end{proof}
By definition, it is easy to see that
$\lambda_N(U)=\lambda_N(V\setminus U)$.
 Thus $\lambda_N$ is symmetric.
We prove that $\lambda_N$ is submodular.

\begin{LEM}\label{lem:submodular}
  Let  $N$ be a chain-group on $V$ to $K$ and 
  $X$, $Y$ be two subsets of $V$. Then,
  \[
  \dim (N\times (X\cup Y))+\dim (N\times (X\cap Y))\ge 
  \dim (N\times X) +\dim (N\times Y ).
  \]
\end{LEM}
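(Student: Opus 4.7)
The plan is to reduce the inequality to the classical dimension formula $\dim(U+W)+\dim(U\cap W)=\dim(U)+\dim(W)$ applied to suitable subspaces of $N$ itself, rather than to their restrictions.

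First I would identify, for any $T\subseteq V$, the space $N\times T$ with the subspace
\[
N_T=\{f\in N : f(x)=0 \text{ for all }x\in V\setminus T\}
\]
of $N$. The restriction map $f\mapsto f\cdot T$ is linear from $N_T$ to $N\times T$, and by the very definition of $N\times T$ it is surjective; it is injective because the coordinates outside $T$ are already zero. So it is a linear isomorphism, giving $\dim(N\times T)=\dim(N_T)$ for every $T$.

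Next I would observe the two set-theoretic facts about this family of subspaces: $N_{X\cap Y}=N_X\cap N_Y$ (a chain vanishes off $X\cap Y$ iff it vanishes off $X$ and off $Y$), and $N_X+N_Y\subseteq N_{X\cup Y}$ (any sum of a chain supported on $X$ and one supported on $Y$ is supported on $X\cup Y$). Applying the standard dimension formula to $N_X$ and $N_Y$ inside $N$ then yields
\[
\dim(N_X)+\dim(N_Y)=\dim(N_X+N_Y)+\dim(N_X\cap N_Y)\le \dim(N_{X\cup Y})+\dim(N_{X\cap Y}),
\]
and translating back via the isomorphism of the first step gives exactly the claimed inequality.

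There is no real obstacle; the whole argument is one application of the dimension formula together with the identification $N\times T\cong N_T$. The only thing to be careful about is not to confuse $N\times T$ (chains on $T$ obtained by restricting chains of $N$ that vanish outside $T$) with $N\cdot T$ (restrictions of arbitrary chains of $N$), since submodularity for the latter does not follow by this argument and is not what is asserted.
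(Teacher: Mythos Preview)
Your proof is correct and is essentially identical to the paper's own argument: the paper likewise defines $N_T=\{f\in N: f(v)=0\text{ for all }v\notin T\}$, notes that $\dim N_T=\dim(N\times T)$, observes $N_X\cap N_Y=N_{X\cap Y}$ and $N_X+N_Y\subseteq N_{X\cup Y}$, and applies the dimension formula. There is nothing to add.
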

\begin{proof}
  For $T\subseteq V$, let $N_T=\{f\in N:f(v)=0 \text{ for all }v\notin
  T\}$.
  Let $N_X+N_Y=\{f+g:f\in N_X,g\in N_Y\}$.
  We know that 
  $\dim (N_X+N_Y)+\dim(N_X\cap N_Y)=\dim N_X+\dim N_Y$
  from a standard theorem in the linear algebra.
  Since $N_X\cap N_Y=N_{X\cap Y}$ and $N_X+N_Y\subseteq N_{X\cup Y}$,
  we deduce that 
  \[
  \dim N_{X\cup Y}+\dim N_{X\cap Y}\ge \dim N_X+\dim N_Y.
  \]
  Since $\dim N_T=\dim (N\times T)$, we are done. 
\end{proof}
\begin{THM}[Submodular inequality]\label{thm:submodular}
  Let $N$ be a chain-group on $V$ to $K$. Then 
  $\lambda_N$ is submodular; in other words, 
  \[
  \lambda_N(X)+\lambda_N(Y)\ge \lambda_N(X\cup Y)+\lambda_N(X\cap Y)
  \]
  for all $X,Y\subseteq V$.
\end{THM}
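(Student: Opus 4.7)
The plan is a direct calculation: expand both sides using the definition
\[
\lambda_N(U)=\frac{\dim N-\dim(N\times(V\setminus U))-\dim(N\times U)}{2},
\]
and reduce the submodular inequality for $\lambda_N$ to two applications of Lemma~\ref{lem:submodular} on the quantity $\dim(N\times\cdot)$.

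More precisely, I would first write out $2\bigl[\lambda_N(X)+\lambda_N(Y)-\lambda_N(X\cup Y)-\lambda_N(X\cap Y)\bigr]$. The four copies of $\dim N$ cancel, and what remains splits into two groups: one involving $\dim(N\times S)$ for $S\in\{X,Y,X\cup Y,X\cap Y\}$, and another involving $\dim(N\times (V\setminus S))$ for the same four $S$'s. For the first group, Lemma~\ref{lem:submodular} applied to $X$ and $Y$ gives
\[
\dim(N\times(X\cup Y))+\dim(N\times(X\cap Y))\ge\dim(N\times X)+\dim(N\times Y),
\]
which shows the first group is nonnegative.

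For the second group, I use the set-theoretic identities $V\setminus(X\cup Y)=(V\setminus X)\cap(V\setminus Y)$ and $V\setminus(X\cap Y)=(V\setminus X)\cup(V\setminus Y)$. Setting $X'=V\setminus X$ and $Y'=V\setminus Y$ and applying Lemma~\ref{lem:submodular} to $X',Y'$ yields
\[
\dim(N\times(X'\cup Y'))+\dim(N\times(X'\cap Y'))\ge\dim(N\times X')+\dim(N\times Y'),
\]
which shows the second group is also nonnegative. Summing the two inequalities and dividing by $2$ gives the submodular inequality for $\lambda_N$.

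There is no real obstacle here; the entire argument is a bookkeeping exercise in symmetrizing the already-established submodularity of $\dim(N\times\cdot)$. The only subtle point is remembering to apply Lemma~\ref{lem:submodular} a \emph{second} time to the complements $V\setminus X$ and $V\setminus Y$, in order to handle the $\dim(N\times(V\setminus U))$ contributions in the definition of $\lambda_N$.
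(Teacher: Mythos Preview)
Your proposal is correct and follows essentially the same route as the paper: expand via the definition of $\lambda_N$, then apply Lemma~\ref{lem:submodular} once to the pair $(X,Y)$ and once to the complements $(V\setminus X,\,V\setminus Y)$, using De~Morgan's laws to match the terms. The paper presents it as a single chain of inequalities (with $S=V\setminus X$, $T=V\setminus Y$) rather than as a difference shown nonnegative, but the content is identical.
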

\begin{proof}
  We use Lemma~\ref{lem:submodular}.
  Let $S=V\setminus X$ and $T=V\setminus Y$.
  \begin{align*}
    \lefteqn{2\lambda_N(X)+2\lambda_N(Y)}\\
    &=2\dim(N)\\
    &\quad-(\dim(N\times X)+\dim(N\times S)+\dim(N\times
    Y)+\dim(N\times T))\\
    &\ge 2\dim(N)-\dim(N\times (X\cup Y))-\dim(N\times(X\cap Y))\\
    &\quad
    -\dim(N\times (S\cap Y))-\dim(N\times (S\cup Y))\\
    &=2\lambda_N(X\cup Y)+2\lambda_N(X\cap Y).\qedhere
  \end{align*}
\end{proof}
What happens to the connectivity functions if we take minors of a
chain-group?
As in the matroid theory, the connectivity does not increase.
\begin{THM}\label{thm:connectivityminor}
  Let $N$, $M$ be chain-groups on $V$, $V'$ respectively.
  If $M$ is a minor of a chain-group $N$,
  then $\lambda_M(T)\le \lambda_N(T\cup U)$ for all $T\subseteq V'$
  and all $U\subseteq V\setminus V'$.
\end{THM}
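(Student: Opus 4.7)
The plan is to induct on $|V\setminus V'|$. When $V=V'$, we must have $U=\emptyset$ and $M=N$, so the statement reduces to $\lambda_N(T)\le\lambda_N(T)$. For the inductive step, pick $v\in V\setminus V'$ and factor $M$ through an intermediate minor $M'$ on $V'\cup\{v\}$ with $M=M'\delete\{v\}$ or $M=M'\contract\{v\}$. Given $T\subseteq V'$ and $U\subseteq V\setminus V'$, set $T'=T\cup\{v\}$ if $v\in U$ and $T'=T$ otherwise, and put $U'=U\setminus\{v\}\subseteq V\setminus(V'\cup\{v\})$. By the inductive hypothesis applied to $M'$ as a minor of $N$, $\lambda_{M'}(T')\le\lambda_N(T'\cup U')=\lambda_N(T\cup U)$. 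Thus it suffices to show that for a single-element operation $M=M'\delete\{v\}$ or $M=M'\contract\{v\}$ and every $T\subseteq V'$,
\[
\lambda_M(T)\le\lambda_{M'}(T)\quad\text{and}\quad\lambda_M(T)\le\lambda_{M'}(T\cup\{v\}).
\]

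The duality formula $\lambda_{N^\bot}=\lambda_N$ together with Theorem~\ref{thm:delcondual} reduces the contraction case to the deletion case, so I may assume $M=M'\delete\{v\}$. The first inequality is immediate from the definitions: $M\cdot T\subseteq M'\cdot T$ because deletion simply adds the constraint $\form{h(v),\tbinom 10}_K=0$, and $M'\times T\subseteq M\times T$ because any $h\in M'$ supported in $T\subseteq V'$ has $h(v)=0$ and hence automatically meets that constraint. These two containments give $\lambda_M(T)\le\lambda_{M'}(T)$.

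For the second inequality, set $S=T\cup\{v\}$. A direct unwinding of the definitions yields
\[
M\cdot T=(M'\cdot S)\delete\{v\}\quad\text{and}\quad M\times T=(M'\times S)\delete\{v\}.
\]
Applying Proposition~\ref{prop:minordim} to the chain-groups $M'\cdot S$ and $M'\times S$ on $S$, the dimension loss of $\delete\{v\}$ in each takes a value in $\{0,1,2\}$ determined by whether $\dup v$ lies in the chain-group and whether it lies in the orthogonal. Since $M'\times S\subseteq M'\cdot S$, we have the implications $\dup v\in M'\times S\Rightarrow\dup v\in M'\cdot S$ and $\dup v\in(M'\cdot S)^\bot\Rightarrow\dup v\in(M'\times S)^\bot$. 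A brief case check across the three cases of Proposition~\ref{prop:minordim} then shows that the loss for $M'\cdot S$ always dominates the loss for $M'\times S$, whence $\dim(M\cdot T)-\dim(M\times T)\le\dim(M'\cdot S)-\dim(M'\times S)$, which is the required inequality.

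The main obstacle is the second inequality. All its content lies in the case analysis of Proposition~\ref{prop:minordim}: the asymmetric interplay between membership in $M'\cdot S$ versus $M'\times S$ and in their orthogonals is exactly what forces the loss comparison to go the right way. The other pieces of the argument---the reduction to a single-step minor by induction, the duality reduction from contraction to deletion, and the first inequality---are routine bookkeeping.
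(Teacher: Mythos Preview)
Your argument is correct in substance, but it takes a longer route than the paper for the second inequality and contains one misstatement in the reduction step.

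On the reduction from contraction to deletion: Theorem~\ref{thm:delcondual} says $(N\delete X)^\bot=N^\bot\delete X$ and $(N\contract X)^\bot=N^\bot\contract X$, so duality does \emph{not} swap the two operations; combined with $\lambda_{N^\bot}=\lambda_N$ it only tells you that proving the inequality for $M'$ is equivalent to proving it for $(M')^\bot$, with the same operation. The actual symmetry that reduces contraction to deletion is the coordinate swap $\tbinom10\leftrightarrow\tbinom01$ in $K$, which interchanges $\delete$ and $\contract$ while leaving $\lambda$ unchanged. This is what the paper means by ``by symmetry we may assume $M=N\delete\{v\}$.''

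On the second inequality $\lambda_M(T)\le\lambda_{M'}(T\cup\{v\})$: your identities $M\cdot T=(M'\cdot S)\delete\{v\}$ and $M\times T=(M'\times S)\delete\{v\}$ are correct, and the case analysis via Proposition~\ref{prop:minordim} goes through. However, the paper obtains this inequality in one line from the first: since $\lambda_{M}$ and $\lambda_{M'}$ are symmetric,
\[
\lambda_M(T)=\lambda_M(V'\setminus T)\le\lambda_{M'}(V'\setminus T)=\lambda_{M'}\bigl((V'\cup\{v\})\setminus T\bigr)=\lambda_{M'}(T\cup\{v\}).
\]
This avoids the entire case analysis. Your approach has the virtue of being more explicit about where the dimension drops occur, but the paper's symmetry trick is the cleaner way to close the argument.
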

\begin{proof}
  By induction on $|V\setminus V'|$, 
  it is enough to prove this when $|V\setminus V'|=1$.
  Let $v\in V\setminus V'$.
  By symmetry we may assume that $M=N\delete \{v\}$.

  We claim that 
  $\lambda_M(T)\le \lambda_N(T)$.
  From the definition, we deduce 
  \begin{align*}
    {2\lambda_M(T)-2\lambda_N (T)}
    &= \dim(N\delete \{v\}\cdot T)
    -\dim(N\delete \{v\}\times T)\\
    &\quad -\dim(N\cdot T)+\dim(N\times T).
 \end{align*}
  Clearly $N\delete \{v\} \cdot T\subseteq N\cdot T$
  and $N\times T\subseteq N\delete\{v\}\times T$.
  Thus $\lambda_M(T)\le \lambda_N(T)$.




  Since $\lambda_N$ and $\lambda_M$ are symmetric, 
  $\lambda_M(T)=\lambda_M(V'\setminus T)\le \lambda_N(V'\setminus
  T)=\lambda_N(T\cup \{v\})$.
\end{proof}

\subsection{Branch-width}
A \emph{branch-decomposition} of a chain-group $N$ on $V$ to $K$
is a pair $(T,\mathcal L)$ of a subcubic tree $T$ and
a bijection $\mathcal L:V\rightarrow \{t: \text{$t$ is a leaf of
  $T$}\}$.
For each edge $e=uv$ of the tree $T$, the connected components of
$T\setminus e$ form a partition $(X_e,Y_e)$ of the leaves of $T$
and we call $\lambda_N (\mathcal L^{-1}(X_e))$ the \emph{width} of~$e$.
The \emph{width} of a branch-decomposition $(T,\mathcal L)$ is the
maximum width of all edges of~$T$.
The \emph{branch-width} $\bw(N)$ of a chain-group $N$ 
is the minimum width of all its branch-decompositions.
(If $|V|\le 1$, then we define that $\bw(N)=0$.)

\section{Matrix Representations of Lagrangian Chain-groups}\label{sec:matrix}
\subsection{Matrix Representations.}
We say that  two  chains $f$ and $g$ on $V$ to $K$ are
\emph{supplementary} if, for all $x\in V$, 
\begin{enumerate}[(i)]
\item $  \form{f(x),f(x)}_K=\form{g(x),g(x)}_K=0 $ and 
\item  $ \form{f(x),g(x)}_K=1 $.
\end{enumerate}
Given a skew-symmetric or symmetric matrix $A$, we may construct a
Lagrangian chain-group as follows.

\begin{PROP}\label{prop:matrix2chain}
  Let $M=(m_{ij}:i,j\in V)$ be a skew-symmetric or symmetric $V\times V$ 
  matrix over a field $\F$.
  Let $a,b$ be supplementary chains on $V$ to $K=\F^2$
  where
  $\form{\,,\,}_K$ is skew-symmetric if $M$ is symmetric and
  symmetric if $M$ is skew-symmetric.

  For $i\in V$, let $f_i$ be a chain on $V$ to $K$ such that
  for all $j\in V$, 
  \[  f_i(j)=
  \begin{cases}
  m_{ij}a(j)+b(j) &\text{if }j=i, \\
  m_{ij}a(j) &\text{if } j\neq i.
  \end{cases}
  \]
  Then the subspace $N$ of $K^V$ spanned by chains $\{f_i:i\in
  V\}$ is 
  a Lagrangian chain-group on $V$ to $K$.
\end{PROP}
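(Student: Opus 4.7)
The goal is to verify the two defining conditions of a Lagrangian chain-group: $\dim(N) = |V|$ and $N$ is totally isotropic. I would handle these separately.

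For the dimension, I would show that the chains $\{f_i : i \in V\}$ are linearly independent, which immediately gives $\dim(N) = |V|$. Supplementarity implies that for each $j \in V$ the vectors $a(j)$ and $b(j)$ are both nonzero, and since $\form{a(j), b(j)}_K = 1$ they are linearly independent and so form a basis of the two-dimensional space $K$. If $\sum_i c_i f_i = 0$, then evaluating at any $j \in V$ gives
\[
\sum_i c_i f_i(j) = \Bigl( \sum_i c_i m_{ij} \Bigr) a(j) + c_j b(j) = 0,
\]
and the coefficient of $b(j)$ being zero forces $c_j = 0$.

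For isotropy, I would verify that $\form{f_i, f_k} = 0$ for all $i, k \in V$; bilinearity of $\form{\,,\,}$ then yields that $N$ itself is totally isotropic. Expanding $\form{f_i, f_k} = \sum_{j \in V} \form{f_i(j), f_k(j)}_K$, every index $j \notin \{i, k\}$ contributes a multiple of $\form{a(j), a(j)}_K = 0$, so only $j = i$ and $j = k$ matter. For $i = k$ the lone surviving term at $j = i$ is $\form{m_{ii} a(i) + b(i),\, m_{ii} a(i) + b(i)}_K$, which reduces to $m_{ii} \bigl( \form{a(i), b(i)}_K + \form{b(i), a(i)}_K \bigr)$; this vanishes when $\form{\,,\,}_K$ is skew-symmetric, and also when $\form{\,,\,}_K$ is symmetric since then $M$ is skew-symmetric and $m_{ii} = 0$. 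For $i \neq k$, a direct computation collapses the sum to $m_{ik} \form{a(k), b(k)}_K + m_{ki} \form{b(i), a(i)}_K$, which equals $m_{ik} + m_{ki}$ when $\form{\,,\,}_K$ is symmetric and $m_{ik} - m_{ki}$ when $\form{\,,\,}_K$ is skew-symmetric.

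The only substantive point, and the main (though mild) obstacle, is tracking the two symmetry conventions in parallel: the pairing imposed in the hypothesis is exactly the one that makes the off-diagonal expression vanish. When $M$ is skew-symmetric the form on $K$ is symmetric and $m_{ik} + m_{ki} = 0$; when $M$ is symmetric the form on $K$ is skew-symmetric and $m_{ik} - m_{ki} = 0$. In both cases $\form{f_i, f_k} = 0$, so $N$ is isotropic with $\dim(N) = |V|$, hence Lagrangian.
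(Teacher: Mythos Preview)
Your proof is correct and follows essentially the same approach as the paper's: both verify isotropy by computing $\form{f_i,f_k}$ directly via the two case split $i=k$ and $i\neq k$, using exactly the same cancellation from the matched symmetry conventions, and both observe linear independence of the $f_i$. You give a slightly more explicit argument for linear independence (reading off the $b(j)$-coefficient), whereas the paper simply asserts it is easy to see; otherwise the arguments are the same.
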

If $M$ is a skew-symmetric or symmetric matrix and $a$, $b$ are
supplementary chains on $V$ to $K$, then 
we call  $(M,a,b)$ a \emph{(general) matrix representation} of a Lagrangian
chain-group~$N$.
Furthermore if $a(v),b(v)\in \{\pm\tbinom10,\pm\tbinom01\}$ for each
$v\in V$, then $(M,a,b)$ is called a \emph{special matrix
  representation} of~$N$.

\begin{proof}
  For all $i\in V$,
  \[
  \form{f_i,f_i}=\sum_{j\in V}\form{f_i(j),f_i(j)}_K
  = m_{ii} (\form{a(i),b(i)}_K+\form{b(i),a(i)}_K)
  =0,
  \]
  because either $m_{ii}=0$ (if $M$ is skew-symmetric) or $\form{\,,\,}_K$ is skew-symmetric.

  Now let $i$ and $j$ be two distinct elements of $V$.  Then,
  \begin{align*}
  \form{f_i,f_j}
  &=\form{f_i(i),f_j(i)}_K
  +\form{f_i(j),f_j(j)}_K\\
  &=m_{ji} \form{b(i),a(i)}_K
  +m_{ij} \form{a(j),b(j)}_K\\
  &=0,
  \end{align*}
  because either $m_{ij}=-m_{ji}$ and
  $\form{b(i),a(i)}_K=\form{a(j),b(j)}_K$
  or $m_{ij}=m_{ji}$ and
  $\form{b(i),a(i)}_K=-\form{a(j),b(j)}_K$.

  It is easy to see that $\{f_i:i\in V\}$ is linearly independent and
  therefore $\dim(N)= |V|$. 
  This proves that $N$ is a Lagrangian chain-group.
  %
  %
\end{proof}

\subsection{Eulerian chains.}
A chain $a$ on $V$ to $K$ is called a \emph{(general) eulerian} chain of 
an
isotropic chain-group $N$
if
\begin{enumerate}[(i)]
\item $a(x)\neq 0$, $\form{a(x),a(x)}_K=0$ for all $x\in V$
and 
\item  there is no  non-zero chain $f\in N$ such
that $\form{f(x),a(x)}_K=0$ for all $x\in V$.
\end{enumerate}
A general eulerian chain $a$ is a \emph{special eulerian} chain 
if 
for all $v\in V$, $a(v)\in \{\pm\tbinom10,\pm\tbinom01\}$.
It is easy to observe that if $(M,a,b)$ is a general (special) matrix representation of
a Lagrangian chain-group $N$, then $a$ is a general (special) eulerian chain of $N$.
We will prove that every 
general eulerian chain of a Lagrangian chain-group
induces a matrix representation.
Before proving that, we first show that every Lagrangian chain-group
has a special eulerian chain.

\begin{PROP}
  Every isotropic chain-group has a special eulerian chain.
\end{PROP}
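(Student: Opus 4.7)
I would proceed by induction on $|V|$. The base case $|V|=0$ is trivial, since the empty chain satisfies both defining conditions of a special eulerian chain vacuously.

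For the inductive step, pick any $v\in V$. The crucial algebraic observation is that the isotropy of $N$ prevents both $\dup v$ and $\ddown v$ from simultaneously lying in $N$: were they both in $N$, we would have $\form{\dup v,\ddown v}_K=\pm1\neq 0$, contradicting $N\subseteq N^\bot$. After possibly interchanging the roles of $\tbinom10$ and $\tbinom01$, assume $\dup v\notin N$. Form the minor $M:=N\delete\{v\}$, which is an isotropic chain-group on $V\setminus\{v\}$ by Proposition~\ref{prop:minorisotropic}. By the induction hypothesis, $M$ admits a special eulerian chain $a'$. Extend $a'$ to a chain $a$ on $V$ by setting $a(v)=\tbinom10$; condition~(i) is immediate since $\tbinom10\in\{\pm\tbinom10,\pm\tbinom01\}$ and $\form{\tbinom10,\tbinom10}_K=0$.

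To verify condition~(ii), suppose $f\in N$ satisfies $\form{f(x),a(x)}_K=0$ for every $x\in V$. Lemma~\ref{lem:basic} applied at $v$ shows $f(v)\in\F\tbinom10$, so $f\cdot(V\setminus\{v\})\in M$, and the remaining orthogonality relations say this restriction is componentwise orthogonal to $a'$. The special eulerian property of $a'$ then forces $f\cdot(V\setminus\{v\})=0$, hence $f=c\,\dup v$ for some $c\in\F$. If $c\neq0$, we would conclude $\dup v\in N$, contradicting our choice of $v$; so $c=0$ and $f=0$.

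I do not anticipate a serious obstacle here. The only real insight is the isotropy-driven dichotomy showing that for every $v$ at least one of $\dup v,\ddown v$ lies outside $N$, which guarantees that at least one of the two minor operations $\delete\{v\}$ or $\contract\{v\}$ is compatible with a valid choice of $a(v)$. Once this is in place, Proposition~\ref{prop:minorisotropic} and Lemma~\ref{lem:basic} make the induction routine.
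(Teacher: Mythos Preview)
Your proof is correct and follows essentially the same induction as the paper's, with the same key insight that isotropy prevents both $\dup v$ and $\ddown v$ from lying in $N$. The only organizational difference is that you use this observation \emph{before} recursing to pick one minor ($N\delete\{v\}$ or $N\contract\{v\}$), whereas the paper applies the induction hypothesis to both minors and then uses the same isotropy fact afterward to show one of the two resulting extensions must work; the underlying content is the same.
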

\begin{proof}
  Let $N$ be an isotropic chain-group on $V$ to $K=\F^2$.
  We proceed by induction on $|V|$.
  We may assume that $\dim(N)>0$.
  Let $v\in V$.

  If $|V|=1$, then $\dim(N)=1$.  
  Then either $\dup v$ or $\ddown v$ is a special eulerian chain.

  Now let us assume that $|V|>1$. 
  Let $W=V\setminus \{v\}$.
 Both $N\delete \{v\}$ and $N\contract
  \{v\}$ 
  are isotropic chain-groups on $W$ to $K$. 
  By the induction hypothesis, 
  both $N\delete \{v\}$ and $N\contract \{v\}$ 
  have special eulerian chains $a_1'$, $a_2'$,
  respectively, on $W$ to $K$
  such that $a_i'(x)\in \{\tbinom10,\tbinom01\}$ for all $x\in W$.

  Let $a_1$, $a_2$ be chains on $V$ to $K$
  such that   $a_1(v)=\tbinom10$, $a_2(v)=\tbinom01$,
  and 
  $a_i\cdot W=a_i'$ for $i=1,2$.
  We claim that either $a_1$ or $a_2$ is a special eulerian chain of $N$.
  Suppose not. For each $i=1,2$, there is a nonzero chain $f_i\in N$ such that 
  $\form{f_i(x),a_i(x)}_K=0$ for all $x\in V$.
  By construction $f_1\cdot W\in N\delete \{v\}$ 
  and $f_2\cdot W\in N\contract \{v\}$.
  Since $a_1'$, $a_2'$ are special eulerian chains of $N\delete \{v\}$
  and $N\contract\{v\}$, respectively, 
  we have $f_1\cdot W=f_2\cdot W=0$.
  
  Since $f_i\neq 0$, by Lemma~\ref{lem:basic}, 
  $f_1=c_1 \dup v$ and $f_2=c_2\ddown v$ for some nonzero $c_1,c_2\in
  \F$.
  Then $\form{f_1,f_2}=\form{f_1(v),f_2(v)}_K=c_1c_2\neq 0$, contradictory to the assumption that $N$ is isotropic.
\end{proof}

\begin{PROP}\label{prop:chain2matrix}
  Let $N$ be a Lagrangian chain-group on $V$ to $K$
  and let $a$ be a general eulerian chain of $N$
  and let $b$ be a chain supplementary to $a$.

\begin{enumerate}
\item 
  For every $v\in V$, there exists a unique chain $f_v\in N$
  satisfying the following two conditions.
  \begin{enumerate}[(i)]
  \item $\form{a(v),f_v(v)}_K=1$,
  \item $\form{a(w),f_v(w)}_K=0$ for all $w\in V\setminus\{v\}$.
  \end{enumerate}
  Moreover, $\{f_v:v\in V\}$ is a basis of $N$. This basis is called
  the \emph{fundamental basis} of $N$ with respect to $a$.

\item

  If  $\form{\,,\,}_K$ is symmetric and 
  either the characteristic of $\F$ is not $2$ or 
  $f_v(v)=b(v)$ for all $v\in V$,
  then
  $M=(\form{f_i(j),b(j)}_K:i,j\in V)$ 
  is a  skew-symmetric matrix such that $(M,a,b)$ is a
  general matrix
  representation of~$N$.
\item 
  If $\form{\,,\,}_K$ is skew-symmetric, 
  $M=(\form{f_i(j),b(j)}_K:i,j\in V)$ 
  is a symmetric matrix such that $(M,a,b)$ is a general matrix representation of~$N$.
\end{enumerate}
\end{PROP}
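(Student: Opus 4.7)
My plan is to establish part (1) via a single linear-algebraic observation, then read off parts (2) and (3) from the isotropy of $N$.

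For part (1), I would introduce the linear map $\phi:N\to \F^V$ defined by $\phi(f)=(\form{a(v),f(v)}_K)_{v\in V}$. The condition that $a$ is a general eulerian chain of $N$ says exactly that $\ker\phi=0$. Since $N$ is Lagrangian, $\dim N=|V|=\dim\F^V$, so $\phi$ is a linear isomorphism. Taking $f_v=\phi^{-1}(e_v)$ gives the unique chain satisfying (i) and (ii), and $\{f_v\}$ is a basis of $N$ because $\{e_v\}$ is a basis of $\F^V$.

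For parts (2) and (3), the key is to compute $f_i(j)$ in the basis $\{a(j),b(j)\}$ of $K$. Since $a$ and $b$ are supplementary, $\form{a(j),b(j)}_K=1$, so $\{a(j),b(j)\}$ is a basis of $K$; write $f_i(j)=\alpha_{ij}\,a(j)+\beta_{ij}\,b(j)$. Condition (i)--(ii) together with $\form{a(j),a(j)}_K=0$ then forces $\beta_{ij}=\form{a(j),f_i(j)}_K=\delta_{ij}$. Next, using $\form{b(j),b(j)}_K=0$ (since $b$ is supplementary) and $\form{a(j),b(j)}_K=1$, a direct computation shows $\form{f_i(j),b(j)}_K=\alpha_{ij}$ in both cases. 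Setting $m_{ij}=\alpha_{ij}$ yields $f_i(j)=m_{ij}a(j)+\delta_{ij}b(j)$, which is exactly the form prescribed in Proposition~\ref{prop:matrix2chain}. So once I verify that $M$ has the correct symmetry type, the pair $(M,a,b)$ is automatically a general matrix representation of~$N$.

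The symmetry of $M$ comes from the isotropy of $N$: for any $i,j\in V$ we have $\form{f_i,f_j}=0$, and plugging in the formula $f_i(k)=m_{ik}a(k)+\delta_{ik}b(k)$ leaves only the cross terms
\begin{equation*}
\form{f_i,f_j}=\sum_{k\in V}\bigl(m_{ik}\delta_{jk}\form{a(k),b(k)}_K+\delta_{ik}m_{jk}\form{b(k),a(k)}_K\bigr).
\end{equation*}
If $\form{\,,\,}_K$ is skew-symmetric, this simplifies to $m_{ij}-m_{ji}=0$, so $M$ is symmetric, giving~(3). If $\form{\,,\,}_K$ is symmetric, it simplifies to $m_{ij}+m_{ji}=0$, so $M=-M^t$, giving the skew-symmetry in~(2).

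The one subtle point, and the only real obstacle, is the diagonal condition in part~(2): a skew-symmetric matrix must have zero diagonal even in characteristic $2$. In characteristic $\neq 2$, $m_{ii}+m_{ii}=0$ forces $m_{ii}=0$. In characteristic $2$, the extra hypothesis $f_v(v)=b(v)$ says $\alpha_{vv}=0$, i.e.\ $m_{vv}=0$, directly giving the zero diagonal. This is exactly why the proposition's hypothesis splits into those two cases, and handling it is a matter of noting the equivalence $m_{vv}=\alpha_{vv}=0 \iff f_v(v)=b(v)$.
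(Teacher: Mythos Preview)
Your proof is correct and follows essentially the same approach as the paper's: for parts (2) and (3) both arguments expand $f_i(j)$ in the basis $\{a(j),b(j)\}$, read off $m_{ij}=\form{f_i(j),b(j)}_K$, and derive the symmetry type from $\form{f_i,f_j}=0$, handling the characteristic-$2$ diagonal exactly as you do. Your treatment of part (1) via the single linear isomorphism $\phi:N\to\F^V$ is a cleaner packaging of the paper's argument, which instead builds an auxiliary subspace $W$ spanned by the chains $g_x$ with $g_x(x)=a(x)$, shows $N\oplus W=K^V$ by the same dimension count, and then treats existence, uniqueness, and the basis property separately---your map $\phi$ compresses all three into one stroke.
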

\begin{proof}
  Existence in (1):
  For each $x\in V$, let $g_x$ be a chain on $V$ to $K$ such that 
  $g_x(x)=a(x)$ and $g_x(y)=0$ for all $y\in V\setminus \{x\}$.
  Let $W$ be a chain-group spanned by $\{g_x:x\in V\}$. 
  It is clear that $\dim(W)=|V|$.
  Let $N+W=\{f+g:f\in N,g\in W\}$. Since $a$ is eulerian,
  $N\cap W=\{0\}$ and therefore $\dim(N+W)=\dim(N)+\dim(W)=2|V|$,
  because $N$ is Lagrangian.
  We conclude that $N+W=K^V$.
  Let $h_v$ be a chain on $V$ to $K$ such that
  $\form{a(v),h_v(v)}_K=1$ and $h_v(w)=0$ for all $w\in V\setminus\{ v\}$.
  We express $h_v=f_v+g$ for some $f_v\in N$ and $g\in W$.
  Then 
  $\form{a(v),f_v(v)}_K=\form{a(v),h_v(v)}_K-\form{a(v),g(v)}_K=1$
  and 
  $\form{a(w),f_v(w)}_K=\form{a(w),h_v(w)}_K-\form{a(w),g(w)}_K=0$
  for all $w\in V\setminus \{v\}$.
  
  \smallskip

  Uniqueness in (1):
  Suppose that there are two chains $f_v$ and $f_v'$ in $N$ satisfying two
  conditions (i), (ii) in (1).
  Then
  $\form{a(v),f_v(v)-f_v'(v)}_K=0$.
  By Lemma~\ref{lem:basic}, 
  there exists $c\in \F$ such that
  $f_v(v)-f_v'(v)=c a(v)$.
  Let  $f=f_v-f_v'\in N$. Then $\form{a(w),f(w)}_K=0$ for all $w\in
  V$.
  Since $a$ is eulerian,
  $f=0$ and therefore $f_v=f_v'$.

  \smallskip

  Being a basis in (1):  
  We claim that $\{f_v:v\in V\}$ is linearly independent. Suppose that
  $\sum_{w\in V} c_w f_w=0$ for some $c_w\in \F$.
  Then
  $  c_v
  =\sum_{w\in V} c_w\form{a(v), f_w(v)}_K=0$
 for all $v\in V$.
 
 \smallskip

 Constructing a matrix for (2) and (3):
  Let $i,j\in V$.
   By (ii) and Lemma~\ref{lem:basic}, 
   there exists $m_{ij}\in \F$ such that 
  $f_i(j)=m_{ij}a(j)$ if $i\neq j$
  and 
  $f_i(i)-b(i)=m_{ii}a(i)$.
  Then, $\form{f_i(j),b(j)}_K=m_{ij}$ for all $i,j\in V$.
  Therefore $M=(m_{ij}:i,j\in V)$. 
  
  Since $N$ is isotropic, 
  \[\form{f_i,f_j}=\sum_{v\in V} \form{f_i(v),f_j(v)}_K=0\] and we deduce that
  $\form{f_i(i),f_j(i)}_K+\form{f_i(j),f_j(j)}_K=0$ if $i\neq j$
  and 
  $\form{f_i(i),f_i(i)}_K=0$.
  This implies that 
  \[
  m_{ji} \form{b(i),a(i)}_K+m_{ij}\form{a(j),b(j)}_K=0 \text{ for all
  }i,j\in V.  
  \]

  If $\form{\,,\,}_K$ is skew-symmetric, then $\form{b(i),a(i)}_K=-1$
  and therefore $m_{ji}=m_{ij}$.

  If $\form{\,,\,}_K$ is symmetric, then $\form{b(i),a(i)}_K=1$ and so
  $m_{ji}=-m_{ij}$. 
  This also imply that $m_{ii}=0$ if the characteristic of $\F$ is not
  $2$.
  If the characteristic of $\F$ is $2$, then we assumed that
  $f_i(i)=b(i)$ and therefore $m_{ii}=0$.
  Note that $\form{f_i(i),f_i(i)}_K=0$ and therefore the chain $b$
  with $b(i)=f_i(i)$ for all $i\in V$ is supplementary to $a$.
  %
  
  It is easy to observe that $(M,a,b)$  is a general matrix representation of
  $N$
  because $a$, $b$ are supplementary
  and $f_i(j)=m_{ij}a(j)+b(j)$ if $i=j\in V$
and $f_i(j)=m_{ij}a(j)$ if $i\neq j$.
\end{proof}

\begin{PROP}\label{prop:eulerian}
  Let $(M,a,b)$ be a special matrix representation of a Lagrangian
  chain-group $N$ on $V$ to $K=\F^2$. 
  Suppose that $a'$ is a chain such that $a'(v)\in
  \{\pm\tbinom10,\pm\tbinom01\}$
  for all $v\in V$.
  Then $a'$ is special eulerian if and only if 
  $M[Y]$  is nonsingular for $Y=\{x\in V: a'(x)\neq \pm a(x) \}$.
\end{PROP}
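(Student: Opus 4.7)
The plan is to expand an arbitrary chain of $N$ in the fundamental basis furnished by the matrix representation, compute $\form{f(v),a'(v)}_K$ coordinatewise, and reduce condition~(ii) of the definition of special eulerian to the nonsingularity of $M[Y]$.

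First I would dispatch condition~(i). Every vector in $\{\pm\tbinom{1}{0},\pm\tbinom{0}{1}\}$ is nonzero and is isotropic under both $b^+$ and $b^-$, so condition~(i) in the definition of a special eulerian chain is automatic for any $a'$ of the stated form. Moreover, since $(M,a,b)$ is a \emph{special} matrix representation, $a(v), b(v) \in \{\pm\tbinom{1}{0},\pm\tbinom{0}{1}\}$ and they are linearly independent (because $\form{a(v),b(v)}_K=1$), so at each $v\in V$ the four vectors $\pm a(v),\pm b(v)$ exhaust $\{\pm\tbinom{1}{0},\pm\tbinom{0}{1}\}$. Hence $a'(v)$ is either $\pm a(v)$ or $\pm b(v)$, and the latter alternative happens precisely when $v\in Y$.

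Next I would use the basis $\{f_i:i\in V\}$ from Proposition~\ref{prop:matrix2chain}: $f_i(j)=m_{ij}a(j)$ if $i\neq j$, and $f_i(i)=m_{ii}a(i)+b(i)$. A general $f=\sum_{i\in V}c_i f_i\in N$ then satisfies $f(j)=\bigl(\sum_{i\in V}c_i m_{ij}\bigr)a(j)+c_j b(j)$ for each $j$. Using isotropy of $a(j)$ and $b(j)$ and the supplementarity $\form{a(j),b(j)}_K=1$, the case analysis is immediate: if $j\notin Y$, so $a'(j)=\epsilon_j a(j)$ with $\epsilon_j\in\{\pm1\}$, then $\form{f(j),a'(j)}_K=\epsilon_j c_j\form{b(j),a(j)}_K$, which vanishes iff $c_j=0$; if $j\in Y$, so $a'(j)=\epsilon_j b(j)$, then $\form{f(j),a'(j)}_K=\epsilon_j\sum_{i\in V}c_i m_{ij}\form{a(j),b(j)}_K=\epsilon_j\sum_{i\in V}c_i m_{ij}$, which vanishes iff $\sum_{i\in V}c_i m_{ij}=0$.

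Combining the two cases, the condition that $\form{f(x),a'(x)}_K=0$ for every $x\in V$ is equivalent to $c_j=0$ for all $j\notin Y$ together with $\sum_{i\in Y}c_i m_{ij}=0$ for all $j\in Y$; in matrix form, $(c_i)_{i\in Y}$ lies in the left kernel of $M[Y]$. A nonzero such $f$ therefore exists iff $M[Y]$ has a nontrivial left kernel, iff $M[Y]$ is singular. Taking the contrapositive yields the proposition. The whole argument is routine linear algebra once the case analysis above is carried out, so I do not anticipate any substantive obstacle; the only thing to keep in mind is that the specialness of $(M,a,b)$ is genuinely used to identify the set $Y$ with the set of coordinates at which $a'$ equals $\pm b$.
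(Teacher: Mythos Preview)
Your argument is correct and follows essentially the same route as the paper: expand a chain in the fundamental basis $\{f_i\}$, compute $\form{f(j),a'(j)}_K$ according to whether $j\in Y$ or not, and reduce the existence of a nonzero witness to the singularity of $M[Y]$. Your treatment is in fact slightly tidier, since you handle both directions simultaneously via the left kernel and explicitly verify condition~(i), which the paper leaves implicit.
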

\begin{proof}
  Let $M=(m_{ij}:i,j\in V)$.
  Let $f_i\in N$ be a chain such that $f_i(j)=m_{ij}a(j)$ if $j\neq i$ and $f_i(i)=m_{ii}a(i)+b(i)$.

  We first prove that if $M[Y]$ is nonsingular, then $f$ is special
  eulerian. 
  Suppose that there is a chain $f\in N$ such that
  $\form{f(x),a'(x)}_K=0$ for all $x\in V$.
  We may express $f$ as a linear combination $\sum_{i\in V} c_i f_i$ with some $c_i\in \F$.
  If $j\notin Y$, then $a'(j)=\pm a(j)$
  and $\form{f(j),a(j)}_K = c_j \form{b(j),a(j)}_K=0$ and therefore
  $c_j=0$ for all $j\notin Y$.

  If $j\in Y$, then $a'(j)=\pm b(j)$ and so 
  \[
  \form{f(j),b(j)}_K= \sum_{i\in Y} c_i m_{ij}\form{a(j),b(j)}_K=
  \sum_{i\in Y} c_i m_{ij} =
  0.\]
  Since $M[Y]$ is invertible, the only solution $\{c_i:i\in Y\}$
  satisfying the above linear equation is zero. So $c_i=0$ for all
  $i\in V$ and therefore $f=0$, meaning that $a'$  is special
  eulerian.

  Conversely 
  suppose that $M[Y]$ is singular. 
  Then there is a linear combination of rows in $M[Y]$ whose sum is zero.
  Thus there is a non-zero linear combination $\sum_{i\in Y} c_i f_i$
  such that
  \[\form{\sum_{i\in Y} c_i f_i(x),b(x)}_K=0\text{ for all }x\in Y.\]
  Clearly 
  $\form{\sum_{i\in Y} c_i f_i(x),a(x)}_K=0$ for all $x\notin Y$.
  Since at least one $c_i$ is non-zero, $\sum_{i\in Y}c_i f_i$ is
  non-zero. 
  Therefore $a'$ can not be special eulerian. 
\end{proof}


For a subset $Y$ of $V$, 
let $I_Y$ be a $V\times V$ \emph{indicator diagonal matrix} such that 
each diagonal entry corresponding to $Y$ is $-1$
and all other diagonal entries are $1$.
\begin{PROP}\label{prop:chainpivot}
  Suppose that $(M,a,b)$ is a special matrix representation of a Lagrangian
  chain-group $N$
  on $V$ to $K=\F^2$.
  Let $Y\subseteq V$.
  Assume that $M[Y]$ is nonsingular.
  \begin{enumerate}
  \item If $\form{\,,\,}_K$ is symmetric, then 
    $(M*Y,a',b')$ is another special matrix representation of $N$
    where $M*Y$ is skew-symmetric and 
    \[
      a'(v)=
      \begin{cases}
        a(v) &\text{if }v\notin Y,\\
        b(v) &\text{otherwise,}
      \end{cases}
      \quad\quad
      b'(v)=
      \begin{cases}
        b(v) &\text{if }v\notin Y,\\
        a(v) &\text{otherwise.}
      \end{cases}
      \]
    \item If $\form{\,,\,}_K$ is skew-symmetric, then 
      $(I_Y (M*Y),a',b')$ is  another special matrix representation of $N$
      where $I_Y(M*Y)$ is symmetric and 
      \[
      a'(v)=
      \begin{cases}
        a(v) &\text{if }v\notin Y,\\
        b(v) &\text{otherwise,}
      \end{cases}
      \quad\quad
      b'(v)=
      \begin{cases}
        b(v) &\text{if }v\notin Y,\\
        -a(v) &\text{otherwise.}
      \end{cases}
      \]
 \end{enumerate}
\end{PROP}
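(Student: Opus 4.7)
The plan is to derive the new matrix representation from Propositions~\ref{prop:eulerian} and~\ref{prop:chain2matrix}, and then to identify the resulting matrix with $M*Y$ (respectively $I_Y(M*Y)$) by a direct block computation.

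First I would verify that $a'$ is a special eulerian chain of $N$. By construction $a'(v)\in\{\pm\tbinom{1}{0},\pm\tbinom{0}{1}\}$ for all $v$, and since $(a,b)$ is supplementary the vectors $a(v)$ and $b(v)$ lie in different pairs $\{\pm\tbinom{1}{0}\}$, $\{\pm\tbinom{0}{1}\}$, so $\{v\in V:a'(v)\neq \pm a(v)\}=Y$. Proposition~\ref{prop:eulerian} together with the hypothesis that $M[Y]$ is nonsingular then yields that $a'$ is special eulerian. A short direct check also shows that $b'$ is supplementary to $a'$: outside $Y$ nothing changes, while for $v\in Y$ the required identity $\form{a'(v),b'(v)}_K=1$ reduces in the symmetric case to $\form{b(v),a(v)}_K=\form{a(v),b(v)}_K=1$, and in the skew-symmetric case the sign in $b'(v)=-a(v)$ is precisely what converts $\form{b(v),a(v)}_K=-1$ into $1$.

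With $a'$ special eulerian and $b'$ supplementary to it, Proposition~\ref{prop:chain2matrix} produces a matrix $M'$ (skew-symmetric in case~(1), symmetric in case~(2)) such that $(M',a',b')$ is a special matrix representation of $N$. The remaining task is to identify $M'$ with $M*Y$ (case~(1)) or with $I_Y(M*Y)$ (case~(2)).

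For this identification I would encode each chain in $N$ as a row vector in $\F^{2V}$, with coordinates at $v$ recording the coefficients of $a(v)$ and $b(v)$. Splitting $V=Y\cup(V\setminus Y)$ and writing $M=\left(\begin{smallmatrix}A&B\\C&D\end{smallmatrix}\right)$ in the corresponding block form, the original fundamental basis $\{f_i\}$ assembles into a row matrix whose $Y$-block is $(A\mid B\mid I\mid 0)$ and whose $(V\setminus Y)$-block is $(C\mid D\mid 0\mid I)$, with columns ordered as $a|_Y,a|_{V\setminus Y},b|_Y,b|_{V\setminus Y}$. The desired new fundamental basis $\{f'_i\}$ with respect to $(a',b')$ assembles analogously in terms of the blocks of $M'$, and requiring the two row spans to coincide pins down a unique invertible change-of-basis matrix; reading off its blocks produces the explicit formula for $M'$. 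I expect the main bookkeeping obstacle to be the signs on the $Y$-rows in the skew-symmetric case: there the sign in $b'|_Y=-a|_Y$ turns the identity block on $Y$ in the new row matrix into $-I$, which cascades through the change of basis and multiplies the $Y$-rows of $M*Y$ by $-1$, producing exactly $I_Y(M*Y)$.
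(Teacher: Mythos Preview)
Your outline follows the paper's approach closely: both begin by invoking Proposition~\ref{prop:eulerian} to certify that $a'$ is eulerian, and both then compute the new fundamental basis and read off the matrix. The paper carries this out by solving explicitly for the coefficients $c_{xi}$ in $g_x=\sum_i c_{xi}f_i$ and then evaluating $\form{g_x(y),b'(y)}_K$ block by block; your row-span/change-of-basis packaging is a cleaner way to organise the very same linear algebra, and the block reduction you sketch does indeed produce $M*Y$ (respectively $I_Y(M*Y)$).

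There is one genuine gap. In case~(1), your appeal to Proposition~\ref{prop:chain2matrix} to produce a \emph{skew-symmetric} $M'$ with $(M',a',b')$ a representation of $N$ is not justified when $\operatorname{char}\F=2$: part~(2) of that proposition carries the extra hypothesis $f'_v(v)=b'(v)$, which you have not verified for your particular choice of $b'$. The paper handles this explicitly by checking that $\form{g_x(x),b'(x)}_K=0$ for all $x$ (using that $M$ and $(M[Y])^{-1}$ are skew-symmetric), which forces $g_x(x)=b'(x)$. You have two clean fixes: either insert this verification, or bypass Proposition~\ref{prop:chain2matrix} altogether and run your row-span computation as a direct check that the basis built from $(M*Y,a',b')$ via Proposition~\ref{prop:matrix2chain} spans $N$; skew-symmetry of $M*Y$ (and symmetry of $I_Y(M*Y)$) is already recorded in Section~\ref{sec:prelim}, so nothing is lost.
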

\begin{proof}
  Let $M=(m_{ij}:i,j\in V)$.
  For each $i\in V$, let $f_i\in N$ be a chain such that
  $f_i(j)=m_{ij}a(j)$ if $j\neq  i$
  and $f_i(i)=m_{ij}a(j)+b(j)$ if $j=i$.
  Since $(M,a,b)$ is a special matrix representation of $N$, 
  $\{f_i:i\in V\}$ is a fundamental basis of $N$.

  Proposition~\ref{prop:eulerian} implies that $a'$ is eulerian.
  %
 According to Proposition~\ref{prop:chain2matrix}, we should be able
  to construct a special matrix representation with respect to the eulerian
  chain $a'$. To do so, we first construct the fundamental basis 
  $\{g_v: v\in V\}$ of $N$
  with respect to $a'$.

  Suppose that for each $x\in V$, 
  $g_x=\sum_{i\in V}c_{xi} f_i$  for some $c_{xi}\in \F$.
  By definition, 
  $\form{a'(x),g_x(x)}_K=1$ and 
  $\form{a'(j),g_x(j)}_K=0$  for all $j\neq x$.
  Then
  \[
  \form{a'(j),g_x(j)}_K=
  \begin{cases}
    \sum_{i\in V} c_{xi} m_{ij} \form{b(j),a(j)}_K, 
    & \text{if }j\in Y,\\
    c_{xj}.
    & \text{if }j\notin Y.
  \end{cases}
  \]

  Suppose that $x\in Y$.
  If $j\in Y$, then 
  \[\sum_{i\in Y} c_{xi} m_{ij} \form{b(j),a(j)}_K=
  \begin{cases}
    1 & \text{if } x=j,\\
    0 &\text{if }x\neq j.
  \end{cases}
  \]
  Let $(m_{ij}':i,j\in Y)=(M[Y])^{-1}$.
  Then $c_{xi}$ is given by the row of $x$ in  $(M[Y])^{-1}$; in other
  words,
  if $x,i\in Y$, then
  $c_{xi}=m_{xi}'$ if $\form{\,,\,}_K$ is symmetric
  and $c_{xi}=-m_{xi}'$ otherwise.
  If $x\in Y$ and $i\notin Y$, then $c_{xi}=0$.

  If $x\notin Y$,  then clearly $c_{xx}=1$ and $c_{xi}=0$ for all
  $i\in V\setminus (Y\cup \{x\})$.
  If $j\in Y$, then 
  $ \sum_{i\in Y} c_{xi}m_{ij} \form{b(j),a(j)}_K
  +c_{xx}m_{xj}\form{b(j),a(j)}_K=0$ 
  and therefore 
  $\sum_{i\in Y} c_{xi} m_{ij} =-m_{xj}$.
  For each $k$ in $Y$, we have
  $c_{xk}=\sum_{i\in Y} c_{xi} \sum_{j\in Y} m_{ij} m'_{jk} 
  = \sum_{j\in Y} m'_{jk} \sum_{i\in Y} c_{xi} m_{ij}
  =-\sum_{j\in Y} m'_{jk}m_{xj}$
  and therefore  for $x\notin Y$ and $i\in Y$, 
  $c_{xi}=-\sum_{j\in Y} m_{xj} m'_{ji}$ 
 
  We determined  the fundamental basis $\{g_x:x\in V\}$ with
  respect to $a'$. We now wish to compute the matrix according to
  Proposition~\ref{prop:chain2matrix}.
 Let us compute $\form{g_x(y),b'(y)}_K$.

  If $x,y\in Y$, then 
  \begin{multline*}
  \form{\sum_{i\in Y} c_{xi} f_i(y), b'(y)}_K\\= 
   c_{xy} \form{b(y), b'(y)}_K
   =c_{xy}=
   \begin{cases}
     m'_{xy} &\text{if  $ \form{\,,\,}_K$ is symmetric,}\\
     -m'_{xy} & \text{if $\form{\,,\,}_K$ is skew-symmetric.}
   \end{cases}
   \end{multline*}
   If $x\in Y$ and $y\notin Y$, then 
  \begin{multline*}
    \form{\sum_{i\in Y} c_{xi} f_i(y), b'(y)}_K= 
    \sum_{i\in Y} c_{xi} m_{iy} \form{a(y), b(y)}_K\\
    =
    \begin{cases}
      \sum_{i\in Y} m'_{xi} m_{iy}.
      &\text{if  $ \form{\,,\,}_K$ is symmetric,}\\
      -\sum_{i\in Y} m'_{xi} m_{iy}.
     & \text{if $\form{\,,\,}_K$ is skew-symmetric.}
   \end{cases}
 \end{multline*}
  If $x\notin Y$ and $y\in Y$, then 
  \[
  \form{\sum_{i\in Y} c_{xi} f_i(y)+f_x(y), b'(y)}_K= c_{xy}=
  -\sum_{j\in Y} m_{xj} m'_{jy}.
  \]
  If $x\notin Y$ and $y\notin Y$, then 
  \[
  \form{\sum_{i\in Y}c_{xi}f_i(y)+f_x(y),b'(y)}_K
  =
  -\sum_{i,j\in Y}m_{xj}m'_{ji} m_{iy} + m_{xy}
  \]

  If $\form{\,,\,}_K$ is symmetric and the characteristic of $\F$ is
  $2$, then we need to ensure that $M$ has no non-zero diagonal
  entries 
  by verifying the additional assumption in (2) of
  Proposition~\ref{prop:chain2matrix} asking that
  $b'(x)=g_x(x)$ for all $x\in V$.
  It is enough to show that 
  \[
  \form{g_x(x),b'(x)}_K=0\text{ for all }x\in V,
  \]
 because, if so, then   $\form{a'(x),b'(x)}_K=1=\form{a'(x),g_x(x)}_K$ implies that
  $g_x(x)=b'(x)$.
  Since $M[Y]$ is skew-symmetric, so is its inverse and therefore
  $m'_{xx}=0$ for all $x\in Y$.
  Furthermore, for each $i,j\in Y$ and $x\in V\setminus Y$,
  we have $m_{xj} m'_{ji} m_{ix} =-m_{xi}m'_{ij}m_{jx}$ 
  because $M$ and $(M[Y])^{-1}$ are skew-symmetric
  and therefore
  $\sum_{i,j\in Y} m_{xj}m'_{ji}m_{ix}=0$. Thus 
  $g_x(x)=b'(x)$ for all $x\in V$ if $\form{\,,\,}_K$ is symmetric and the
characteristic of $\F$ is $2$.


  We conclude that the matrix $(\form{g_i(j),b'(j)}_K:i,j\in V)$
  is indeed $M*Y$ if $\form{\,,\,}_K$ is symmetric
or $(I_Y) (M*Y)$ if $\form{\,,\,}_K$ is skew-symmetric.
This concludes the proof.
\end{proof}


A matrix $M$ is called a \emph{fundamental matrix} of a Lagrangian chain-group $N$
if $(M,a,b)$ is a special matrix representation of $N$
for some chains $a$ and $b$.
We aim to characterize when two matrices $M$ and $M'$ 
are fundamental matrices of the same Lagrangian chain-group.

\begin{THM}\label{thm:equivmatrix}
  Let $M$ and $M'$ be $V\times V$ skew-symmetric or symmetric matrices
  over $\F$.
  The following are equivalent.
  \begin{enumerate}[(i)]
  \item There is a Lagrangian chain-group $N$
    such that
    both $(M,a,b)$ and $(M',a',b')$ are special matrix
    representations of $N$ for some chains $a$, $a'$, $b$, $b'$.
  \item There is $Y\subseteq V$ such that $M[Y]$ is nonsingular
    and 
    \[M'=
    \begin{cases}
      D(M*Y)D & \text{if $\form{\,,\,}_K$ is symmetric,}\\
      D I_Y(M*Y)D &\text{if $\form{\,,\,}_K$ is skew-symmetric}
   \end{cases}
   \]
    for some diagonal matrix $D$ whose diagonal entries
    are $\pm1$.
 \end{enumerate}
\end{THM}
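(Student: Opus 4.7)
The plan is to show that the two moves on special matrix representations permitted in (ii), namely pivoting on a nonsingular principal submatrix and conjugating by a $\pm1$ diagonal matrix, correspond precisely to, respectively, applying Proposition~\ref{prop:chainpivot} and rescaling the supplementary pair $(a,b)$ by signs. Both operations manifestly preserve the underlying Lagrangian chain-group, which gives (ii)$\Rightarrow$(i), while the converse amounts to the statement that any two special matrix representations of the same $N$ differ only by these two kinds of moves.

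For (ii)$\Rightarrow$(i), I would construct $N$ from $M$ via Proposition~\ref{prop:matrix2chain} with special representation $(M,a,b)$. Proposition~\ref{prop:chainpivot} then produces a second special representation $(\widetilde M,a'',b'')$ of the same $N$, where $\widetilde M$ is $M*Y$ in the symmetric case and $I_Y(M*Y)$ in the skew-symmetric case. Setting $a'(v)=D_{vv}a''(v)$ and $b'(v)=D_{vv}b''(v)$ keeps both chains inside $\{\pm\tbinom10,\pm\tbinom01\}$ and preserves supplementarity, so by the uniqueness clause of Proposition~\ref{prop:chain2matrix}(1) the fundamental basis of $N$ relative to $a'$ must be $\{D_{vv}f''_v\}$ in terms of the basis relative to $a''$. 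A direct computation then yields $m'_{ij}=D_{ii}D_{jj}\widetilde m_{ij}$, so $(D\widetilde M D,a',b')$ is a special matrix representation of $N$.

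For the converse (i)$\Rightarrow$(ii), view $a'$ as a special eulerian chain of $N$. Proposition~\ref{prop:eulerian} applied to $(M,a,b)$ shows that $M[Y]$ is nonsingular for $Y=\{v:a'(v)\neq\pm a(v)\}$. Proposition~\ref{prop:chainpivot} pivots on $Y$ to produce $(\widetilde M,a'',b'')$ with $a''(v)=a(v)$ off $Y$ and $a''(v)=b(v)$ on $Y$. The definition of $Y$ together with $a'(v)\in\{\pm\tbinom10,\pm\tbinom01\}$ forces $a'(v)=\pm a''(v)$ pointwise; letting $D$ record these signs, I then need to verify that $b'(v)=D_{vv}b''(v)$ as well, after which the same entrywise computation as above identifies $M'$ with $D\widetilde M D$.

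This sign propagation from $a$ to $b$ is the main technical point. It reduces to the elementary observation that within $\{\pm\tbinom10,\pm\tbinom01\}$ the unique partner $\beta$ of any $\alpha$ with $\form{\alpha,\beta}_K=1$ flips sign exactly when $\alpha$ does; a short case check in both the symmetric and skew-symmetric bilinear forms confirms this and also that the pair $(a'',b'')$ returned by Proposition~\ref{prop:chainpivot} is of this canonical form on both halves of $V$. Granting this, $a'=Da''$ forces $b'=Db''$, and the proof closes.
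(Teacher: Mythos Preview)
Your proposal is correct and follows essentially the same approach as the paper: both directions use Proposition~\ref{prop:chainpivot} to handle the pivot and then a sign-rescaling argument for the $\pm1$ diagonal conjugation, with the final identification coming from the uniqueness of the fundamental basis in Proposition~\ref{prop:chain2matrix}(1). The paper's version of (i)$\Rightarrow$(ii) replaces $M$ by its pivot so as to reduce to the case $Y=\emptyset$ and then invokes the same supplementarity constraint $\form{a'(x),b'(x)}_K=1$ that you isolate as ``sign propagation''; your explicit observation that within $\{\pm\tbinom10,\pm\tbinom01\}$ the partner $\beta$ with $\form{\alpha,\beta}_K=1$ is unique is exactly what the paper uses implicitly when it writes ``$b'(x)=-b(x)$ if and only if $x\in Z$''.
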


\begin{proof}
  To prove (i) from (ii), we use 
  Proposition~\ref{prop:chainpivot}.
  Let $a(v)=\tbinom10$ and $b(v)=\tbinom01$ for all $v\in V$.
  Let $N$ be the Lagrangian chain-group with the special matrix representation $(M,a,b)$.
  Let $M_0=M*Y$ if $\form{\,,\,}_K$ is symmetric
  and $M_0=I_Y (M*Y)$ if $\form{\,,\,}_K$ is skew-symmetric.
  By Proposition~\ref{prop:chainpivot}, 
  there are chains $a_0$, $b_0$ so that 
  $(M_0,a_0,b_0)$ is a special matrix representation of $N$.
  Let $Z$ be a subset of $V$ such that $I_Z=D$.
  For each $v\in V$, let
  \[
  a'(v)=
  \begin{cases}
    -a_0(v) &\text{if }v\in Z,\\
    a_0(v) &\text{if }v\notin Z,
  \end{cases}
  \quad
  b'(v)=
  \begin{cases}
    -b_0(v) &\text{if }v\in Z,\\
    b_0(v) &\text{if }v\notin Z.
  \end{cases}
  \]
  Then $a'$, $b'$ are supplementary and
  $(M',a',b')$ is a special matrix representation of $N$
  because $M'=DM_0D$.

  Now let us assume (i) and prove (ii).
  Let $Y=\{x\in V: a'(x)\neq\pm a(x)\}$.
  Since $a'$ is a special eulerian chain
  of $N$,   
  $M[Y]$ is nonsingular
  by Proposition~\ref{prop:eulerian}.
  By replacing $M$ with $M*Y$ if $\form{\,,\,}_K$ is symmetric,
  or $I_Y(M*Y)$ if $\form{\,,\,}_K$ is skew-symmetric, 
  we may assume that $Y=\emptyset$.
  Thus $a'(x)=\pm a(x)$ and $b'(x)=\pm b(x)$ for all $x\in V$.
  Let $Z=\{x\in V: a'(x)=-a(x)\}$ and $D=I_Z$.
  Since $\form{a'(x),b'(x)}_K=1$, $b'(x)=-b(x)$ if and only if $x\in
  Z$.
  Then $(DMD,a',b')$ is a special matrix representation of $N$,
  because the fundamental basis generated by $(DMD,a',b')$ spans the
  same subspace $N$ spanned by the fundamental basis generated by
  $(M,a,b)$.
  We now have two special matrix representations $(M',a',b')$ and $(DMD,a',b')$.
  By Proposition~\ref{prop:chain2matrix}, $M'=DMD$ because of the
  uniqueness of the fundamental basis with respect to $a'$.
  This concludes the proof.
\end{proof}
\emph{Negating} a row or a column of a matrix 
is to multiply $-1$ to each of its entries.
Obviously a matrix obtained by negating some rows and columns of a $V\times V$ matrix $M$
is of the form $I_X M I_Y$ for some $X,Y\subseteq V$.
We now prove that the order of applying pivots and negations can be
reversed.
\begin{LEM}\label{lem:negating}
  Let $M$ be a $V\times V$ matrix and let $Y$ be a subset of $V$ such
  that $M[Y]$ is nonsingular.
Let $M'$ be  a matrix obtained from $M$ by negativing some rows and columns.
  Then $M'*Y$
  can be obtained from $M*Y$ by negating some rows and columns. (See Figure~\ref{fig:dig}.)
\end{LEM}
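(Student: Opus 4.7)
The plan is to write everything in block form with respect to the partition $(Y, V\setminus Y)$ and compute $M'*Y$ directly, then compare to $M*Y$ block by block. Since $M' = I_X M I_Z$ for some $X, Z \subseteq V$, I would split $X = X_1 \cup X_2$ with $X_1 = X \cap Y$ and $X_2 = X \setminus Y$, and similarly $Z = Z_1 \cup Z_2$.

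With
\[
M = \begin{pmatrix} A & B \\ C & D \end{pmatrix},
\qquad
I_X = \begin{pmatrix} I_{X_1} & 0 \\ 0 & I_{X_2} \end{pmatrix},
\qquad
I_Z = \begin{pmatrix} I_{Z_1} & 0 \\ 0 & I_{Z_2} \end{pmatrix},
\]
I would compute $M' = I_X M I_Z$ and then apply the pivot definition. Using $I_{X_1}^{-1} = I_{X_1}$ and the standard formula $(I_{X_1} A I_{Z_1})^{-1} = I_{Z_1} A^{-1} I_{X_1}$, each of the four blocks of $M'*Y$ comes out to $I_\alpha \cdot (\text{block of } M*Y) \cdot I_\beta$ for suitable $\alpha, \beta$, with the key point being that the role of $X_1$ and $Z_1$ swaps in the $Y$-block (because it is inverted), while $X_2, Z_2$ stay in place. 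A direct calculation shows
\[
M'*Y \;=\; I_{(X\setminus Y)\cup(Z\cap Y)} \,(M*Y)\, I_{(Z\setminus Y)\cup(X\cap Y)},
\]
which exhibits $M'*Y$ as the result of negating some rows and columns of $M*Y$.

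The only thing requiring care is the bookkeeping: one must check that the inversion indeed swaps $X_1$ and $Z_1$ (because on the pivoted block the ``rows'' of $A^{-1}$ correspond to the old columns of $A$), and that the off-diagonal blocks $A^{-1}B$ and $-CA^{-1}$ inherit the correct combination of indicator matrices. There is no conceptual obstacle—everything is forced by how $I_{X_i}$ and $I_{Z_j}$ pass through products—so the proof is essentially a one-screen block-matrix calculation, and I do not anticipate any genuinely hard step beyond keeping the indices straight.
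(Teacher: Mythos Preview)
Your proposal is correct and is essentially the same block-matrix calculation as the paper's proof. The only cosmetic difference is that the paper writes $M' = \left(\begin{smallmatrix} JAK & JBL \\ UCK & UDL \end{smallmatrix}\right)$ with arbitrary nonsingular diagonal matrices $J,K,L,U$, computes $M'*Y = \left(\begin{smallmatrix} K^{-1}A^{-1}J^{-1} & K^{-1}(A^{-1}B)L \\ U(-CA^{-1})J^{-1} & U(D-CA^{-1}B)L \end{smallmatrix}\right)$, and then specializes to $\pm 1$ diagonals at the end, whereas you work with the indicator matrices $I_{X_1},I_{X_2},I_{Z_1},I_{Z_2}$ from the start; your explicit formula $M'*Y = I_{(X\setminus Y)\cup(Z\cap Y)}\,(M*Y)\,I_{(Z\setminus Y)\cup(X\cap Y)}$ is exactly what the paper's computation gives after that specialization.
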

\begin{figure}
  \centering
  \label{fig:dig}
  \[
  \begin{CD}
    M @>\text{pivot}>> M*Y\\
    @V \substack{\text{negating some}\\\text{rows and columns}} VV   
    @VV\substack{\text{negating some}\\\text{rows and columns}} V   \\
   M' @>\text{pivot}>> M'*Y
  \end{CD}
  \]
  \caption{Commuting pivots and negations}
\end{figure}
\begin{proof}
 More generally we write $M$ and $M'$ as follows:
  \[
  M=\bordermatrix{ & Y & V\setminus Y\cr
    Y  & A & B \cr
    V\setminus Y & C & D}
 ,\quad
 M'=\bordermatrix{ & Y & V\setminus Y\cr
    Y  & JAK & JBL \cr
    V\setminus Y & UCK & UDL},
 \]
 for some nonsingular diagonal matrices $J$, $K$, $L$, $U$.
 Then 
 \begin{align*}
   M*Y&=\begin{pmatrix}
     A^{-1} & A^{-1}B \\
     -CA^{-1} & D-CA^{-1}B
 \end{pmatrix},
 \\
 M'*Y&=\begin{pmatrix}
    K^{-1}A^{-1} J^{-1} & K^{-1}A^{-1} J^{-1} JBL \\
   -UCK K^{-1}A^{-1}J^{-1} & UDL-UCK K^{-1}A^{-1}
   J^{-1} JBL 
 \end{pmatrix}\\
 &=
 \begin{pmatrix}
    K^{-1}(A^{-1}) J^{-1} & K^{-1}(A^{-1}B)L \\
    U(-CA^{-1})J^{-1} & U(D-CA^{-1}B)L 
   \end{pmatrix}.
 \end{align*}
 This lemma follows because we can set $J$, $K$, $L$, $U$ to be
 diagonal matrices with $\pm1$ on the diagonal entries
 and then $M'*Y$ can be obtained from $M*Y$ by negating some rows and columns.
\end{proof}

\subsection{Minors.}
Suppose that $(M,a,b)$ is a special matrix representation of a Lagrangian
chain-group $N$. We will find special matrix representations of minors of $N$.

\begin{LEM}\label{lem:minor}
Let $(M,a,b)$ be a  special matrix representation 
of a Lagrangian chain-group $N$ on $V$ to $K=\F^2$.
Let $v\in V$ and $T=V\setminus \{v\}$.
Suppose that $a(v)=\pm\tbinom10$. 
\begin{enumerate}
\item 
The triple $(M[T],a\cdot T, b\cdot T)$ is 
a special matrix representation of $N\delete \{v\}$.

\item
There is $Y\subseteq V$ such that
$M[Y]$ is nonsingular and 
  $(M'[T],a'\cdot T,b'\cdot T)$ is a special matrix representation of
  $N\contract \{v\}$, where
  \[
  M'=
  \begin{cases}
    M*Y & \text{if $\form{\,,\,}_K$ is symmetric,}\\
    (I_Y)(M*Y) & \text{if $\form{\,,\,}_K$ is skew-symmetric,}
  \end{cases}
  \]
  and $a'$ and $b'$ are given by Proposition~\ref{prop:chainpivot}.

\end{enumerate}
\end{LEM}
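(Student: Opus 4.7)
For part (1), I would verify directly that $(M[T],a\cdot T,b\cdot T)$ is a special matrix representation of $N\delete\{v\}$. The supplementary special chains $a$ and $b$ restrict to supplementary special chains on $T$, and $M[T]$ inherits (skew-)symmetry from $M$, so Proposition~\ref{prop:matrix2chain} gives a Lagrangian chain-group $N_1$ on $T$ spanned by $\tilde f_i=f_i\cdot T$ for $i\in T$, where $\{f_i:i\in V\}$ is the fundamental basis of $N$ associated with $(M,a,b)$. Using the hypothesis $a(v)=\pm\tbinom{1}{0}$, for each $i\in T$ we have $f_i(v)=m_{iv}a(v)\in\operatorname{span}(\tbinom{1}{0})$, so $\form{f_i(v),\tbinom{1}{0}}_K=\pm m_{iv}\form{\tbinom{1}{0},\tbinom{1}{0}}_K=0$ by isotropy and hence $\tilde f_i\in N\delete\{v\}$. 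Thus $N_1\subseteq N\delete\{v\}$. Since minors of Lagrangian chain-groups are Lagrangian (established earlier in the excerpt), $N\delete\{v\}$ is Lagrangian on $T$, hence $\dim N\delete\{v\}=|T|=\dim N_1$, forcing $N_1=N\delete\{v\}$.

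For part (2), my plan is to reduce to a variant of part (1) via a pivot. The obvious $\tbinom{0}{1}$-analog of (1)---that if a special matrix representation has $a(v)=\pm\tbinom{0}{1}$ then restricting to $T$ produces a special representation of $N\contract\{v\}$---holds by the same proof after swapping $\tbinom{1}{0}$, $\delete$, $\dup v$ for $\tbinom{0}{1}$, $\contract$, $\ddown v$ throughout. It therefore suffices to find a subset $Y$ with $M[Y]$ nonsingular so that the pivoted representation $(M',a',b')$ of Proposition~\ref{prop:chainpivot} satisfies $a'(v)=\pm\tbinom{0}{1}$; and by that proposition, this happens exactly when $v\in Y$. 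So the task becomes: find $Y\ni v$ with $M[Y]$ nonsingular.

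The main obstacle is exhibiting such a $Y$, and I expect the degenerate case to be the crux. If $m_{vv}\neq 0$ then $Y=\{v\}$ works; if $m_{vv}=0$ but $m_{vw}\neq 0$ for some $w\neq v$ then $Y=\{v,w\}$ works, because $\det M[\{v,w\}]=-m_{vw}m_{wv}$, which is nonzero since $m_{wv}=\pm m_{vw}$ by (skew-)symmetry. In the only remaining case the $v$-th row of $M$ is identically zero, and I can bypass the pivot by taking $Y=\emptyset$. Indeed, in that case $f_v(w)=m_{vw}a(w)=0$ for all $w\neq v$, so $f_v\cdot T=0$, and every $g=\sum c_if_i\in N$ has $g\cdot T=\sum_{i\in T}c_i(f_i\cdot T)$ independent of $c_v$ while $g(v)=c_vb(v)\in\operatorname{span}(\tbinom{0}{1})$; therefore $\form{g(v),\tbinom{0}{1}}_K=0$ automatically, $N\contract\{v\}=\{g\cdot T:g\in N\}=N\delete\{v\}$, and part (2) with $M'=M$, $a'=a$, $b'=b$ follows from part (1).
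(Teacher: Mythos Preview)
Your argument is correct and follows essentially the same route as the paper. For part (1) the paper argues linear independence of $\{f_i\cdot T:i\in T\}$ from the fact that $a$ is eulerian (so $\dup v\notin N$), while you get the dimension match by noting both sides are Lagrangian; these are equivalent. For part (2) the paper likewise splits into the degenerate case (row/column of $v$ zero, take $Y=\emptyset$) and the generic case (find $Y\ni v$ with $M[Y]$ nonsingular), and your explicit choice $Y=\{v\}$ or $Y=\{v,w\}$ just makes concrete what the paper leaves as ``because $M$ is skew-symmetric or symmetric.''
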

\begin{proof}
  Let $M=(m_{ij}:i,j\in V)$
  and for each $i\in V$,
  let $f_i\in N$ be a chain as it is defined in Proposition~\ref{prop:matrix2chain}.

  (1):
  We know that $f_i\cdot T\in N\delete\{v\}$ for all $i\neq v$.
  Since $a$ is eulerian, 
  $\dup v\notin N$
  and therefore $\{f_i\cdot T: i\in T\}$ is linearly
  independent.
  Then $\{f_i\cdot T: i\in T\}$  is a basis of $N\delete
  \{v\}$,
  because $\dim(N\delete \{v\})=|T|=|V|-1$.
  Now it is easy to verify that $(M[T],a\cdot T,b\cdot T)$ is a
  special matrix representation of $N\delete \{v\}$.
  (2):
 If $m_{iv}=m_{vi}=0$ for all $i\in V$, then we may
  simply replace $a(v)$ with $\pm\tbinom01$ and $b(v)$ with $\pm\tbinom10$
  without changing the Lagrangian chain-group $N$. In this case, we
  simply apply (1) to deduce that
  $Y=\emptyset$  works.

  Otherwise, there exists $Y\subseteq V$ such that $v\in Y$ and $M[Y]$
  is nonsingular because $M$ is skew-symmetric or symmetric. 
  We apply $M*Y$ to get $(M',a',b')$ as an alternative
  special matrix representation of $N$ by
  Proposition~\ref{prop:chainpivot}. Then $a'(v)=\pm\tbinom01$ and
  then we apply (1) to $(M',a',b')$.
\end{proof}
\begin{THM}\label{thm:minor}
  For $i=1,2$, let $M_i$ be a fundamental matrix of a Lagrangian
  chain-group $N_i$ on $V_i$ to $K=\F^2$.
  If $N_1$ is simply isomorphic to a minor of $N_2$, 
  then 
  $M_1$ is isomorphic to a principal submatrix of a matrix obtained
  from $M_2$ by 
  taking a pivot and 
  negating some rows and columns.
\end{THM}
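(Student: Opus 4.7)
The plan is to iterate Lemma~\ref{lem:minor} to build a fundamental matrix of $N_1$ from $M_2$ by a sequence of (pivot, negation, single-element deletion) operations, and then consolidate the entire sequence into a single pivot, a single negation, and a single principal-submatrix extraction.

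After absorbing the simple isomorphism from $N_1$ into a matrix isomorphism (which only relabels $V_1$), we may assume $N_1 = N_2 \contract X \delete Y$ for disjoint $X, Y \subseteq V_2$ with $V_1 = V_2 \setminus (X \cup Y)$. We process the elements of $X \cup Y$ one at a time, producing fundamental matrices $M^{(0)} = M_2, M^{(1)}, \ldots, M^{(k)}$ of the successive minors. To delete $v$ we apply Lemma~\ref{lem:minor}(1), preceded if necessary by a pivot via Proposition~\ref{prop:chainpivot} to bring the eulerian chain value $a^{(i-1)}(v)$ into $\{\pm\tbinom{1}{0}\}$; to contract we apply Lemma~\ref{lem:minor}(2), which is itself a pivot on some $Y_i' \ni v$ (possibly left-multiplied by $I_{Y_i'}$ in the symmetric-matrix case) followed by the deletion of $v$. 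Each step therefore has the form ``pivot by some $Y_i' \subseteq V^{(i-1)}$, negate rows and columns, and remove one element''.

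To collapse these into one top-level operation, we maintain the invariant $M^{(i)} = D_i\,(M_2 * Z_i)[V^{(i)}]\,D_i$ in the skew-symmetric case (with an additional $I_{Z_i}$ factor in the symmetric case), for some $Z_i \subseteq V_2$ with $M_2[Z_i]$ nonsingular and some $\pm 1$ diagonal $D_i$. Preservation rests on three identities: (a) $(M * Y)[U] = (M[U]) * Y$ whenever $Y \subseteq U$, which lifts each inner pivot $Y_i'$ back to a pivot on the ambient $M_2$; (b) $(M * Y_1) * Y_2 = M * (Y_1 \Delta Y_2)$, whenever the relevant submatrices are nonsingular, which merges successive pivots; and (c) $(D M D) * Y = D (M * Y) D$ for any $\pm 1$ diagonal $D$, together with Lemma~\ref{lem:negating}, which lets the sign matrices commute past every pivot and coalesce into a single $D_i$. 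The main technical subtlety is that in a contraction step the pivot set $Y_i'$ necessarily contains the very element $v_i$ about to be deleted, so pivots cannot simply be deferred past deletions; identity (a) circumvents this because $Y_i' \subseteq V^{(i-1)}$ is still part of the ambient ground set at the moment the pivot is performed, so the pivot lifts cleanly to $M_2$ and the subsequent removal of $v_i$ merely enlarges the complement of the final $V^{(k)}$. After $k$ steps we obtain a fundamental matrix $M^{(k)}$ of $N_1$ expressed as a principal submatrix on $V_1$ of a single pivot-and-negation of $M_2$; since $M_1$ is another fundamental matrix of $N_1$, Theorem~\ref{thm:equivmatrix} expresses $M_1$ as a further pivot-and-negation of $M^{(k)}$ on $V_1$, and one last application of (a)--(c) absorbs this correction into the running $Z_k$ and $D_k$, yielding $M_1$ (up to isomorphism) as a principal submatrix of a single pivot-and-negation of $M_2$.
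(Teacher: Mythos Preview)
Your proposal is correct and follows essentially the same route as the paper: reduce to $N_1$ an actual minor of $N_2$, iterate Lemma~\ref{lem:minor} to obtain a fundamental matrix $M'$ of $N_1$ as a principal submatrix of a pivot-and-negation of $M_2$, and then invoke Theorem~\ref{thm:equivmatrix} to pass from $M'$ to $M_1$. The paper's own proof is extremely terse at the consolidation step---it simply asserts that Lemmas~\ref{lem:negating} and~\ref{lem:minor} yield such an $M'$ and stops after quoting Theorem~\ref{thm:equivmatrix}---whereas you have written out the mechanism, namely the three identities (a) $(M*Y)[U]=(M[U])*Y$ for $Y\subseteq U$, (b) $(M*Y_1)*Y_2=M*(Y_1\Delta Y_2)$, and (c) commutation of $\pm1$ diagonals past pivots via Lemma~\ref{lem:negating}, that make the invariant $M^{(i)}=D_i(M_2*Z_i)[V^{(i)}]D_i$ (with the extra $I_{Z_i}$ in the symmetric case) survive each step and absorb the final Theorem~\ref{thm:equivmatrix} correction. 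So your argument is the paper's argument with the implicit bookkeeping made explicit; there is no genuinely different idea.
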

\begin{proof}
  Since $K$ is shared by $N_1$ and $N_2$, $M_1$ and $M_2$ are
  skew-symmetric if $\form{\,,\,}_K$ is symmetric
  and symmetric if $\form{\,,\,}_K$ is skew-symmetric.

  We may assume that $N_1$ is a minor of $N_2$
  and $V_1\subseteq V_2$.
  Then by Lemmas~\ref{lem:negating} and \ref{lem:minor}, 
  $N_1$ has a fundamental matrix $M'$
  that is a principal submatrix of a matrix
  obtained from $M$ by taking a pivot and negativing some rows
  if necessary.
  Then both $M'$ and $M_1$ are fundamental matrices of $N_1$.  By
  Theorem~\ref{thm:equivmatrix},  there is a method to get $M_1$ from
  $M'$ by applying a pivot and negating some rows and columns if
  necessary.
  %
\end{proof} 

\subsection{Representable Delta-matroids.}
Theorem~\ref{thm:tucker} implies the following proposition.
\begin{PROP}\label{prop:matdelta}
  Let $A$, $B$ be skew-symmetric or symmetric matrices over  a field $\F$.
  If $A$ is a principal submatrix of a matrix obtained from $B$
  by taking a pivot and negating some rows and columns, then 
  the delta-matroid $\MM(A)$ is a minor of $\MM(B)$.
\end{PROP}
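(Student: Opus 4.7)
The plan is to check three short facts, one for each kind of operation used to get $A$ from $B$, and then to compose them.

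First, I would observe that negating rows and columns of a matrix does not change its delta-matroid. Indeed, if $M' = I_X M I_Y$ for some $X, Y \subseteq V$, then for any $S \subseteq V$ we have $M'[S] = I_{X \cap S}\, M[S]\, I_{Y \cap S}$, so $\det M'[S] = (\pm 1)\det M[S]$; in particular $M'[S]$ is nonsingular if and only if $M[S]$ is nonsingular, giving $\FF(M') = \FF(M)$, hence $\MM(M') = \MM(M)$.

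Second, I would use Theorem~\ref{thm:tucker} to show that a pivot corresponds to twisting. If $B[Y]$ is nonsingular (so that $Y \in \FF(B)$), then Theorem~\ref{thm:tucker} tells us $\det (B*Y)[X] = \det B[Y \Delta X]/\det B[Y]$, so $(B*Y)[X]$ is nonsingular iff $B[Y \Delta X]$ is, i.e.\ $\FF(B*Y) = \FF(B) \Delta Y$, which means $\MM(B*Y) = \MM(B) \Delta Y$.

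Third, I would note that taking a principal submatrix corresponds to deletion: for $S \subseteq V$ and any $F \subseteq S$ we have $M[S][F] = M[F]$, so $F \in \FF(M[S])$ iff $F \in \FF(M)$, and therefore $\MM(M[S]) = \MM(M) \setminus (V \setminus S)$.

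Combining these, if $A$ is obtained from $B$ by first pivoting on some $Y$ with $B[Y]$ nonsingular, then negating some rows and columns to get a matrix $M$, and finally extracting the principal submatrix on $S$, we obtain
\[
\MM(A) = \MM(M[S]) = \MM(M) \setminus (V \setminus S) = \MM(B*Y) \setminus (V\setminus S) = \bigl(\MM(B) \Delta Y\bigr) \setminus (V \setminus S),
\]
which is a minor of $\MM(B)$ by definition. There is no real obstacle here; the proposition is essentially a translation between matrix operations and delta-matroid minor operations, with all the work carried by Theorem~\ref{thm:tucker}.
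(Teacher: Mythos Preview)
Your proof is correct and follows exactly the approach the paper intends: the paper simply states that Theorem~\ref{thm:tucker} implies the proposition (having already noted in Section~\ref{sec:prelim} that $\MM(A)\setminus X=\MM(A[V\setminus X])$ and $\MM(A)\Delta X=\MM(A*X)$), and your argument spells out precisely these translations together with the easy observation that negating rows and columns preserves $\FF(M)$.
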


Bouchet~\cite{Bouchet1988b} showed that there is a natural way to 
construct a delta-matroid from 
an isotropic chain-group.
\begin{THM}[Bouchet \cite{Bouchet1988b}]\label{thm:isodelta}
  Let $N$ be an isotropic chain-groups $N$ on $V$ to $K$. 
  Let $a$ and $b$  be supplementary chains on $V$ to $K$.
  Let \begin{align*}\FF=\{X\subseteq V:
    &\text{there is no non-zero chain }f\in N \\
    &\text{such that }
    \form{f(x),a(x)}_K=0 \text{ for all }x\in V\setminus X\\
    &\quad\text{ and }
    \form{f(x),b(x)}_K=0 \text{ for all }x\in X
    .\}
  \end{align*}
  Then, $\MM=(V,\FF)$ is a delta-matroid.
\end{THM}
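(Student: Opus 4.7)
The plan is to reformulate feasibility as a transversality condition between $N$ and a family of ``coordinate'' isotropic subspaces of~$K^V$, and then deduce the symmetric exchange axiom from the isotropy of~$N$ by a direct linear-algebra argument.

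For each $X\subseteq V$, I will let $W_X\subseteq K^V$ be the subspace of chains whose value at $v$ lies in $\F a(v)$ for $v\notin X$ and in $\F b(v)$ for $v\in X$. Since $a(v),b(v)$ are isotropic and cross terms over distinct coordinates vanish, $W_X$ is totally isotropic of dimension~$|V|$; and by Lemma~\ref{lem:basic} applied pointwise, the defining condition of $\FF$ collapses to $X\in\FF \iff N\cap W_X=\{0\}$. Equivalently, writing $a_X(v)=a(v)$ for $v\notin X$ and $a_X(v)=b(v)$ for $v\in X$, and $\phi_X\colon N\to\F^V$, $\phi_X(h)(v)=\form{h(v),a_X(v)}_K$, one has $X\in\FF$ iff $\phi_X$ is injective. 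Nonemptiness of~$\FF$ will follow from the previously proved existence of a special eulerian chain in~$N$ (applied, if necessary, after a pointwise orthogonal or symplectic change of basis of $K$ carrying $(a(v),b(v))$ to the standard basis; this preserves isotropy and does not affect the structure of $\FF$): the special eulerian chain directly exhibits an $X$ for which $\phi_X$ is injective.

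To prove the symmetric exchange axiom, I fix $F,F'\in\FF$ and $x\in F\Delta F'$. If $F\Delta\{x\}\in\FF$, take $y=x$; otherwise pick a nonzero $f\in N\cap W_{F\Delta\{x\}}$. Since $F\in\FF$, $f\notin W_F$, which forces $f(x)\neq 0$ because $W_F$ and $W_{F\Delta\{x\}}$ differ only in the coordinate at~$x$. Inspecting $\phi_{F'}(f)\in\F^V$ coordinatewise: the entry at $v=x$ vanishes (the argument lies in an isotropic line); at $v\notin F\Delta F'$ it vanishes (because $a_F(v)=a_{F'}(v)$ and $f(v)\in\F a_F(v)$); and at $v\in(F\Delta F')\setminus\{x\}$ it equals a nonzero scalar multiple of the coefficient $c_v$ given by $f(v)=c_v a_F(v)$. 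Injectivity of $\phi_{F'}$ (from $F'\in\FF$) then forces $\phi_{F'}(f)\neq 0$, so some $y\in(F\Delta F')\setminus\{x\}$ satisfies $f(y)\neq 0$.

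I claim $F\Delta\{x,y\}\in\FF$. If not, pick a nonzero $g\in N\cap W_{F\Delta\{x,y\}}$. Isotropy of~$N$ gives $\form{f,g}=0$, and pointwise every term in this sum vanishes except at $v=y$, where $f(y)$ and $g(y)$ lie in the two complementary isotropic lines of $K$ at~$y$ and contribute $\pm 1$ times the product of their scalar coefficients; since $f(y)\neq 0$, this forces $g(y)=0$. Then $g\in N\cap W_{F\Delta\{x\}}$ with $g(y)=0$, and repeating the first-step argument with $g$ in place of $f$ (using $F\in\FF$) yields $g(x)\neq 0$. Choose $\alpha\in\F$ with $f(x)=\alpha g(x)$ and put $h=f-\alpha g\in N$: then $h(x)=0$, $h(y)=f(y)\neq 0$, and $h(v)\in\F a_F(v)$ at every other~$v$, so $h$ is a nonzero element of $N\cap W_F$, contradicting $F\in\FF$. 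The main obstacle is the pointwise bookkeeping in the isotropy computation --- correctly identifying which of the two complementary isotropic lines contains $f(v)$ and $g(v)$ at each coordinate --- but once this is organized, SEA follows from a single application of isotropy combined with a one-parameter modification.
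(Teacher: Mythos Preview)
The paper does not supply a proof of this theorem; it is quoted as a result of Bouchet with a citation and used without argument. There is therefore no proof in the paper to compare against. Your proposal is a correct, self-contained proof.

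A brief check of the key steps confirms this. The reformulation $X\in\FF \iff N\cap W_X=\{0\}$ is precisely Lemma~\ref{lem:basic} applied coordinatewise, and it makes the SEA argument transparent. The crucial isotropy computation is that $W_{F\Delta\{x\}}$ and $W_{F\Delta\{x,y\}}$ coincide at every coordinate except~$y$, so for $f\in N\cap W_{F\Delta\{x\}}$ and $g\in N\cap W_{F\Delta\{x,y\}}$ one has $\form{f,g}=\form{f(y),g(y)}_K$; since $f(y)$ and $g(y)$ lie in the two complementary isotropic lines at~$y$ and these pair nondegenerately (the supplementary condition gives $\form{a(y),b(y)}_K=1$), $f(y)\neq 0$ forces $g(y)=0$. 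After that, $g\in N\cap W_{F\Delta\{x\}}$ with $g(x)\neq 0$ (else $g\in N\cap W_F=\{0\}$), so $f(x)$ and $g(x)$ lie in the same one-dimensional line and the cancellation $h=f-\alpha g$ produces a nonzero element of $N\cap W_F$, the desired contradiction.

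Your handling of nonemptiness is also correct: since $a(v),b(v)$ form a hyperbolic pair at each~$v$, there is a coordinatewise isometry of~$K$ carrying $(a(v),b(v))$ to $(\tbinom10,\tbinom01)$; this preserves isotropy of~$N$ and identifies~$\FF$ with the feasibility system for the standard supplementary pair, where the existence of a special eulerian chain (the unlabeled Proposition just before Proposition~\ref{prop:chain2matrix}) directly exhibits a feasible set.
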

The triple $(N,a,b)$ given as above is called  the \emph{chain-group representation} of the
delta-matroid~$\MM$.
In addition, if $a(v), b(v)\in \{\pm\tbinom10,\pm\tbinom01\}$, then
$(N,a,b)$ is called the \emph{special chain-group representation} of $\MM$.

We remind you that a delta-matroid $\MM$ is representable over a field $\F$
if $\MM=\MM(A)\Delta Y$ for some skew-symmetric or symmetric 
$V\times V$ matrix
$A$ over $\F$  and a subset $Y$ of $V$
where
$\MM(A)=(V,\FF)$ where $\FF=\{Y: \text{ $A[Y]$ is nonsingular}\}$.

Suppose that $N$ is a Lagrangian chain-group represented by a special matrix
representation $(M,a,b)$.
Then $(N,a,b)$ induces a delta-matroid $\MM$ by the above theorem.
Proposition~\ref{prop:eulerian} characterizes all the special eulerian
chains in terms of the singularity of $M[Y]$
and special eulerian chains coincide with the feasible sets of $\MM$
given by Theorem~\ref{thm:isodelta}.
In other words, $Y$ is feasible in $\MM$ if and only if a chain $a'$
is special eulerian in $N$ when 
$a(v)=a'(v)$ if $v\in Y$ and $a'(v)=b(v)$ if
$v\notin Y$.

Then twisting operations $\MM\Delta Y$ on delta-matroids can be simulated 
by swapping supplementary chains $a(x)$ and $b(x)$ for $x\in Y$
in the chain-group representation as
it is in Proposition~\ref{prop:chainpivot}.
Thus we can alternatively define representable delta-matroids as
follows.
\begin{THM}\label{thm:repdelta}
  A delta-matroid on $V$ is representable over a field~$\F$
  if and only if
  it admits a special chain-group representation $(N,a,b)$ for a Lagrangian
  chain-group $N$  on $V$ to $K=\F^2$
  and special supplementary chains $a$, $b$ on $V$ to $K$
  where $\form{\,,\,}_K$ is either skew-symmetric or symmetric.
\end{THM}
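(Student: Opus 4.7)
The plan is to prove the two directions by translating between matrices and Lagrangian chain-group representations, using the correspondence between feasibility in $\MM$ (as defined in Theorem~\ref{thm:isodelta}) and the special eulerian chains of $N$ (as characterized by Proposition~\ref{prop:eulerian}).

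For the forward direction, suppose $\MM = \MM(A)\Delta Y_0$ for some skew-symmetric or symmetric $V\times V$ matrix $A$ over $\F$. I would choose $\form{\,,\,}_K$ to have the opposite symmetry type to $A$, set the canonical supplementary chains $a(v)=\tbinom10$ and $b(v)=\tbinom01$, and invoke Proposition~\ref{prop:matrix2chain} to obtain a Lagrangian chain-group $N$ with special matrix representation $(A,a,b)$. Combining Theorem~\ref{thm:isodelta} with Proposition~\ref{prop:eulerian} shows that the delta-matroid induced by $(N,a,b)$ has feasible sets exactly $\FF(A)$, so $(N,a,b)$ is a special chain-group representation of $\MM(A)$. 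To account for the twist by $Y_0$, interchange $a(v)$ and $b(v)$ for each $v\in Y_0$, with the sign adjustments prescribed in Proposition~\ref{prop:chainpivot} so that the resulting chains $a'$, $b'$ remain supplementary. A direct unpacking of the feasibility condition in Theorem~\ref{thm:isodelta} shows that this swap twists the induced delta-matroid by $Y_0$, so $(N,a',b')$ is a special chain-group representation of $\MM$.

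For the backward direction, suppose $(N,a,b)$ is a special chain-group representation of $\MM$. Since $\FF$ is nonempty, choose any feasible set $Y$. Define the swapped special supplementary pair $a_Y$, $b_Y$ by setting $a_Y(v)=b(v)$, $b_Y(v)=\pm a(v)$ on $Y$ (choosing the sign so supplementarity holds) and leaving them unchanged off $Y$. Reading the condition defining feasibility in Theorem~\ref{thm:isodelta}, the feasibility of $Y$ with respect to $(N,a,b)$ is literally the statement that $a_Y$ is a special eulerian chain of $N$. Now apply Proposition~\ref{prop:chain2matrix} to $(N,a_Y)$ to produce a skew-symmetric or symmetric matrix $M$ with $(M,a_Y,b_Y)$ a special matrix representation of $N$. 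Proposition~\ref{prop:eulerian} then identifies the feasible sets of the delta-matroid arising from $(N,a_Y,b_Y)$ with $\FF(M)$, i.e.\ with $\MM(M)$; the same swap-vs-twist identity as in the forward direction identifies that delta-matroid with $\MM\Delta Y$. Consequently $\MM = \MM(M)\Delta Y$, witnessing that $\MM$ is $\F$-representable.

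The conceptual content is light: the one-line slogan is that swapping $a$ and $b$ on a set $Y$ corresponds to twisting the delta-matroid by $Y$, and this dictionary converts a matrix representation into a chain-group representation and back. The main obstacles are bookkeeping. First, one must check carefully the sign correction in the skew-symmetric form case so that $a_Y$ and $b_Y$ remain supplementary; this is already embedded in Proposition~\ref{prop:chainpivot} and should be invoked rather than redone. Second, in the backward direction one must handle the special case of Proposition~\ref{prop:chain2matrix}(2) where $\form{\,,\,}_K$ is symmetric in characteristic~$2$; here one needs to verify that the fundamental basis arising from $a_Y$ satisfies $f_v(v)=b_Y(v)$, which is automatic once $b_Y$ is taken to be the chain defined by the fundamental basis itself. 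Apart from these verifications, the proof is a direct assembly of the propositions of this section.
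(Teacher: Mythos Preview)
Your proposal is correct and follows essentially the same route as the paper: the discussion immediately preceding the theorem is the paper's argument, and it too rests on the dictionary between feasibility in Theorem~\ref{thm:isodelta}, special eulerian chains via Proposition~\ref{prop:eulerian}, and the matrix--chain-group correspondence of Propositions~\ref{prop:matrix2chain} and~\ref{prop:chain2matrix}, together with the observation that swapping $a$ and $b$ on a subset implements twisting. Your treatment is in fact more explicit than the paper's, including the handling of the characteristic~$2$ subtlety in Proposition~\ref{prop:chain2matrix}(2).
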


\subsection{Connectivity.}
When the rank-width of matrices is defined, the function $\rk
M[X,V\setminus X]$ is used to describe how complex the connection
between $X$ and $V\setminus X$ is.
In this subsection, we express $\rk M[X,V\setminus X]$ in terms of a Lagrangian chain-group
represented by $M$.
\begin{THM}\label{thm:connectivity}
  Let $M$ be a skew-symmetric or symmetric $V\times V$ matrix over a
  field $\F$.
  Let $N$ be a Lagrangian chain-group on $V$ to $K=\F^2$ such that
  $(M,a,b)$ is a matrix representation of $N$ 
  with supplementary chains $a$ and $b$ on $V$ to $K$.
  Then, 
  \[
  \rk M[X,V\setminus X]=\lambda_N(X)=|X|-\dim(N\times X).
  \]
\end{THM}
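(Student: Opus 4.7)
The second equality is exactly Lemma~\ref{lem:even}, so the real content is $\rk M[X,V\setminus X]=|X|-\dim(N\times X)$, and I would prove this by directly identifying $N\times X$ with the left null space of $M[X,V\setminus X]$ via the fundamental basis.

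The plan is as follows. By Proposition~\ref{prop:matrix2chain}, the matrix representation $(M,a,b)$ of $N$ produces chains $\{f_i:i\in V\}$ which form a basis of $N$, with $f_i(i)=m_{ii}a(i)+b(i)$ and $f_i(j)=m_{ij}a(j)$ for $j\neq i$. For an arbitrary chain $f=\sum_{i\in V}c_i f_i\in N$, grouping coefficients at each $j\in V$ gives
\[
f(j)=\Bigl(\sum_{i\in V}c_i m_{ij}\Bigr)a(j)+c_j\,b(j).
\]
Because $a$ and $b$ are supplementary, $\form{a(j),b(j)}_K=1$ forces $a(j)$ and $b(j)$ to be linearly independent in $K$, so $f(j)=0$ holds if and only if $c_j=0$ and $\sum_{i\in V}c_i m_{ij}=0$.

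Now I would apply this pointwise to characterize membership in $N\times X$. Asking $f(j)=0$ for every $j\in V\setminus X$ forces $c_j=0$ for all $j\in V\setminus X$; the surviving coefficients $(c_i:i\in X)$ must then satisfy $\sum_{i\in X}c_i m_{ij}=0$ for every $j\in V\setminus X$, i.e.\ the row vector $(c_i:i\in X)$ lies in the left null space of the submatrix $M[X,V\setminus X]$. Conversely any such vector yields a chain in $N\times X$, so the map $(c_i:i\in X)\mapsto\sum_{i\in X}c_i f_i$ is a linear isomorphism from the left null space of $M[X,V\setminus X]$ onto $N\times X$. The rank-nullity theorem then gives
\[
\dim(N\times X)=|X|-\rk M[X,V\setminus X],
\]
and combining with Lemma~\ref{lem:even} finishes the proof.

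There is no real obstacle here; the only thing one must be careful about is the claim that the coefficients $c_j$ are forced to be zero for $j\in V\setminus X$, which uses the supplementary condition to guarantee that $a(j)$ and $b(j)$ span $K$. Note also that the argument does not use whether $\form{\,,\,}_K$ is symmetric or skew-symmetric, so the statement for symmetric and skew-symmetric $M$ is handled uniformly.
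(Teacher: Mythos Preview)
Your proof is correct and follows essentially the same approach as the paper: both express an arbitrary chain in $N$ via the fundamental basis $\{f_i\}$, observe that vanishing on $V\setminus X$ forces $c_j=0$ for $j\in V\setminus X$ and reduces the remaining conditions to the left null space of $M[X,V\setminus X]$, and conclude by rank--nullity together with Lemma~\ref{lem:even}. The only cosmetic point is that your map $(c_i:i\in X)\mapsto\sum_{i\in X}c_i f_i$ lands in $\{f\in N:f\cdot(V\setminus X)=0\}$ rather than in $N\times X$ itself, but restriction to $X$ gives the obvious isomorphism.
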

\begin{proof}
  Let $M=(m_{ij}:i,j\in V)$. As we described in 
  Proposition~\ref{prop:matrix2chain}, we let $f_i(j)=m_{ij}a(j)$
  if $j\in V\setminus \{i\}$
  and $f_i(i)=m_{ii}+b(i)$.
 We know that $\{f_i:i\in V\}$ is a fundamental basis of $N$.
  Let $A=M[X,V\setminus X]$.
  We have
  $  \rk A=\rk A^t=|X|-\operatorname{nullity}(A^t)$,
  where the \emph{nullity} of $A^t$ is  $\dim(\{ x\in \F^X: A^t x=0\})$,
  that is eqaul to $ \dim(\{x\in \F^X: x^t A=0\} ) $.
  
  Let $\varphi:\F^V\rightarrow N$ be a linear transformation with 
  $\varphi(p)= \sum_{v\in V} p(v)f_v$. 
  Then, $\varphi$ is an isomorphism and therefore we have the
  following:
  \begin{align*}
    \dim(N\times X) 
    &= \dim(\{ y\in N:  y(j)=0\text{ for all }j\in V\setminus X\})\\
    &= \dim(\varphi^{-1}(
    \{ y\in N:  y(j)=0\text{ for all }j\in V\setminus X\}
    ))\\
    &= \dim(\{  x\in \F^V:
   \sum_{i\in V} x(i) f_i(j) = 0 \text{ for all } j\in V\setminus
    X\})\\
    &= \dim (\{x\in \F^X: 
    \sum_{i\in X} x(i) m_{ij} =0 \text{ for all }j\in V\setminus X
    \})\\
    &= \dim( \{x\in \F^X:
    x^t A = 0 \})\\
    &= \operatorname{nullity}(A^t).
  \end{align*}
  We deduce that $\rk A=|X|-\dim(N\times X)$.
\end{proof}
The above theorem gives the following corollaries.
\begin{COR}
  Let $\F$ be a field and let $N$ be a Lagrangian chain-group on $V$ to $K=\F^2$.
  If $M_1$ and $M_2$ are two fundamental matrices of $N$, 
  then 
  $\rk M_1[X,V\setminus X]=\rk M_2[X,V\setminus X]$ for all
  $X\subseteq V$.
\end{COR}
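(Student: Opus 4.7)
The corollary is an immediate consequence of Theorem~\ref{thm:connectivity}, so my plan is essentially to unpack the definitions and invoke that theorem twice. The key observation is that the right-hand side of the identity in Theorem~\ref{thm:connectivity}, namely $\lambda_N(X) = |X| - \dim(N \times X)$, depends only on the Lagrangian chain-group $N$ and the subset $X$, not on any choice of matrix representation.

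Concretely, here is how I would proceed. Since $M_1$ is a fundamental matrix of $N$, by definition there exist supplementary chains $a_1$, $b_1$ on $V$ to $K$ such that $(M_1, a_1, b_1)$ is a special (hence general) matrix representation of $N$. Applying Theorem~\ref{thm:connectivity} to this triple yields
\[
\rk M_1[X, V \setminus X] = \lambda_N(X).
\]
Likewise, $M_2$ being a fundamental matrix of $N$ yields supplementary chains $a_2$, $b_2$ such that $(M_2, a_2, b_2)$ is a matrix representation of $N$, and a second application of Theorem~\ref{thm:connectivity} gives
\[
\rk M_2[X, V \setminus X] = \lambda_N(X).
\]
Combining these two equalities immediately gives $\rk M_1[X, V\setminus X] = \rk M_2[X, V\setminus X]$ for every $X \subseteq V$.

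There is essentially no obstacle here: all of the substantive work was already done inside the proof of Theorem~\ref{thm:connectivity}, which showed that the left-hand rank depends only on the intrinsic quantity $\dim(N\times X)$. The role of this corollary is simply to record the invariance that follows, namely that all fundamental matrices of a single Lagrangian chain-group share the same ``cut rank'' function, which is exactly what is needed to make rank-width of a matrix a meaningful invariant of the underlying chain-group and justifies passing between the matrix and chain-group viewpoints in later sections.
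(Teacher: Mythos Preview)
Your proposal is correct and matches the paper's approach exactly: the paper presents this corollary as an immediate consequence of Theorem~\ref{thm:connectivity} without further proof, and your argument---applying that theorem once for each fundamental matrix and observing that both ranks equal the intrinsic quantity $\lambda_N(X)$---is precisely the intended justification.
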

\begin{COR}
  Let $M$ be a skew-symmetric or symmetric $V\times V$ matrix over a
  field $\F$.
  Let $N$ be a Lagrangian chain-group on $V$ to $K=\F^2$ such that
  $(N,a,b)$ is a matrix representation of $N$.
  Then the rank-width of $M$ is equal 
  to the branch-width of $N$.
\end{COR}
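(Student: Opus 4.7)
The plan is to observe that the corollary is an essentially immediate consequence of Theorem~\ref{thm:connectivity}. The key point is that a rank-decomposition of $M$ and a branch-decomposition of $N$ are literally \emph{the same combinatorial object}: in both cases the decomposition is a pair $(T,\mathcal L)$ consisting of a subcubic tree $T$ and a bijection $\mathcal L$ from $V$ to the leaves of $T$. The only thing that changes between the two notions is how we measure the width of an edge $e$. For a rank-decomposition of $M$ the width is $\rk M[\mathcal L^{-1}(X_e), \mathcal L^{-1}(Y_e)]$, while for a branch-decomposition of $N$ it is $\lambda_N(\mathcal L^{-1}(X_e))$.

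The argument I would write is the following. First, apply Theorem~\ref{thm:connectivity} to obtain, for every $X\subseteq V$, the identity
\[
\rk M[X,V\setminus X]=\lambda_N(X).
\]
Specialising to $X=\mathcal L^{-1}(X_e)$ shows that, for any pair $(T,\mathcal L)$ and any edge $e$ of $T$, the width of $e$ computed as an edge of a rank-decomposition of $M$ coincides with its width computed as an edge of a branch-decomposition of $N$. Hence the maximum over edges agrees in the two settings for every $(T,\mathcal L)$, and minimising over all $(T,\mathcal L)$ yields $\rwd(M)=\bw(N)$. The degenerate case $|V|\le 1$ is handled by the convention that both quantities equal $0$.

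There is no real obstacle; all the work has been done already in Theorem~\ref{thm:connectivity}. The only mild subtlety worth pointing out is that a matrix representation $(M,a,b)$ of $N$ is hypothesised to exist, so Theorem~\ref{thm:connectivity} is directly applicable; no conversion between general and special matrix representations is needed.
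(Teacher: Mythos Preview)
Your proposal is correct and matches the paper's approach: the paper states this corollary without proof as an immediate consequence of Theorem~\ref{thm:connectivity}, and your argument spells out exactly the intended one-line deduction that the two notions of decomposition coincide combinatorially while the edge-widths agree by the theorem.
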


\section{Generalization of Tutte's linking theorem}\label{sec:tutte}
We prove an analogue of Tutte's linking theorem \cite{Tutte1965} 
for Lagrangian chain-groups.
Tutte's linking theorem is a generalization of Menger's theorem of
graphs to matroids.
Robertson and Seymour~\cite{RS1990} uses Menger's theorem extensively
for proving well-quasi-ordering of graphs of bounded tree-width. When
generalizing this result to matroids, Geelen, Gerards, and
Whittle~\cite{GGW2002} used Tutte's linking theorem for matroids. 
To further generalize this to
Lagrangian chain-groups, we will need a generalization of Tutte's
linking theorem for Lagrangian chain-groups.

A crucial step for proving this is to ensure that the connectivity
function 
behaves nicely
on one of two minors $N\delete \{v\}$ and $N\contract
\{v\}$ of a Lagrangian chain-group $N$. 
The following inequality was observed by Bixby~\cite{Bixby1982} for matroids.
\begin{PROP}\label{prop:tutte}
  Let $v\in V$. Let $N$ be a chain-group on $V$ to $K=\F^2$ and let
  $X,Y\subseteq V\setminus\{v\}$. Then,
  \[
  \lambda_{N\delete\{v\}}(X)+\lambda_{N\contract\{v\}}(Y)
  \ge \lambda_N(X\cap Y)+\lambda_N(X\cup Y\cup \{v\})-1.
  \]
\end{PROP}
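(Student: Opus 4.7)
The plan is to reduce the inequality to a purely dimension-counting statement inside $N$ and the two-dimensional space $K=\F^2$. For each $S\subseteq V\setminus\{v\}$, let
\[
R_S=\{f(v):f\in N,\ f(w)=0 \text{ for all } w\notin S\cup\{v\}\}\subseteq K,
\]
and set $m(T)=\dim(N\times T)$; note that $m(S\cup\{v\})-m(S)=\dim R_S$ whenever $v\notin S$.

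First I would refine the argument of Proposition~\ref{prop:minordim} to obtain, for every $S\subseteq V\setminus\{v\}$, the explicit formula
\[
\dim((N\delete\{v\})\times S)=m(S)+\dim\bigl(R_S\cap\F\tbinom{1}{0}\bigr)-[\dup v\in N],
\]
together with the analogous identity for $N\contract\{v\}$ involving $\F\tbinom{0}{1}$ and $\ddown v$. Feeding these (and the corresponding formulas for $\dim(N\delete\{v\})$ and $\dim(N\contract\{v\})$) into the definition of $2\lambda_{N\delete\{v\}}(X)$ and $2\lambda_{N\contract\{v\}}(Y)$, I can express both quantities in closed form in terms of the four values $m(X\cup\{v\}),m(V\setminus X),m(Y\cup\{v\}),m(V\setminus Y)$, the flags $[\dup v\in N]$ and $[\ddown v\in N]$, and the dimensions of the subspaces $R_S$ together with their intersections with the two coordinate lines of $K$.

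Next I would apply the supermodular inequality of Lemma~\ref{lem:submodular} twice: once to $(X\cup\{v\},Y\cup\{v\})$, whose union is $X\cup Y\cup\{v\}$ and whose intersection is $(X\cap Y)\cup\{v\}$ (using $v\notin X\cup Y$), and once to $(V\setminus X,V\setminus Y)$, whose union is $V\setminus(X\cap Y)$ and whose intersection is $V\setminus(X\cup Y)=(V\setminus(X\cup Y\cup\{v\}))\cup\{v\}$. Combining these with the identity $m(T\cup\{v\})-m(T)=\dim R_T$ and with the closed-form expressions from the previous step, the target inequality reduces to the single combinatorial estimate
\[
\dim R_{X\cap Y}+\dim R_{V\setminus(X\cup Y\cup\{v\})}\le C+2,
\]
where $C$ is a non-negative linear combination of $[\dup v\in N]$, $[\ddown v\in N]$, and the ``defects'' $\dim R_S-\dim(R_S\cap\F\tbinom{1}{0})-\dim(R_S\cap\F\tbinom{0}{1})$ for $S\in\{X,Y,V\setminus\{v\}\setminus X,V\setminus\{v\}\setminus Y\}$.

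The main obstacle is establishing this residual bound, and I would address it using two ingredients. First, since $N\times((X\cap Y)\cup\{v\})$ embeds into both $N\times(X\cup\{v\})$ and $N\times(Y\cup\{v\})$, one has $R_{X\cap Y}\subseteq R_X\cap R_Y$, and analogously $R_{V\setminus(X\cup Y\cup\{v\})}\subseteq R_Z\cap R_W$ with $Z=V\setminus\{v\}\setminus X$ and $W=V\setminus\{v\}\setminus Y$. Second, for any subspace $U\subseteq K$, the defect $\dim U-\dim(U\cap\F\tbinom{1}{0})-\dim(U\cap\F\tbinom{0}{1})$ lies in $\{0,1\}$ and equals $1$ only when $U$ is a one-dimensional line meeting neither coordinate axis; moreover, $\dup v\in N$ forces $\tbinom{1}{0}\in R_S$ for every $S$, and $\ddown v\in N$ forces $\tbinom{0}{1}\in R_S$ for every $S$. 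A short case analysis on the configurations of $R_X,R_Y,R_Z,R_W,R_{V\setminus\{v\}}$ inside $K$ and on which of $\dup v,\ddown v$ lie in $N$ then delivers the required bound of $2$. The ``$-1$'' on the right-hand side of the stated inequality is precisely the slack needed to absorb a potential one-dimensional diagonal $R_{V\setminus\{v\}}$, and the inequality becomes tight in that situation.
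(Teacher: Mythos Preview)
Your strategy is sound and does lead to a proof, but it differs from the paper's route. A small correction first: after you substitute your formula $\dim((N\delete\{v\})\times S)=m(S)+\dim(R_S\cap\F\tbinom10)-[\dup v\in N]$ (and its analogue) and apply Lemma~\ref{lem:submodular} to the pairs $(X\cup\{v\},Y\cup\{v\})$ and $(V\setminus X,V\setminus Y)$, the residual quantity is not a combination of the \emph{symmetric} defects $\dim R_S-\alpha_S-\beta_S$ as you wrote; what actually appears is the asymmetric pair $r_S-\alpha_S$ for $S\in\{X,X'\}$ and $r_S-\beta_S$ for $S\in\{Y,Y'\}$, together with the term $\alpha_{V\setminus\{v\}}+\beta_{V\setminus\{v\}}-2r_{V\setminus\{v\}}$. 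With this corrected bookkeeping the final estimate is immediate: from $R_{X\cap Y}\subseteq R_X\cap R_Y$ one gets $\dim R_{X\cap Y}\le (r_X-\alpha_X)+(r_Y-\beta_Y)$ (check the three cases $r_X-\alpha_X,r_Y-\beta_Y\in\{0,1\}$), and similarly for $X'\cap Y'$; the remaining term is bounded by $2r_{V\setminus\{v\}}-\alpha_{V\setminus\{v\}}-\beta_{V\setminus\{v\}}\le 2$, which exactly absorbs the ``$+2$''. No further case analysis on $a,b$ is needed.

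The paper takes a different route. It isolates a mixed supermodular inequality (Lemma~\ref{lem:tutte1}), comparing $\dim(N\times(X\cap Y))+\dim(N\times(X\cup Y\cup\{v\}))$ directly with $\dim((N\delete\{v\})\times X)+\dim((N\contract\{v\})\times Y)$, proved by the standard $\dim(N_1+N_2)+\dim(N_1\cap N_2)=\dim N_1+\dim N_2$ argument, together with a strictness clause when $\dup v\in N$ or $\ddown v\in N$. It then exploits the duality $\lambda_N=\lambda_{N^\bot}$ and Theorem~\ref{thm:delcondual} to normalise so that $\dim N-\dim(N\delete\{v\})\in\{0,1\}$, after which two applications of the lemma plus the strictness clause finish the job. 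Your approach trades this duality trick and the separate lemma for a single explicit computation inside $K$; it is more hands-on but entirely self-contained, whereas the paper's argument is shorter once Lemma~\ref{lem:tutte1} is in place and makes the role of $N\leftrightarrow N^\bot$ transparent.
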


We first prove the following lemma for the above proposition.
\begin{LEM}\label{lem:tutte1}
  Let $v\in V$.
  Let $N$ be a chain-group on $V$ to $K=\F^2$ and 
  let $X,Y\subseteq V\setminus\{v\}$. Then,
  \begin{multline*}
  \dim(N\times (X\cap Y))
  +\dim(N\times (X\cup Y\cup \{v\}))\\
  \ge 
  \dim((N\delete \{v\}) \times X)
  +\dim((N\contract \{v\})\times Y).
\end{multline*}
  Moreover, the equality does not hold if $\dup v\in N$ or $\ddown
  v\in N$.
\end{LEM}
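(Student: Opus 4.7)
The plan is to compute both sides as dimensions of explicit subspaces of $N$ and then apply the standard identity $\dim(A)+\dim(B)=\dim(A+B)+\dim(A\cap B)$.

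First I would set up the following subspaces of $N$. For a subset $T\subseteq V$, write $P_T=\{f\in N:f(x)=0\text{ for all }x\in V\setminus T\}$, so the restriction map $f\mapsto f\cdot T$ is injective on $P_T$ and identifies $P_T$ with $N\times T$. In particular,
\[
\dim(N\times(X\cap Y))=\dim(P_{X\cap Y}),\qquad \dim(N\times(X\cup Y\cup\{v\}))=\dim(P_{X\cup Y\cup\{v\}}).
\]
For the right-hand side, let $A=\{f\in N:f(x)=0\text{ for all }x\in V\setminus(X\cup\{v\}),\ \form{f(v),\tbinom10}_K=0\}$ and $B=\{f\in N:f(x)=0\text{ for all }x\in V\setminus(Y\cup\{v\}),\ \form{f(v),\tbinom01}_K=0\}$. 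Then the restriction $f\mapsto f\cdot X$ maps $A$ onto $(N\delete\{v\})\times X$. Its kernel consists of chains in $A$ supported on $\{v\}$ with $\form{f(v),\tbinom10}_K=0$; by a direct check these are exactly the scalar multiples of $\dup v$ lying in $N$. Hence
\[
\dim((N\delete\{v\})\times X)=\dim(A)-\epsilon_1,
\]
where $\epsilon_1=1$ if $\dup v\in N$ and $\epsilon_1=0$ otherwise. Symmetrically, $\dim((N\contract\{v\})\times Y)=\dim(B)-\epsilon_2$ with $\epsilon_2=1$ iff $\ddown v\in N$.

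Next I would establish the two key containments. For $A\cap B$, the support conditions combine to $f(x)=0$ on $V\setminus((X\cap Y)\cup\{v\})$, while the two orthogonality conditions at $v$ force $f(v)=0$ (since $\{\tbinom10,\tbinom01\}$ spans $K$ and $\form{\,,\,}_K$ is non-degenerate). Therefore $A\cap B=P_{X\cap Y}$. For the sum, both $A$ and $B$ are contained in the space of chains of $N$ supported on $X\cup Y\cup\{v\}$, so $A+B\subseteq P_{X\cup Y\cup\{v\}}$.

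Finally I would apply the dimension formula:
\[
\dim(A)+\dim(B)=\dim(A+B)+\dim(A\cap B)\le \dim(P_{X\cup Y\cup\{v\}})+\dim(P_{X\cap Y}).
\]
Substituting the identifications above and rearranging yields
\[
\dim(N\times(X\cap Y))+\dim(N\times(X\cup Y\cup\{v\}))\ge \dim((N\delete\{v\})\times X)+\dim((N\contract\{v\})\times Y)+\epsilon_1+\epsilon_2,
\]
which gives the inequality. The moreover clause follows because if $\dup v\in N$ or $\ddown v\in N$ then $\epsilon_1+\epsilon_2\ge 1$, producing strict inequality.

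The main (and essentially only) obstacle is bookkeeping: correctly identifying $A\cap B=P_{X\cap Y}$ requires using the non-degeneracy of $\form{\,,\,}_K$ at the single coordinate $v$, and correctly identifying the kernels of the restriction maps from $A$ and $B$ requires recognizing that the ``isotropic'' condition at $v$ singles out exactly one line ($\F\dup v$ or $\F\ddown v$). Once these two observations are in hand, the result is a one-line application of the standard inclusion/sum dimension identity.
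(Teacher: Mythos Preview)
Your proof is correct and essentially identical to the paper's: the paper defines the same subspaces $N_1=A$ and $N_2=B$ (after a harmless WLOG reduction to $V=X\cup Y\cup\{v\}$), identifies $N_1\cap N_2$ with $N\times(X\cap Y)$ via the same non-degeneracy argument at $v$, uses $N_1+N_2\subseteq N$, and applies the dimension identity. Your treatment is slightly more explicit in tracking the kernels $\F\dup v\cap N$ and $\F\ddown v\cap N$, which is exactly how the paper handles the ``moreover'' clause.
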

\begin{proof}
  We may assume that $V=X\cup Y\cup \{v\}$.
  Let 
  \begin{align*}
    N_1&=\left\{f\in N: \form{f(v),\tbinom 1 0}_K=0,
    f(x)=0 \text{ for all }x\in V\setminus X\setminus\{v\}\right\},\\
    N_2&=\left\{f\in N: \form{f(v),\tbinom 0 1}_K=0,
    f(x)=0 \text{ for all }x\in V\setminus Y\setminus\{v\}\right\}.
  \end{align*}
  We use the fact that
  $\dim(N_1+N_2)+\dim(N_1\cap N_2)=\dim(N_1)+\dim(N_2)$.
  It is easy to see that if $f\in N_1\cap N_2$, then $f(v)=0$ and
  therefore $(N_1\cap N_2)\cdot (X\cap Y)=N\times (X\cap Y)$ and
  $\dim(N_1\cap N_2)=\dim (N\times (X\cap Y))$.
  Moreover, $N_1+N_2\subseteq N$ and therefore $\dim(N)\ge
  \dim(N_1+N_2)$.
  It is clear that $\dim(N\delete\{v\}\times X)\le \dim N_1$
  and $\dim(N\contract \{v\}\times X)\le \dim N_2$.
  Therefore we conclude that 
  $\dim(N\times (X\cap Y))+\dim N\ge \dim(N\delete \{v\}\times X)
  +\dim(N\contract \{v\}\times Y)$.

  If $\dup v\in N$, then $\dim(N\delete\{v\}\times X)<\dim N_1$ and
  therefore the equality does not hold. Similarly if $\ddown v\in N$,
  then the equality does not hold as well.
\end{proof}
\begin{proof}[Proof of Proposition~\ref{prop:tutte}]
  Since $N$ and $N^\bot$ have the same connectivity function $\lambda$
  and
  $N^\bot\delete\{v\}=(N\delete\{v\})^\bot$, 
  $N^\bot\contract\{v\}=(N\contract\{v\})^\bot$, 
  (Lemma~\ref{thm:delcondual}),
  we may assume that $\dim N-\dim(N\delete \{v\})\in \{0,1\}$ (Proposition~\ref{prop:minordim}) 
  by replacing $N$ by $N^\bot$ if necessary.
  Let $X'=V\setminus X\setminus\{v\}$ and $Y'=V\setminus
  Y\setminus\{v\}$.
  We recall that 
  \begin{align*}
    2\lambda_N & (X\cap Y)
    \\&=\dim N
   -\dim(N\times (X\cap Y))-
    \dim(N\times (X'\cup Y'\cup\{v\})),\\
    2\lambda_N & (X\cup Y\cup \{v\})
    \\
    &=\dim N
   -\dim (N\times (X\cup Y\cup \{v\}))
    -\dim(N\times (X'\cap Y')),\\
    2\lambda_{N\delete\{v\}} &(X)\\
   &= \dim (N\delete\{v\})
   -\dim(N\delete\{v\}\times X)-\dim(N\delete\{v\}\times X'),\\
   2\lambda_{N\contract\{v\}}&(Y)\\
   &= \dim (N\contract\{v\})
   -\dim(N\contract\{v\}\times Y)-\dim(N\contract\{v\}\times Y').
  \end{align*}
  It is easy to deduce this lemma from Lemma~\ref{lem:tutte1} if 
  \begin{equation}
    \label{eq:1}
    2\dim N-
    \dim(N\delete \{v\})-\dim(N\contract \{v\})\le 2.
  \end{equation}
  Therefore we may assume that \eqref{eq:1} is false.
  Since we have assumed that $\dim N-\dim (N\delete\{v\})\in \{0,1\}$, 
  we conclude that $\dim N-\dim(N\contract \{v\})\ge 2$.
  By Proposition~\ref{prop:minordim}, we have $\ddown v\in N$.
  Then the equality in the inequality of Lemma~\ref{lem:tutte1} does
  not hold. So, we conclude that 
  $  \dim(N\times (X\cap Y))
  +\dim(N\times (X\cup Y\cup \{v\}))
  \ge 
  \dim(N\delete \{v\} \times X)
  +\dim(N\contract \{v\} \times Y)+1$
  and the same inequality for $X'$ and $Y'$. Then,
  $\lambda_{N\delete\{v\}}(X)+\lambda_{N\contract\{v\}}(Y)
  \ge \lambda_N(X\cap Y)+\lambda_N(X\cup Y\cup \{v\})
  -3/2+1$.
\end{proof}
We are now ready to prove an analogue
 of Tutte's linking theorem for Lagrangian chain-groups.
\begin{THM}\label{thm:tutte}
  Let $V$ be a finite set and $X$, $Y$ be disjoint subsets of $V$. 
  Let $N$ be a Lagrangian chain-group on $V$ to $K$. 
  The following two conditions are equivalent:
  \begin{enumerate}[(i)]
  \item $\lambda_N(Z)\ge k$ for all sets $Z$ such that $X\subseteq
    Z\subseteq V\setminus Y$,
  \item there is a minor $M$ of $N$ on $X\cup Y$ such that 
    $\lambda_M (X)\ge k$.
  \end{enumerate}
  In other words,
  \begin{multline*}
  \min
  \{\lambda_N(Z):X\subseteq Z\subseteq V\setminus Y\}\\
  =
  \max
  \{\lambda_{N\delete U\contract W} (X)
  :U\cup W=V\setminus (X\cup Y), U\cap W=\emptyset
  \}.
  \end{multline*}
\end{THM}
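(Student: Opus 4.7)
The plan is to establish the two directions of the min-max equality separately, with the easy direction coming directly from monotonicity of connectivity under minors and the harder direction proved by induction on $|V\setminus(X\cup Y)|$ using Proposition~\ref{prop:tutte}.

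For the direction $(\mathit{ii}) \Rightarrow (\mathit{i})$, suppose $M = N\delete U\contract W$ with $\lambda_M(X)\geq k$. For any $Z$ with $X\subseteq Z\subseteq V\setminus Y$, set $U' = Z\setminus X$, which is contained in $V\setminus(X\cup Y) = U\cup W$. Then Theorem~\ref{thm:connectivityminor} gives $\lambda_M(X)\leq \lambda_N(X\cup U') = \lambda_N(Z)$, so $\lambda_N(Z)\geq k$.

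For the direction $(\mathit{i}) \Rightarrow (\mathit{ii})$, I induct on $n := |V\setminus(X\cup Y)|$. If $n = 0$, take $M = N$. Otherwise, choose any $v\in V\setminus(X\cup Y)$ and claim that at least one of $N\delete\{v\}$ and $N\contract\{v\}$, viewed as a Lagrangian chain-group on $V\setminus\{v\}$, still satisfies condition $(\mathit{i})$ with the same $k$, $X$, $Y$; then induction finishes the proof. To prove the claim, assume for contradiction that there exist $Z_1, Z_2$ with $X\subseteq Z_i \subseteq V\setminus(Y\cup\{v\})$ such that $\lambda_{N\delete\{v\}}(Z_1) < k$ and $\lambda_{N\contract\{v\}}(Z_2) < k$. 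Since minors of Lagrangian chain-groups are Lagrangian (by the proposition following Corollary~\ref{cor:minordim}), Lemma~\ref{lem:even} forces both connectivity values to be integers, so both are $\leq k-1$. Applying Proposition~\ref{prop:tutte} with these sets,
\[
\lambda_{N\delete\{v\}}(Z_1) + \lambda_{N\contract\{v\}}(Z_2) \geq \lambda_N(Z_1\cap Z_2) + \lambda_N(Z_1\cup Z_2\cup\{v\}) - 1.
\]
Both $Z_1\cap Z_2$ and $Z_1\cup Z_2\cup\{v\}$ lie between $X$ and $V\setminus Y$ (using that $v\notin Y$), so condition $(\mathit{i})$ applied to $N$ gives that each of the two terms on the right is at least $k$, yielding $2k-2 \geq 2k-1$, a contradiction.

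The main obstacle is ensuring the integrality step, since Proposition~\ref{prop:tutte} only gives a bound of the form $\geq 2k-1$, which is strictly weaker than what is needed without integrality; this is exactly where the Lagrangian hypothesis is essential, via Lemma~\ref{lem:even} and the fact that minors of Lagrangian chain-groups are Lagrangian. Once this is in place the inductive step reduces the size of $V\setminus(X\cup Y)$ by one while preserving the Lagrangian property and the lower bound on $\lambda$, and we arrive at a minor supported on $X\cup Y$ with $\lambda_M(X)\geq k$.
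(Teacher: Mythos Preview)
Your proof is correct and follows essentially the same approach as the paper's: the easy direction via Theorem~\ref{thm:connectivityminor}, and the harder direction by induction on $|V\setminus(X\cup Y)|$, using Proposition~\ref{prop:tutte} together with the integrality from Lemma~\ref{lem:even} (applied to the Lagrangian minors $N\delete\{v\}$ and $N\contract\{v\}$) to derive a contradiction. The only cosmetic difference is that the paper phrases the inductive step as ``suppose (ii) fails'' rather than your ``one of the two minors still satisfies (i)'', but these are logically equivalent.
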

\begin{proof}
  By Theorem~\ref{thm:connectivityminor}, (ii) implies (i).
  Now let us assume (i) and show (ii). 
  We proceed by induction on $|V\setminus (X\cup Y)|$.
  If $V=X\cup Y$, then it is trivial. So we may assume that
  $|V\setminus (X\cup Y)|\ge 1$.
  Since $\lambda_N(X)$ are integers
  for all $X\subseteq V$ by Lemma~\ref{lem:even}, we may assume
  that
  $k$ is an integer.
  
  Let $v\in V\setminus (X\cup Y)$. 
  Suppose that (ii) is false. 
  Then there is no minor $M$ of
  $N\delete\{v\}$  or $N\contract \{v\}$ 
  on $X\cup Y$ having $\lambda_M(X)\ge k$. 
  By the induction hypothesis, we conclude that there are sets $X_1$ and
  $X_2$ such that
  $X\subseteq X_1\subseteq V\setminus Y\setminus\{v\}$,
  $X\subseteq X_2\subseteq V\setminus Y\setminus\{v\}$,
  $\lambda_{N\delete \{v\}}(X_1)<k$,
  and $\lambda_{N\contract\{v\}}(X_2)<k$.
  By Lemma~\ref{lem:even}, $\lambda_{N\delete\{v\}}(X_1)$ and 
  $\lambda_{N\contract\{v\}}(X_2)$ are integers.
  Therefore   $\lambda_{N\delete \{v\}}(X_1)\le k-1$
  and $\lambda_{N\contract\{v\}}(X_2)\le k-1$. 
  By Proposition~\ref{prop:tutte},
  \[
  \lambda_{N\delete\{v\}}(X_1)+\lambda_{N\contract\{v\}}(X_2)
  \ge \lambda_N(X_1\cap X_2)+\lambda_N(X_1\cup X_2\cup \{v\})-1.
  \]
  This is a contradiction because $\lambda_N(X_1\cap X_2)\ge k$ and
  $\lambda_N(X_1\cup X_2\cup \{v\})\ge k$.
\end{proof}
\begin{COR}\label{cor:tutte}
  Let $N$ be a Lagrangian chain-group on $V$ to $K$ and let
  $X\subseteq Y\subseteq V$.
  If $\lambda_N(Z)\ge \lambda_N(X)$ for all $Z$ satisfying 
  $X\subseteq Z\subseteq Y$, then 
  there exist disjoint subsets $C$ and $D$ of $Y\setminus X$ such that 
  $C\cup D=Y\setminus X$ and 
  $N\times X= N\times Y\contract C\delete D$.
\end{COR}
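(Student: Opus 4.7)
The plan is to deduce the corollary from Theorem~\ref{thm:tutte}: the partition $(C,D)$ will be supplied by the theorem itself, and the equality of chain-groups will then follow from a dimension count using the Lagrangian hypothesis.

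First I would apply Theorem~\ref{thm:tutte} to the pair of disjoint sets $X$ and $V\setminus Y$. Under this substitution the range $X\subseteq Z\subseteq V\setminus(V\setminus Y)$ from condition (i) of that theorem becomes precisely $X\subseteq Z\subseteq Y$, and since $Z=X$ is included, the corollary's hypothesis amounts to saying $\min\{\lambda_N(Z):X\subseteq Z\subseteq Y\}=\lambda_N(X)$. Hence Theorem~\ref{thm:tutte} with $k=\lambda_N(X)$ yields disjoint $U,W$ partitioning $V\setminus(X\cup(V\setminus Y))=Y\setminus X$ with $\lambda_{N\delete U\contract W}(X)\ge\lambda_N(X)$. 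I set $D=U$ and $C=W$.

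Next I would unwind the definitions of $\times$, $\contract$, and $\delete$ to establish the bookkeeping identity
\[
(N\times Y)\contract C\delete D \;=\; (N\contract C\delete D)\times X,
\]
valid whenever $C,D$ partition $Y\setminus X$: both sides describe the restrictions to $X$ of chains $g\in N$ satisfying $g(v)=0$ for all $v\in V\setminus Y$, $\form{g(c),\tbinom{0}{1}}_K=0$ for all $c\in C$, and $\form{g(d),\tbinom{1}{0}}_K=0$ for all $d\in D$. The inclusion $N\times X\subseteq (N\times Y)\contract C\delete D$ is then immediate, since any chain vanishing outside $X$ meets all these conditions trivially.

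Finally I would close the proof by matching dimensions. The chain-group $N':=N\contract C\delete D$ is a minor of the Lagrangian chain-group $N$ and is therefore itself Lagrangian, so Lemma~\ref{lem:even} gives $\dim(N'\times X)=|X|-\lambda_{N'}(X)$. Theorem~\ref{thm:connectivityminor} forces $\lambda_{N'}(X)\le\lambda_N(X)$, and together with the reverse inequality produced in the first step we obtain $\lambda_{N'}(X)=\lambda_N(X)$. Hence $\dim((N\times Y)\contract C\delete D)=|X|-\lambda_N(X)=\dim(N\times X)$, and the containment from the second step promotes to equality. The substantive content of the argument is carried by Theorem~\ref{thm:tutte}; the only remaining steps are the pleasant commutation of the minor operations with the $\times$-restriction and the use of the Lagrangian dimension formula to convert a connectivity equality into an equality of chain-groups.
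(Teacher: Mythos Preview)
Your proof is correct and follows essentially the same route as the paper's: both apply Theorem~\ref{thm:tutte} to the pair $X$, $V\setminus Y$ to obtain the partition $(C,D)$, both use the identity $(N\times Y)\contract C\delete D=(N\contract C\delete D)\times X$ and the easy inclusion $N\times X\subseteq (N\times Y)\contract C\delete D$, and both finish by converting the connectivity inequality into a dimension inequality via Lemma~\ref{lem:even}. Your invocation of Theorem~\ref{thm:connectivityminor} to pin down $\lambda_{N'}(X)=\lambda_N(X)$ is a harmless extra step; the paper simply uses the one-sided bound $\lambda_{N'}(X)\ge\lambda_N(X)$ from Theorem~\ref{thm:tutte} to get $\dim(N'\times X)\le\dim(N\times X)$ directly.
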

\begin{proof}
  For all $C$ and $D$ if $C\cup D=Y\setminus X$ and $C\cap
  D=\emptyset$, then 
  $N\times X\subseteq N\times Y\contract C\delete D$.
  So it is enough to show that there exists a partition $(C,D)$ of
  $Y\setminus X$ such that
  \[
  \dim(N\times X)\ge \dim(N\times Y\contract C\delete D).
  \]
  By Theorem~\ref{thm:tutte}, there is a minor $M=N\contract C\delete
  D$   of $N$ on $X\cup
  (V\setminus Y)$ such that $\lambda_M(X)\ge \lambda_N(X)$.
  It follows that 
  $|X|-\dim(N\contract C\delete D\times X)\ge 
  |X|-\dim(N\times X)$.
  Now we use the fact that $N\contract C\delete D\times X=
  N\times Y \contract C\delete D$.
\end{proof}

\section{Well-quasi-ordering of Lagrangian chain-groups}\label{sec:wqochain}
In this section, we prove that Lagrangian chain-groups of bounded
branch-width are well-quasi-ordered under taking a minor. Here we
state its simplified form.
\begin{THM}[Simplified]\label{thm:main}
  Let $\F$ be a finite field
  and let $k$ be a constant.
  Every infinite sequence $N_1,N_2,\ldots$ of Lagrangian
  chain-groups over $\F$
  having branch-width at most $k$
  has a pair $i<j$ such that 
  $N_i$ is simply isomorphic to a minor of $N_j$.
\end{THM}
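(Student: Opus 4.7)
The plan is to follow the Geelen--Gerards--Whittle well-quasi-ordering strategy for representable matroids of bounded branch-width, translated into the language of Lagrangian chain-groups. The essential ingredients are the analogue of Tutte's linking theorem already in hand (Theorem~\ref{thm:tutte} and Corollary~\ref{cor:tutte}), the finiteness of ``boundary data'' over a fixed finite field, and Kruskal's tree theorem applied to branch-decomposition trees labelled by that boundary data.

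The first step is to fix, for each $N_i$, a branch-decomposition $(T_i,\mathcal L_i)$ of width at most $k$ and replace it by a \emph{linked} one, so that for every edge $e$ with partition $(X_e,Y_e)$ and every set $Z$ with $X_e\subseteq Z\subseteq X_e\cup Y_e$ we have $\lambda_{N_i}(X_e)\le \lambda_{N_i}(Z)$. Such a linked decomposition exists by the standard uncrossing argument applied to the submodular function $\lambda_{N_i}$ (Theorem~\ref{thm:submodular}). Then Corollary~\ref{cor:tutte} realizes $N_i\times X_e$ as an explicit minor of $N_i$ cut out along $e$. Subdividing an arbitrary edge and rooting $T_i$ at the new vertex gives, at each non-root node $v$, a subset $X_v$ of the ground set and a Lagrangian sub-chain-group $N_i\times X_v$ (Lagrangian by Proposition~\ref{prop:minorisotropic} and the dimension count).

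The second step is to attach to each node $v$ a \emph{boundary label} $b(v)$ capturing the way $N_i\times X_v$ sits inside $N_i\cdot X_v$: a distinguished basis of the quotient space $N_i\cdot X_v/N_i\times X_v$, whose dimension is $2\lambda_{N_i}(X_v)\le 2k$, together with the restriction of the bilinear form. Since $\F$ is finite and the boundary has bounded dimension, only finitely many labels arise. Define a quasi-order $\preceq$ on rooted, labelled, subcubic trees by: $(T,b)\preceq (T',b')$ when there is a topological embedding of $T$ into $T'$ such that the label at each node of $T$ matches the label at its image (up to the equivalence needed for the forthcoming gluing), and the implied boundary-respecting maps can be realized by deletions and contractions in the ambient chain-group. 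The label set is finite, hence well-quasi-ordered, so Kruskal's tree theorem yields $i<j$ with $(T_i,b_i)\preceq (T_j,b_j)$.

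The main obstacle, and the payoff of the linked-decomposition setup, is the final gluing step: turning a label-preserving tree embedding into an actual simple isomorphism of $N_i$ with a minor of $N_j$. Here I would inductively peel $T_j$ from its leaves inward, at each step replacing an entire subtree of $N_j$ by the corresponding subtree of $N_i$ via a sequence of deletions and contractions; Corollary~\ref{cor:tutte} guarantees that the local surgery is achievable as a minor operation, and the boundary label equivalence guarantees that the surgery is compatible with the already-assembled part of $N_i$. Bookkeeping of pivot choices relies on Proposition~\ref{prop:chainpivot}, and preservation of simple isomorphism reduces ultimately to the uniqueness of the fundamental basis from Proposition~\ref{prop:chain2matrix}. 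The two cases of symmetric versus skew-symmetric form on $K$ are handled uniformly after passing to an infinite subsequence on which the form type, as well as the order of $|V_i|$ modulo small parameters, is constant.
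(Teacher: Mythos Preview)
Your overall strategy matches the paper's: linked branch-decompositions, finite boundary data over a finite field, and a tree-based well-quasi-ordering argument. But the execution has real gaps.

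First, a small error: $N_i\times X_v$ is isotropic but generally \emph{not} Lagrangian; by Lemma~\ref{lem:even} its dimension is $|X_v|-\lambda_{N_i}(X_v)$. The deficit $2\lambda_{N_i}(X_v)$ is exactly why one needs the boundary $B$, an ordered basis of $(N_i\times X_v)^\bot/(N_i\times X_v)=N_i\cdot X_v/(N_i\times X_v)$, to form a \emph{boundaried} chain-group. You have the right quotient but the wrong claim about it.

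The main gap is the gluing step. Matching boundary labels at corresponding nodes is not enough to reconstruct the parent from its two children: you must also record \emph{how} the two boundaried pieces combine to form the parent. The paper formalizes this as the \emph{connection type} of a sum of two disjoint boundaried chain-groups, a finite object once $\F$ is finite and boundaries are bounded, and proves (Proposition~\ref{prop:minorsum}) that replacing the children by boundary-preserving minors yields a minor of the parent with the \emph{same} connection type. Without this, your sentence ``the boundary label equivalence guarantees that the surgery is compatible'' is unsupported: two pairs of pieces with identical boundary labels can assemble into non-isomorphic wholes. Relatedly, the boundaries $B_e$ cannot be chosen arbitrarily; the paper constructs them inductively down the tree via Corollary~\ref{cor:tutte} so that $\lambda$-linkedness forces $P_e$ to be a minor of $P_f$ \emph{as boundaried chain-groups}, not merely as chain-groups.

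Two smaller points. The paper does not invoke Kruskal's theorem on whole trees; it uses the Robertson--Seymour ``lemma on cubic trees'' in the form of Lemma~\ref{lem:cubic}, which produces an antichain $e_0,e_1,\ldots$ with $l(e_i)$ and $r(e_i)$ both increasing, and then a further subsequence with constant $\lambda(e_i)$ and constant connection type feeds directly into Proposition~\ref{prop:minorsum} for the contradiction. Your quasi-order on labelled trees, as written, already bakes the minor relation into its definition, so Kruskal does not apply to it as stated. Finally, Propositions~\ref{prop:chainpivot} and~\ref{prop:chain2matrix} play no role in this proof; the argument stays entirely at the chain-group level and never passes through matrix representations or pivots.
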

This simplified version is enough to obtain results in
Sections~\ref{sec:wqomat} and~\ref{sec:matroid}.
One may first read corollaries in later sections and return
to this section.

\subsection{Boundaried chain-groups}
For an isotropic chain-group $N$ on $V$ to $K=\F^2$, we write $N^\bot/N$
for a vector space over $\F$ containing vectors of the form $a+N$ where
$a\in N^\bot$ such that
\begin{enumerate}[(i)]
\item $a+N=b+N$ if and only if $a-b\in N$,
\item $(a+N)+(b+N)=(a+b)+N$,
\item $c(a+N)=ca+N$ for $c\in \F$.
\end{enumerate}
An \emph{ordered basis} of a vector space is a sequence of vectors in the
vector space such that the vectors in the sequence form a basis
of the vector space.
An ordered basis of $N^\bot/N$ is called a \emph{boundary} of $N$.
An isotropic chain-group $N$ on $V$ to $K$ with a boundary $B$ is called
a \emph{boundaried chain-group} on $V$ to $K$, denoted by $(V,N,B)$.

By the theorem in the linear algebra, we know that
\[|B|=\dim(N^\bot)-\dim(N)=2(|V|-\dim N).\]

We define contractions and deletions of boundaries $B$ of an isotropic
chain-group $N$ on $V$ to $K$.
Let $B=\{b_1+N,b_2+N,\ldots,b_m+N\}$ be a boundary of $N$.
For a subset $X$ of $V$, if $|V\setminus X|-\dim(N\delete X)=|V|-\dim
N$, then we define $B\delete X$ as a sequence 
\[\{b_1'\cdot(V\setminus X)+N\delete X,
b_2'\cdot(V\setminus X)+N\delete X,
\ldots,
b_m'\cdot(V\setminus X)+N\delete X\}\]
where
$b_i+N=b_i'+N$
and $\form{b_i'(v),\tbinom 10}_K=0$ for all $v\in X$.
Similarly if 
$|V\setminus X|-\dim(N\contract X)=|V|-\dim
N$, then we define $B\contract X$ as a sequence 
\[\{b_1'\cdot(V\setminus X)+N\contract X,
b_2'\cdot(V\setminus X)+N\contract X,
\ldots,
b_m'\cdot(V\setminus X)+N\contract X\}\]
where
$b_i+N=b_i'+N$
and $\form{b_i'(v),\tbinom 01}_K=0$ for all $v\in X$.
We prove that $B\delete X$ and $B\contract X$ are well-defined.

\begin{LEM}\label{lem:boundaryminor}
  Let $N$ be an isotropic chain-group on $V$ to $K$.
  Let $X$ be a subset of $V$.
  If $\dim N-\dim (N\delete X)=|X|$ and 
  $f\in N^\bot$, then there exists a chain $g\in N^\bot$ such that
  $f-g\in N$ and 
  $\form{g(x),\tbinom 10}_K=0$ for all $x\in X$.
\end{LEM}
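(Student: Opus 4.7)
\textbf{Proof plan for Lemma \ref{lem:boundaryminor}.}

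The plan is to reformulate the lemma as a surjectivity statement about a natural projection and then verify it by dimension counting.

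First, set $U=\operatorname{span}\{\dup x:x\in X\}\subseteq K^V$, an $|X|$-dimensional subspace; note that $U$ is totally isotropic because $\langle\tbinom{1}{0},\tbinom{1}{0}\rangle_K=0$ for both $b^+$ and $b^-$. Define the linear map $\rho:K^V\to\F^X$ by $\rho(f)_x=\langle f(x),\tbinom{1}{0}\rangle_K$; then $\ker\rho=U^{\bot}$, and by Lemma~\ref{lem:basic} the condition ``$\rho(g)=0$'' is exactly what the lemma asks for. Observe that $N_X:=\{f\in N:\rho(f)=0\}=N\cap U^{\bot}$ and similarly $N^{\bot}_X=N^{\bot}\cap U^{\bot}$. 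The lemma now amounts to the claim $\rho(N^{\bot})\subseteq\rho(N)$: given such an $h\in N$ with $\rho(h)=\rho(f)$, the chain $g=f-h$ lies in $N^{\bot}$ (since $N\subseteq N^{\bot}$ by isotropy), satisfies $f-g=h\in N$, and has $\rho(g)=0$. Because $N\subseteq N^{\bot}$ already gives $\rho(N)\subseteq\rho(N^{\bot})$, it suffices to prove $\dim\rho(N)=\dim\rho(N^{\bot})$.

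Next, I would compute both ranks using the standard identity $\dim(A+B)+\dim(A\cap B)=\dim A+\dim B$ together with $(A+B)^{\bot}=A^{\bot}\cap B^{\bot}$ in the non-degenerately paired space $K^V$. Since $(N+U)^{\bot}=N^{\bot}\cap U^{\bot}=N^{\bot}_X$, we get
\[
\dim N^{\bot}_X=2|V|-\dim N-|X|+\dim(N\cap U),
\]
and symmetrically $\dim N_X=2|V|-\dim N^{\bot}-|X|+\dim(N^{\bot}\cap U)$. Using $\dim N+\dim N^{\bot}=2|V|$, this yields
\[
\dim\rho(N)=\dim N-\dim N_X=|X|-\dim(N^{\bot}\cap U),
\]
\[
\dim\rho(N^{\bot})=\dim N^{\bot}-\dim N^{\bot}_X=|X|-\dim(N\cap U).
\]
So the goal reduces to showing $\dim(N\cap U)=\dim(N^{\bot}\cap U)$, an equality that matches the inclusion $N\cap U\subseteq N^{\bot}\cap U$ coming from isotropy.

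Finally, I would bring in the hypothesis $\dim N-\dim(N\delete X)=|X|$. By definition $N\delete X=N_X\cdot(V\setminus X)$, and Lemma~\ref{lem:linalg} applied to $N_X$ gives $\dim(N\delete X)=\dim N_X-\dim(N_X\times X)$. But chains in $N$ that are supported on $X$ and have each value a multiple of $\tbinom{1}{0}$ are exactly the elements of $N\cap U$, and restriction to $X$ is a bijection, so $\dim(N_X\times X)=\dim(N\cap U)$. Hence the hypothesis reads
\[
|X|=\dim N-\dim N_X+\dim(N\cap U)=\dim\rho(N)+\dim(N\cap U),
\]
which combined with the formula $\dim\rho(N)=|X|-\dim(N^{\bot}\cap U)$ forces $\dim(N\cap U)=\dim(N^{\bot}\cap U)$, as required.

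The main obstacle is the bookkeeping in the dimension identities for $N\cap U$, $N+U$ and their $^{\bot}$-duals; once those are in place, the hypothesis translates cleanly into the equality of the two intersection dimensions. Isotropy of $N$ is used twice (to get $N\cap U\subseteq N^{\bot}\cap U$ and to form $g$ inside $N^{\bot}$), and the identification of $N_X\times X$ with $N\cap U$ is the combinatorial bridge between the assumption and the linear-algebraic conclusion.
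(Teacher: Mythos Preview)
Your proof is correct and takes a genuinely different route from the paper's argument. The paper proceeds by induction on $|X|$: for $v\in X$ with $\form{f(v),\tbinom10}_K\neq 0$, it invokes Corollary~\ref{cor:minordim} and Proposition~\ref{prop:minordim} to deduce $\dup v\notin N^\bot$, picks $h\in N$ with $\form{h(v),\tbinom10}_K\neq 0$ to cancel the $v$-coordinate, and then applies the induction hypothesis to $f-h$ inside $N\delete\{v\}$. Your approach replaces this element-by-element correction with a single global dimension count: you recast the conclusion as $\rho(N^\bot)\subseteq\rho(N)$, compute both image dimensions via $(A+B)^\bot=A^\bot\cap B^\bot$, and reduce everything to the equality $\dim(N\cap U)=\dim(N^\bot\cap U)$, which you then extract directly from the hypothesis through the identification $\dim(N_X\times X)=\dim(N\cap U)$ (Lemma~\ref{lem:basic} being the bridge). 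The paper's method is more constructive and dovetails with the inductive machinery used elsewhere in Section~\ref{sec:wqochain}; your method is shorter, avoids induction entirely, and makes transparent that the hypothesis $\dim N-\dim(N\delete X)=|X|$ is \emph{exactly} the condition $N\cap U=N^\bot\cap U$. One cosmetic point: your first citation of Lemma~\ref{lem:basic} (right after introducing $\rho$) is not needed there, since ``$\rho(g)=0$'' is literally the condition in the statement; the lemma is genuinely used only later, to identify $N_X\times X$ with $N\cap U$.
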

\begin{proof}
  We proceed by induction on $|X|$. If $X=\emptyset$, then it is
  trivial.
  Let us assume that $X$ is nonempty.
  Notice that $N\subseteq N^\bot$ because $N$ is isotropic.
  We may assume that there is $v\in X$ such that
  $\form{f(v),\tbinom10}_K\neq 0$, because otherwise we can take
  $g=f$.


  Then $\dup v\notin N$.
  Since $|V\setminus X|-\dim (N\delete X)=|V|-\dim N$,
  we have $|V|-1-\dim (N\delete \{v\})=|V|-\dim N$ (Corollary~\ref{cor:minordim}) and therefore 
  $\dup v\notin N^\bot$ by Proposition~\ref{prop:minordim}.
  
  Thus there exists a chain $h\in N$ 
  such that $\form{h,\dup
  v}=\form{h(v),\tbinom 10}_K\neq 0$. By multiplying a nonzero
  constant to $h$, we may assume  that \[\form{f(v)-h(v),\tbinom
    10}_K=0.\]
  Let $f'=f-h\in N^\bot$. Then $\form{f'(v),\tbinom 10}_K=0$ and therefore 
  $f'\cdot (V\setminus \{v\})\in N^\bot \delete \{v\}=(N\delete\{v\})^\bot$.
  By using the induction hypothesis based on the fact that 
  $\dim(N\delete\{v\})-\dim (N\delete X)=|X|-1$, we deduce that
  there exists a chain $g'\in (N\delete\{v\})^\bot$ such that 
  $f'\cdot (V\setminus\{v\})-g'\in N\delete\{v\}$ and 
  $\form{g'(x),\tbinom 10}_K=0$ for all $x\in X\setminus\{v\}$.
  Let $g$ be a chain in $N^\bot$ such that $g\cdot
  (V\setminus\{v\})=g'$ and $\form{g(v),\tbinom 10}_K=0$.
  
  We know that $\form{f'(v)-g(v),\tbinom 10}_K=0$.
  Since $(f'-g)\cdot (V\setminus \{v\})\in N\delete \{v\}$
  and $\dup v\notin N$, we deduce that $f'-g\in N$.
  Thus $f-g=f'-g+h\in N$.
  Moreover for all $x\in X$, $\form{g(x),\tbinom 10}_K=0$.
\end{proof}
\begin{LEM}\label{lem:boundaryminor2}
  Let $N$ be an isotropic chain-group on $V$ to $K$.
  Let $X$ be a subset of $V$. 
  Let   $f$ be a chain in $N^\bot$ such that
  $\form{f(x),\tbinom 10}_K=0$ if $x\in X$
  and 
  $f(x)=0$ if $x\in V\setminus X$.
  If $\dim N-\dim(N\delete X)=|X|$,
  then 
  $f\in N$.
\end{LEM}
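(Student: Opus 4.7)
The plan is to induct on $|X|$. The base case $X=\emptyset$ is immediate: $f$ is the zero chain on $V$, hence lies in $N$. For the inductive step, fix any $v\in X$; since $\form{f(v),\tbinom{1}{0}}_K=0$, we must have $f(v)=c\tbinom{1}{0}$ for some $c\in\F$. The strategy is to apply the inductive hypothesis to $N\delete\{v\}$ on $V\setminus\{v\}$ with the set $X\setminus\{v\}$ to obtain a chain $g\in N$ agreeing with $f$ off $v$, and then to pin down $g(v)$ forcing $g=f$.

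To invoke the inductive hypothesis, I must know that $\dim N - \dim(N\delete\{v\})=1$, so that the dimensional condition $\dim(N\delete\{v\}) - \dim(N\delete X) = |X|-1$ holds for $N\delete\{v\}$ and $X\setminus\{v\}$. Corollary~\ref{cor:minordim} applied to the minor $N\delete X$ of $N\delete\{v\}$ gives $\dim(N\delete\{v\}) - \dim(N\delete X) \le |X|-1$, so combined with the hypothesis $\dim N-\dim(N\delete X)=|X|$ the decrease at $v$ is at least $1$. Proposition~\ref{prop:minordim} says the only way this decrease exceeds~$1$ is $\dup v\in N$ and $\dup v\notin N^\bot$; but this is impossible for the isotropic $N$, since $N\subseteq N^\bot$. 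Hence the decrease is exactly $1$.

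Now $f\cdot(V\setminus\{v\})$ lies in $N^\bot\delete\{v\}=(N\delete\{v\})^\bot$ by Theorem~\ref{thm:delcondual} (using $\form{f(v),\tbinom{1}{0}}_K=0$), is supported on $X\setminus\{v\}$, and has vanishing first coordinate there. The induction hypothesis then yields $f\cdot(V\setminus\{v\})\in N\delete\{v\}$, so there exists $g\in N$ with $\form{g(v),\tbinom{1}{0}}_K=0$, say $g(v)=c'\tbinom{1}{0}$, and $g(w)=f(w)$ for all $w\neq v$. Then $f-g=(c-c')\dup v$. If $\dup v\in N$, immediately $f=g+(f-g)\in N$. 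Otherwise the decrease-$1$ analysis of Proposition~\ref{prop:minordim} forces $\dup v\notin N^\bot$, so there is $g^*\in N$ with $\form{g^*(v),\tbinom{1}{0}}_K\neq 0$. Since $f\in N^\bot$ and $g\in N\subseteq N^\bot$, pairing with $g^*$ gives $0=\form{f-g,g^*}=(c-c')\form{\dup v,g^*}$, and the latter bilinear form equals up to sign $\form{g^*(v),\tbinom{1}{0}}_K\neq 0$. Thus $c=c'$ and $f=g\in N$.

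The main obstacle is ensuring that the inductive hypothesis remains applicable as $|X|$ shrinks; isotropy of $N$ is what rules out the decrease-$2$ behavior in Proposition~\ref{prop:minordim} and is the essential ingredient that makes this bookkeeping go through.
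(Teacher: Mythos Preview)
Your proof is correct and follows essentially the same approach as the paper's: induct on $|X|$, use Corollary~\ref{cor:minordim} and Proposition~\ref{prop:minordim} together with isotropy to get $\dim N-\dim(N\delete\{v\})=1$, apply Theorem~\ref{thm:delcondual} and the induction hypothesis to lift $f\cdot(V\setminus\{v\})$ into $N\delete\{v\}$, and then split on whether $\dup v\in N$ or $\dup v\notin N^\bot$. The only cosmetic difference is that where you exhibit a witness $g^*\in N$ and pair $f-g$ against it, the paper simply notes $f-g\in N^\bot$ (since $f\in N^\bot$ and $g\in N\subseteq N^\bot$), so $(c-c')\dup v\in N^\bot$ forces $c=c'$ when $\dup v\notin N^\bot$; this is the same argument written more compactly.
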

\begin{proof}
  We proceed by induction on $|X|$. 
  We may assume that $X$ is nonempty. Let $v\in X$.
  By Corollary~\ref{cor:minordim}, 
   $\dim(N\delete\{v\})=\dim N-1$
   and $\dim (N\delete\{v\})-\dim (N\delete X)=|X|-1$.
   Proposition~\ref{prop:minordim} implies that either $\dup v \in
   N$ or $\dup v\notin N^\bot$.

  
   By Theorem~\ref{thm:delcondual}, $f\cdot (V\setminus \{v\})\in
   (N\delete\{v\})^\bot$.
   By the induction hypothesis, $f\cdot (V\setminus\{v\})\in N\delete
   \{v\}$.
   There is a chain $f'\in N$ such that $f'(x)=f(x)$ for all $x\in
   V\setminus\{v\}$
   and $\form{f'(v),\tbinom10}_K=0$.
   Then $f-f'=cv^*$ for some $c\in \F$ by Lemma~\ref{lem:basic}.
   Because $N$ is isotropic, $f-f'\in N^\bot$.

   If $\dup v\in N$, then $f=f'+cv^*\in N$.
    If $\dup v\notin N^\bot$, then $c=0$ and therefore $f\in N$.
\end{proof}

  


\begin{PROP}
  Let $N$ be an isotropic chain-group on $V$ to $K$ with a boundary
  $B$.
  Let $X$ be a subset of $V$. 
  If $|V\setminus X|-\dim (N\delete X)=|V|-\dim N$, then 
  $B\delete X$ is well-defined and it is a boundary of $N\delete X$.
  Similarly 
  if $|V\setminus X|-\dim (N\contract X)=|V|-\dim N$, then 
  $B\contract X$ is well-defined and it is a boundary of $N\contract X$.
\end{PROP}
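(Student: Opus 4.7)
The plan is to verify four things in sequence: (a) the chain $b_i'$ in the definition of $B\delete X$ exists, (b) the resulting coset in $(N\delete X)^\bot/(N\delete X)$ does not depend on the choice of $b_i'$, (c) the sequence $B\delete X$ has the correct length, and (d) it is linearly independent. The case of $B\contract X$ is entirely symmetric (replace $\tbinom10$ with $\tbinom01$ throughout), so I will only describe the deletion case.

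First, for existence of $b_i'$: the hypothesis $|V\setminus X|-\dim(N\delete X)=|V|-\dim N$ unfolds to $\dim N - \dim(N\delete X) = |X|$, which is exactly the hypothesis of Lemma~\ref{lem:boundaryminor}. Applied to $b_i\in N^\bot$, it yields $g=b_i'\in N^\bot$ with $b_i-b_i'\in N$ and $\form{b_i'(v),\tbinom10}_K=0$ for all $v\in X$. Then by Theorem~\ref{thm:delcondual}, $b_i'\cdot(V\setminus X)\in N^\bot\delete X = (N\delete X)^\bot$, so writing a coset with respect to $N\delete X$ makes sense.

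Next, well-definedness: suppose $b_i'$ and $b_i''$ are two choices. Their difference $b_i'-b_i''$ lies in $N^\bot\cap N = N$ (since $b_i-b_i',\,b_i-b_i''\in N$), and it satisfies $\form{(b_i'-b_i'')(v),\tbinom10}_K=0$ for all $v\in X$. By the definition of $N\delete X$ this means $(b_i'-b_i'')\cdot(V\setminus X)\in N\delete X$, so the cosets $b_i'\cdot(V\setminus X)+N\delete X$ and $b_i''\cdot(V\setminus X)+N\delete X$ coincide.

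Third, dimension count: using $\dim N-\dim(N\delete X)=|X|$ and that deletion reduces the ground set by $|X|$,
\[
\dim((N\delete X)^\bot/(N\delete X)) = 2\bigl(|V\setminus X|-\dim(N\delete X)\bigr) = 2(|V|-\dim N)=|B|,
\]
so $|B\delete X|=|B|$ matches the dimension of the target quotient space.

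Finally, linear independence, which is the main step. Suppose $\sum_i c_i\bigl(b_i'\cdot(V\setminus X)+N\delete X\bigr)=0$. Then there exists $h\in N$ with $\form{h(v),\tbinom10}_K=0$ for all $v\in X$ such that $\sum_i c_i b_i'\cdot(V\setminus X) = h\cdot(V\setminus X)$. Setting $f = \sum_i c_i b_i' - h$, one has $f\in N^\bot$ (since each $b_i'\in N^\bot$ and $N\subseteq N^\bot$), while $f(v)=0$ for $v\in V\setminus X$ and $\form{f(v),\tbinom10}_K=0$ for $v\in X$. Lemma~\ref{lem:boundaryminor2} now forces $f\in N$, so $\sum_i c_i b_i'\in N$, hence $\sum_i c_i(b_i+N)=0$ in $N^\bot/N$. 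Since $B$ is a basis, every $c_i=0$. Combined with the dimension count this shows $B\delete X$ is an ordered basis of $(N\delete X)^\bot/(N\delete X)$. The hard part is precisely this last step, which is where Lemma~\ref{lem:boundaryminor2} must do the work of bridging between ``orthogonality on $X$ plus vanishing off $X$'' and genuine membership in $N$; everything else is bookkeeping once the two preparatory lemmas are in place.
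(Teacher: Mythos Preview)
Your proof is correct and follows essentially the same route as the paper: existence of $b_i'$ via Lemma~\ref{lem:boundaryminor}, well-definedness by observing the difference lies in $N\delete X$, the dimension count, and linear independence via Lemma~\ref{lem:boundaryminor2}. The only cosmetic difference is that the paper replaces $b_i$ by $b_i'$ at the outset (writing ``we may assume $\form{b_i(x),\tbinom10}_K=0$'') whereas you keep $b_i'$ and $b_i$ separate; your extra remark invoking Theorem~\ref{thm:delcondual} to confirm $b_i'\cdot(V\setminus X)\in(N\delete X)^\bot$ is a helpful clarification the paper leaves implicit.
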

\begin{proof}
  By symmetry it is enough to show for $B\delete X$.
  Let $B=\{b_1+N,b_2+N,\ldots,b_m+N\}$.

  By Lemma~\ref{lem:boundaryminor}, there exists a chain $b_i'\in
  N^\bot$ such that $b_i+N=b_i'+N$ and $\form{b_i'(x),\tbinom 10}_K=0$
  for all $x\in X$. 

  Suppose that there are chains $c_i$ and $d_i$ in $N^\bot$
  such that 
  $b_i+N=c_i+N=d_i+N$ and
  $\form{c_i(x),\tbinom 10}_K=\form{d_i(x),\tbinom 10}_K=0$ for all
  $x\in X$.
  Since $c_i-d_i\in N$ and $\form{c_i(x)-d_i(x),\tbinom 10}_K=0$ for
  all $x\in X$, we deduce that $(c_i-d_i)\cdot (V\setminus X)\in
  N\delete X$ and therefore 
  \[c_i\cdot (V\setminus X)+N\delete X=d_i\cdot (V\setminus X)+N\delete X.\]
  Hence $B\delete X$ is well-defined.

  Now we claim that $B\delete X$ is a boundary of $N\delete X$.
  Since $\dim((N\delete X)^\bot/ (N\delete X))
  = 2|V\setminus X|-2\dim(N\delete X)=2|V|-2\dim N=\dim
  N^\bot/N=|B|=|B\delete X|$, 
  it is enough to show that 
  $B\delete X$ is  linearly independent in $(N\delete
  X)^\bot /N\delete X$.
  We may assume that $\form{b_i(x),\tbinom 10}_K=0$ for all $x\in X$.
  Let $f_i=b_i\cdot (V\setminus X)\in N^\bot\delete X$.
  We claim that $\{f_i+N\delete X: i=1,2,\ldots,m\}$ is linearly independent. 
  Suppose that 
  $  \sum_{i=1}^m a_i (f_i+N\delete X)=0  $
  for some constants $a_i\in \F$.
  This means $\sum_{i=1}^m a_i f_i\in N\delete X$.
  Let $f$ be a chain in $N$ such that
  $f\cdot (V\setminus X)=\sum_{i=1}^m a_i f_i$ and
  $\form{f(x),\tbinom 10}_K=0$ for all $x\in X$.
  Let $b=\sum_{i=1}^m a_i b_i$.
  Clearly $b\in N^\bot$.
  
  We consider the chain $b-f$. 
  Since $N$ is isotropic, $f\in N^\bot$ and so $b-f\in N^\bot$.
  Moreover
  $(b-f)\cdot (V\setminus X)=0$ and $\form{b(x)-f(x),\tbinom
    10}_K=0$ for all $x\in X$.
  By Lemma~\ref{lem:boundaryminor2}, we deduce that 
  $b-f\in N$ and therefore $b=(b-f)+f\in N$.
  Since $B$ is a basis of $N^\bot/N$, $a_i=0$ for all $i$.
  We conclude that $B\delete X$ is linearly independent.
\end{proof}

A boundaried chain-group $(V',N',B')$ is a \emph{minor} of
another boundaried chain-group $(V,N,B)$ if 
\[
|V'|-\dim N'= |V|-\dim N
\]
and 
there exist disjoint subsets $X$ and $Y$ of $V$ such that
$V'=V\setminus(X\cup Y)$,
$N'=N\delete X\contract Y$,
and $B'=B\delete X\contract Y$. 

\begin{PROP}
  A minor of a minor of a boundaried chain-group is a minor of the
  boundaried chain-group.
\end{PROP}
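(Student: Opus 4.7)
The plan is to reduce the composition of two minor operations on a boundaried chain-group to the composition of the three underlying pieces: the ground set restriction (trivial), the chain-group operation, and the boundary operation. More concretely, suppose $(V',N',B')$ is a minor of $(V,N,B)$ via disjoint subsets $X,Y\subseteq V$, and $(V'',N'',B'')$ is a minor of $(V',N',B')$ via disjoint subsets $X',Y'\subseteq V'$. We want to exhibit $(V'',N'',B'')$ as a minor of $(V,N,B)$ using the disjoint subsets $X\cup X'$ and $Y\cup Y'$ of $V$. On the level of ground sets this is immediate, and on the level of dimensions the condition $|V''|-\dim N'' = |V'|-\dim N'=|V|-\dim N$ composes trivially.

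For the chain-group part, I would invoke the proposition already proved in the excerpt: the identities
\begin{align*}
N\contract X\contract Y &= N\contract (X\cup Y),\\
N\contract X\delete Y   &= N\delete Y\contract X,\\
N\delete X\delete Y     &= N\delete (X\cup Y)
\end{align*}
let us rewrite $N\delete X\contract Y\delete X'\contract Y'$ as $N\delete(X\cup X')\contract(Y\cup Y')$, which is the right shape. So the only genuine work is the corresponding composition statement for boundaries.

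The remaining step is to verify the three identities $B\delete X\delete X'=B\delete(X\cup X')$, $B\contract Y\contract Y'=B\contract(Y\cup Y')$, and $B\delete X\contract Y=B\contract Y\delete X$. For the first, unfold the definition: $B\delete X$ is built from representatives $b_i'\in N^\bot$ of $b_i+N$ that vanish against $\tbinom 10$ on $X$, and then $(B\delete X)\delete X'$ is built from representatives of $b_i'\cdot(V\setminus X)+N\delete X$ vanishing against $\tbinom 10$ on $X'$. By Lemma~\ref{lem:boundaryminor} applied inside $N\delete X$ (whose defect matches $N$ by hypothesis), such a representative lifts to a chain $b_i''\in N^\bot$ vanishing against $\tbinom 10$ on all of $X\cup X'$, which is exactly a valid choice for $B\delete(X\cup X')$; any two choices differ by an element of $N$ supported off $X\cup X'$ and annihilating $\tbinom 10$ on $X\cup X'$, so by Lemma~\ref{lem:boundaryminor2} they yield the same class in $(N\delete(X\cup X'))^\bot/(N\delete(X\cup X'))$. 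The symmetric argument handles $B\contract Y\contract Y'$, and for the mixed case one lifts to a representative vanishing against $\tbinom 10$ on $X$ and against $\tbinom 01$ on $Y$ simultaneously, which is possible by applying Lemma~\ref{lem:boundaryminor} first to delete $X$ and then to contract $Y$.

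The main obstacle I anticipate is the mixed identity $B\delete X\contract Y=B\contract Y\delete X$: one has to show that a single representative of $b_i+N$ can be chosen to simultaneously satisfy both the $\tbinom 10$-orthogonality on $X$ and the $\tbinom 01$-orthogonality on $Y$, and that any two such representatives give the same class modulo $N\delete X\contract Y$. Iterating Lemma~\ref{lem:boundaryminor} gives existence, and a small computation using Lemma~\ref{lem:boundaryminor2} applied to $N\delete X$ (whose contraction by $Y$ has the same defect as $N$) gives uniqueness. Everything else is book-keeping, and this finishes the proof.
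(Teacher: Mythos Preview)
Your approach is correct and essentially matches the paper's. The paper's own proof is quite terse: after observing (via Corollary~\ref{cor:minordim}) that the defect is preserved at the intermediate stage $N_0\delete(X_0\cup X_1)$, it simply asserts that ``from the definition, it is easy to show that $B\delete(X_0\cup X_1)\contract(Y_0\cup Y_1)=B_2$.'' Your three auxiliary identities for boundaries, together with the lifting argument via Lemmas~\ref{lem:boundaryminor} and~\ref{lem:boundaryminor2}, are exactly what is needed to unpack that assertion; the paper just chooses not to spell this out.
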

\begin{proof}
  Let $(V_0,N_0,B_0)$, $(V_1,N_1,B_1)$, $(V_2,N_2,B_2)$ be boundaried
  chain-groups.
  Suppose that for $i\in \{0,1\}$, $(V_{i+1},N_{i+1},B_{i+1})$ is a minor of 
  $(V_i,N_i,B_i)$ as follows:
  \[ 
  N_{i+1}=N_i \delete X_i \contract Y_i,
  \quad
  B_{i+1}=B_i \delete X_i \contract Y_i.
  \]
  It is easy to deduce that $|V_0|-\dim N_0=|V_2|-\dim N_2$
  and $N_2=N_0\delete (X_0\cup X_1)\contract (Y_0\cup Y_1)$.

  We claim that $B_2=B_0\delete (X_0\cup X_1)\contract (Y_0\cup Y_1)$.
  By Corollary~\ref{cor:minordim}, we deduce  that 
  $
  |V_0\setminus (X_0\cup X_1)|-\dim N_0\delete (X_0\cup X_1)
  = |V_0|-\dim N_0
  = |V_2|-\dim N_2$
  and so it is possible to delete $X_0\cup X_1$ from $V_0$
  and then contract $Y_0\cup Y_1$.
  From the definition, it is easy to show that 
  $B\delete (X_0\cup X_1)\contract (Y_0\cup Y_1)=B_2$.
\end{proof}

\subsection{Sums of boundaried chain-groups}
Two boundaried chain-groups  over the same field
are \emph{disjoint}  if
their ground sets are disjoint.
In this subsection, we define \emph{sums} of disjoint boundaried chain-groups
and their \emph{connection types}.

A boundaried chain-group $(V,N,B)$ over a field $\F$ is 
a \emph{sum} of
disjoint boundaried chain-groups $(V_1,N_1,B_1)$ and $(V_2,N_2,B_2)$ 
over $\F$
if \[N_1=N\times V_1,~ N_2=N\times V_2, \text{ and }V=V_1\cup V_2.\]
For a chain $f$ on $V_1$ to $K$ and
a chain $g$ on $V_2$ to $K$, 
we denote $f\oplus g$ for a chain on $V_1\cup V_2$ to $K$ such that
$(f\oplus g)\cdot V_1=f$ and $(f\oplus g)\cdot V_2=g$.
The \emph{connection type} of the sum
is a sequence $(C_0,C_1,\ldots,C_{|B|})$ of sets of sequences 
in $\F^{|B_1|}\times \F^{|B_2|}$
such that,
for
$B=\{b_1+N,b_2+N,\ldots,b_{|B|}+N\}$,
$B_1=\{b^1_1+N_1,b^1_2+N_1,\ldots,b_{|B_1|}^1+N_1\}$, and 
$B_2=\{b^2_1+N_2,b^2_2+N_2,\ldots,b^2_{|B_2|}+N_2\}$,
\begin{align*}
  C_0&=
  \left\{ (x,y)\in \F^{|B_1|}\times \F^{|B_2|}:
    \left( \sum_{i=1}^{|B_1|} x_i b_i^1\right)
    \oplus \left(\sum_{j=1}^{|B_2|} y_j b_j^2 \right)\in N\right\},\\
  \intertext{and for $s\in\{1,2,\ldots,|B|\}$, }
 C_s&=
  \left\{  (x,y)\in \F^{|B_1|}\times \F^{|B_2|}:
    \left( \sum_{i=1}^{|B_1|} x_i b_i^1\right)
    \oplus \left(\sum_{j=1}^{|B_2|} y_j b_j^2 \right)-b_s\in N\right\}.
\end{align*}

\begin{PROP}
  The connection type is well-defined.
\end{PROP}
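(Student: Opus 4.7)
The statement to prove is that the sets $C_0, C_1, \ldots, C_{|B|}$ do not depend on the choice of representatives $b_i, b_i^1, b_j^2$ of the cosets comprising the boundaries $B, B_1, B_2$. My plan is to verify invariance under each of the three kinds of representative changes separately, using the crucial containment that $N_1$ and $N_2$ embed into $N$.

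The key observation is this: since $(V, N, B)$ is a sum of $(V_1, N_1, B_1)$ and $(V_2, N_2, B_2)$, we have $N_1 = N \times V_1$ and $N_2 = N \times V_2$. By definition of $N \times V_i$, every chain $n_1 \in N_1$, viewed as a chain on $V$ that is zero on $V_2$, lies in $N$; and similarly for $N_2$. Consequently, for any $n_1 \in N_1$ and $n_2 \in N_2$, the chain $n_1 \oplus n_2 = (n_1 \oplus 0) + (0 \oplus n_2)$ belongs to $N$.

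Now suppose we replace $b_i^1$ by $b_i^1 + n_i^1$ with $n_i^1 \in N_1$, and $b_j^2$ by $b_j^2 + n_j^2$ with $n_j^2 \in N_2$. For a pair $(x,y) \in \F^{|B_1|} \times \F^{|B_2|}$, the chain
\[
\Bigl(\sum_i x_i (b_i^1 + n_i^1)\Bigr) \oplus \Bigl(\sum_j y_j (b_j^2 + n_j^2)\Bigr)
\]
differs from $(\sum_i x_i b_i^1) \oplus (\sum_j y_j b_j^2)$ by $(\sum_i x_i n_i^1) \oplus (\sum_j y_j n_j^2)$, which lies in $N$ by the observation above. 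Therefore membership in $N$ is unchanged, so $C_0$ is well-defined; and membership in the coset $b_s + N$ is also unchanged, so each $C_s$ is well-defined up to the choice of $b_s$. Finally, replacing $b_s$ by $b_s + n$ with $n \in N$ does not alter the condition $(\cdots) - b_s \in N$, since $N$ is closed under adding $n$. This covers all three kinds of representative changes and completes the verification.

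I do not anticipate a main obstacle here; the proof is essentially bookkeeping once one identifies $N_1, N_2 \subseteq N$ via the extension-by-zero embedding. The only small subtlety is distinguishing chains on $V_i$ from their extensions to $V$, but this is handled cleanly by the $\oplus$ notation already introduced.
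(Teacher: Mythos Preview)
Your proposal is correct and follows essentially the same approach as the paper's proof: both reduce well-definedness to the observation that $N_1 \oplus 0 \subseteq N$ and $0 \oplus N_2 \subseteq N$ (since $N_i = N \times V_i$), so that changing representatives modulo $N_1$, $N_2$, or $N$ perturbs the relevant chain by an element of $N$. The paper's version is slightly terser but the content is identical.
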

\begin{proof}
  It is enough to show  that the choices of $b_i$, $b_i^1$, and $b_i^2$
  do not affect $C_s$ for $s\in \{0,1,2,\ldots,|B|\}$.
  Suppose that $b_i+N=d_i+N$, $b_i^1+N_1=d_i^1+N_1$, and
  $b_i^2+N_2=d_i^2+N_2$. 
  Then for every $(x,y)\in \F^{|B_1|}\times \F^{|B_2|}$, 
  \[\sum_{i=1}^{|B_1|}  x_i(b_i^1-d_i^1) 
  \oplus \sum_{j=1}^{|B_2|} y_j(b_j^2-d_j^2)\in N\] 
  because $(b_i^1-d_i^1)\oplus 0\in N$ and 
  $0\oplus (b_j^2-d_j^2)\in N$.
  Moreover if $s\neq 0$, then $b_s-d_s\in N$.
  Hence $C_s$ is well-defined.
\end{proof}
\begin{PROP}
  The connection type uniquely determines the sum of two disjoint
  boundaried chain-groups.
\end{PROP}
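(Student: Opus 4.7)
The plan is to show that, given the two input boundaried chain-groups $(V_1, N_1, B_1)$ and $(V_2, N_2, B_2)$ and the connection type $(C_0, C_1, \ldots, C_{|B|})$, one can recover $N$ and $B$ uniquely, so that any sum with this connection type coincides with $(V, N, B)$. Fix representatives $b_i^1 \in N_1^\bot$ and $b_j^2 \in N_2^\bot$ of the boundary cosets in $B_1$ and $B_2$.

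The key preliminary observation is that $N \subseteq N^\bot \subseteq N_1^\bot \oplus N_2^\bot$. The first inclusion is just the isotropy of $N$ (built into the definition of a boundaried chain-group). For the second, note that the sum condition forces $N_1 \oplus N_2 \subseteq N$ (identifying chains on $V_i$ with chains on $V$ that vanish off $V_i$), so by the elementary identity $(N_1 \oplus N_2)^\bot = N_1^\bot \oplus N_2^\bot$ we get $N^\bot \subseteq N_1^\bot \oplus N_2^\bot$.

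I then recover $N$ from $C_0$. Using the inclusion above, every $f \in N$ can be written as $(n_1 + \sum_i x_i b_i^1) \oplus (n_2 + \sum_j y_j b_j^2)$ with $n_1 \in N_1$, $n_2 \in N_2$, $(x,y) \in \F^{|B_1|} \times \F^{|B_2|}$. Since $n_1 \oplus 0$ and $0 \oplus n_2$ already lie in $N$, such a combination lies in $N$ if and only if $(\sum x_i b_i^1) \oplus (\sum y_j b_j^2) \in N$, i.e.\ $(x, y) \in C_0$. Therefore
\[
N = \bigl\{(n_1 + \textstyle\sum_i x_i b_i^1) \oplus (n_2 + \sum_j y_j b_j^2) : n_1 \in N_1,\ n_2 \in N_2,\ (x, y) \in C_0\bigr\},
\]
which depends only on the inputs and $C_0$. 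Next, for $s \in \{1, \ldots, |B|\}$, the representative $b_s$ lies in $N^\bot \subseteq N_1^\bot \oplus N_2^\bot$, so we may write $b_s = (n_1 + \sum x_i b_i^1) \oplus (n_2 + \sum y_j b_j^2)$; reducing modulo $N$ and absorbing $n_1 \oplus 0$ and $0 \oplus n_2$ yields $(x, y) \in C_s$, so $C_s \neq \emptyset$. For any such $(x, y)$, the coset $b_s + N$ equals $\bigl((\sum x_i b_i^1) \oplus (\sum y_j b_j^2)\bigr) + N$, so since $N$ has already been recovered, the entire ordered basis $B$ is determined.

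The only real subtlety is the inclusion $N^\bot \subseteq N_1^\bot \oplus N_2^\bot$, which depends crucially on the sum hypothesis $N_1 \oplus N_2 \subseteq N$ and not merely on isotropy; everything else is unwinding definitions of $C_0$ and $C_s$. Once that inclusion is in hand, the parameterization of $N^\bot$ (and hence $N$) by $(n_1, n_2, x, y)$ makes both recoveries immediate.
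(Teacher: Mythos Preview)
Your proof is correct and follows essentially the same approach as the paper. The paper phrases it as ``take two sums with the same connection type and show $N=N'$, $B=B'$'' rather than giving an explicit reconstruction formula, and it derives the key fact $a\cdot V_i\in N_i^\bot$ via Theorem~\ref{thm:algdual} ($(N\times V_1)^\bot=N^\bot\cdot V_1$) instead of your dual computation $N^\bot\subseteq (N_1\oplus N_2)^\bot=N_1^\bot\oplus N_2^\bot$; but the substance---write each element of $N$ (resp.\ $b_s$) modulo $N_1\oplus N_2$ in terms of the $b_i^1,b_j^2$ and read off membership from $C_0$ (resp.\ $C_s$)---is identical.
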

\begin{proof}
  Suppose that both $(V,N,B)$ and $(V,N',B')$ are sums of 
  disjoint boundaried chain-groups $(V_1,N_1,B_1)$, $(V_2,N_2,B_2)$ 
  over a field $\F$ with the same connection type
  $(C_0,C_1,\ldots,C_{|B|})$.

  We first claim that $N=N'$. 
  By symmetry, 
  it is enough to show that $N\subseteq N'$.
  Let $a\in N$. Since $a\in N^\bot$ and
  $(N\times {V_1})^\bot=N^\bot\cdot V_1$ by Theorem~\ref{thm:algdual},
  we deduce that 
  $a\cdot V_1\in (N\times V_1)^\bot$ and similarly 
  $a\cdot V_2\in  (N\times V_2)^\bot$.
  Therefore there exists $(x,y)\in \F^{|B_1|}\times \F^{|B_2|}$ such that 
  \[
  f=\sum_{i=1}^{|B_1|} x_i b_i^1 - a\cdot V_1\in N_1
  \quad\text{and}\quad
  g=\sum_{j=1}^{|B_2|} y_j b_j^2 -a\cdot V_2\in N_2.
  \]
  Since $f\oplus 0\in N$ and $0\oplus g\in N$, we have $f\oplus g\in
  N$. We deduce that 
  $\sum_{i=1}^{|B_1|} x_i b_i^1 \oplus \sum_{j=1}^{|B_2|} y_jb_j^2 =
  a+(f\oplus g) \in N$.
  Therefore $(x,y)\in C_0$. So, $a+(f\oplus g)\in N'$ as well. 
  Since $f\oplus 0, 0\oplus g\in N'$, we have $a\in N'$.
  We conclude that $N\subseteq N'$.

  Now we show that $B=B'$. Let $b_s+N$ be the $s$-th element of $B$
  where $b_s\in N^\bot$. 
  Let $b_s'+N$ be the $s$-th element of $B'$ with $b_s'\in N^\bot$.
  Since $b_s\cdot V_1\in (N\times V_1)^\bot$
  and $b_s\cdot V_2\in (N\times V_2)^\bot$,
  there is $(x,y)\in \F^{|B_1|}\times \F^{|B_2|}$ such that 
  \[
  f=\sum_{i=1}^{|B_1|} x_i b_i^1 - b_s\cdot {V_1}\in N_1
  \quad\text{and}\quad
  g=\sum_{j=1}^{|B_2|} y_j b_j^2 -b_s\cdot V_2\in N_2.
  \]
  Since $f\oplus 0, 0\oplus g\in N$, we have $f\oplus g\in  N$. Therefore 
  $\sum_{i=1}^{|B_1|} x_ib_i^1 \oplus \sum_{j=1}^{|B_2|} y_j b_j^2 - b_s \in
  N$. This implies that $(x,y)\in C_s$ and therefore
  $\sum_{i=1}^{|B_1|} x_ib_i^1 \oplus \sum_{j=1}^{|B_2|} y_j b_j^2 - b_s' \in
  N'=N$.
  Thus, $b_s+N=b_s'+N$.
\end{proof}

In the next proposition, we prove that 
minors of a sum of disjoint boundaried chain-groups
are sums of minors of the boundaried chain-groups
with the same connection type. 
\begin{PROP}\label{prop:minorsum}
  Suppose that  a boundaried chain-group $(V,N,B)$ is  a sum of
  disjoint boundaried
  chain-groups 
  $(V_1,N_1,B_1)$, $(V_2,N_2,B_2)$
  over a field $\F$.
  Let  $(C_0,C_1,\ldots,C_{|B|})$ be the connection type of
  the sum. 
  If 
  \begin{align*}
  |V_1\setminus (X\cup Y)|-\dim (N_1\delete X\contract Y)
 &= |V_1|-\dim N_1\\
  \intertext{and}
  |V_2\setminus (Z\cup W)|-\dim (N_2\delete Z\contract W)
  &= |V_2|-\dim N_2,
  \end{align*}
  then $(V\setminus(X\cup Y\cup Z\cup W),N\delete (X\cup Z)\contract
  (Y\cup W),B\delete (X\cup Z)\contract
  (Y\cup W))$ is a well-defined minor of
  $(V,N,B)$.
  Moreover it is a sum of 
  $(V_1\setminus (X\cup Y),N_1\delete X\contract Y,B_1\delete
  X\contract Y)$ and $(V_2\setminus(Z\cup W),N_2\delete Z\contract W,
  B_2\delete Z\contract W)$
  with the connection type $(C_0,C_1,\ldots,C_{|B|})$.
\end{PROP}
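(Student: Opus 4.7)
The plan is to establish the proposition in three stages: (i) verify the dimension identity needed for the minor $(V',N',B')$ to be defined and for the induced boundary to be well-defined; (ii) identify the underlying chain-group of this minor as a sum of the two boundaried minors of $(V_1,N_1,B_1)$ and $(V_2,N_2,B_2)$; (iii) check that the connection type is preserved.

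For (i), I would argue by induction on $|X|+|Y|+|Z|+|W|$. By symmetry it suffices to treat a single deletion of some $v\in X\subseteq V_1$. The hypothesis forces $\dim N_1-\dim(N_1\delete\{v\})=1$, so by Proposition~\ref{prop:minordim} either both or neither of $\dup v\in N_1$ and $\dup v\in N_1^\bot$ hold. Since $\dup v$ is supported at $v\in V_1$, we have $\dup v\in N$ iff $\dup v\in N\times V_1=N_1$. Combined with the isotropy $N\subseteq N^\bot$ and the observation that any witness $h\in N_1$ for $\dup v\notin N_1^\bot$ is also a witness in $N$ for $\dup v\notin N^\bot$, the same dichotomy holds for the pair $(\dup v\in N,\dup v\in N^\bot)$. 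The local dimension drop for $N$ therefore also equals one, preserving $|V|-\dim N=|V\setminus\{v\}|-\dim(N\delete\{v\})$. Iterating (with the symmetric case for $v\in V_2$), the equality $|V'|-\dim N'=|V|-\dim N$ holds at every intermediate stage, so the preceding proposition on boundary deletion and contraction applies throughout and $B'=B\delete(X\cup Z)\contract(Y\cup W)$ is well-defined.

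For (ii), the core identity to establish is $N'\times V_1'=N_1\delete X\contract Y$, together with its $V_2$-symmetric analogue, where $V_1'=V_1\setminus(X\cup Y)$. The inclusion $\supseteq$ is immediate, since any chain in $N_1\delete X\contract Y$ extends by zero on $V_2\setminus(Z\cup W)$ to a chain in $N'$. For the reverse inclusion, take $f\in N'$ vanishing on $V_2'$ and lift it to $g\in N$ satisfying the orthogonality conditions attached to $X\cup Z$ and $Y\cup W$. The remaining task is to modify $g$ on $Z\cup W$ so that it vanishes on all of $V_2$; this is accomplished using the dimension identities from (i) together with Lemmas~\ref{lem:boundaryminor} and~\ref{lem:boundaryminor2}, which guarantee that the subspace of chains in $N$ supported on $V_2$ is large enough to absorb the values of $g$ on $Z\cup W$ subject to the required orthogonality. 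After the modification, $g\in N\times V_1=N_1$, so $f\in N_1\delete X\contract Y$. For (iii), fix chain representatives $b_i\in N^\bot$, $b_j^1\in N_1^\bot$, $b_k^2\in N_2^\bot$ of $B,B_1,B_2$. Using Lemma~\ref{lem:boundaryminor} and its contraction analogue, replace each representative (modulo $N$, $N_1$, $N_2$) by one already satisfying the orthogonality conditions imposed by the deletion and contraction operations; their restrictions then yield the representatives for $B',B_1',B_2'$. With this coherent choice, each defining membership condition $\sum x_i b_i^1\oplus\sum y_j b_j^2-b_s\in N$ in $C_s$ descends under restriction to the parallel condition inside $N'$, because the orthogonality constraints are identical on both sides. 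Hence $(C_0,\ldots,C_{|B|})$ is also the connection type of the sum of the two boundaried minors.

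The main obstacle lies in step (ii): producing a modified lift $g$ that vanishes on all of $V_2$. A generic lift may be nonzero on $Z\cup W$, and correcting it requires chains in $N$ whose restrictions to $V_2$ can match prescribed values on $Z\cup W$ subject to orthogonality—precisely the existence statement supplied by the dimension identities of step~(i) together with Lemmas~\ref{lem:boundaryminor} and~\ref{lem:boundaryminor2}. Once this technical step is in place, both the sum structure and the connection type follow by unwinding definitions.
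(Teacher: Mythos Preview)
Your three-stage plan is correct in outline, and step~(i) matches the paper. The paper, however, runs the \emph{entire} argument---not just the dimension count---by induction on $|X\cup Y\cup Z\cup W|$, reducing everything to a single deletion or contraction of one element $v$. In that one-element setting, step~(ii) becomes the pair of identities $N\delete\{v\}\times(V_1\setminus\{v\})=N_1\delete\{v\}$ and $N\delete\{v\}\times V_2=N_2$, the second of which is handled by the short trick of showing that if a lift $f'$ has $f'(v)\neq 0$ then $\dup v\in N_1^\bot$ (via $N_1^\bot=N^\bot\cdot V_1$), forcing $\dup v\in N_1\subseteq N$ so that $f'$ can be corrected by a multiple of $\dup v$. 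Your direct approach to~(ii) in full generality can be made to work, but it needs a mixed deletion/contraction version of Lemma~\ref{lem:boundaryminor2} applied to $N_2$; you allude to this but do not spell it out.

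The real gap is in step~(iii). You argue only that each membership condition in $C_s$ ``descends'' to the corresponding condition in $C_s'$, which gives $C_s\subseteq C_s'$. The reverse inclusion is not automatic: given $(x,y)\in C_s'$ you must lift the restricted relation back to $N$. In the paper's single-element setup this amounts to showing that if $\alpha=\big(\sum x_i b_i^1\big)\oplus\big(\sum y_j b_j^2\big)-b_s$ satisfies $\alpha\cdot(V\setminus\{v\})\in N\delete\{v\}$ and $\langle\alpha(v),\tbinom10\rangle_K=0$, then $\alpha\in N$. One compares $\alpha$ with a lift $\beta\in N$ and finds $\alpha-\beta=c\,\dup v$; the point is that $\alpha\cdot V_1\in N_1^\bot$ (since $b_i^1\in N_1^\bot$ and $b_s\cdot V_1\in N^\bot\cdot V_1=N_1^\bot$) and $\beta\cdot V_1\in N\cdot V_1\subseteq N_1^\bot$, so $c\,\dup v\in N_1^\bot$, which forces $c=0$ in the case $\dup v\notin N_1^\bot$ and is harmless in the case $\dup v\in N_1\subseteq N$. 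Your sketch does not supply this direction, and the phrase ``the orthogonality constraints are identical on both sides'' does not substitute for it.
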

\begin{proof}
  We proceed by induction on $|X\cup Y\cup Z\cup W|$. 
  If $X\cup Y\cup Z\cup W=\emptyset$, then it is trivial. 

  Suppose that $|X\cup Y\cup Z\cup W|=1$.
  By symmetry, we may assume that $Y=Z=W=\emptyset$. Let $v\in X$.
  Since $|V_1\setminus \{v\}|-\dim(N_1\delete\{v\}) = |V_1|-\dim N_1$, 
  either $\dup v\in N_1$ or $\dup v\notin N_1^\bot$ by Proposition~\ref{prop:minordim}.
  Since $N_1=N\times V_1$, 
  we deduce that either $\dup v\in N$ or 
  $\dup v\notin N^\bot$. Thus, $|V\setminus\{v\}|-\dim (N\delete\{v\})=|V|-\dim N$
  and so $(V\setminus \{v\},N\delete\{v\},B\delete \{v\})$ is a minor
  of $(V,N,B)$.

  To show that $(V\setminus \{v\},N\delete\{v\},B\delete \{v\})$ is a
  sum of $(V_1\setminus\{v\},N_1\delete\{v\},B\delete\{v\})$ and
  $(V_2,N_2,B_2)$, it is enough to show that 
  \begin{align}
    \label{eq:sum1}
    N\times V_1\delete\{v\}&=N\delete \{v\}\times
    (V_1\setminus\{v\}),\\
    \label{eq:sum2}
    N\times V_2&=N\delete\{v\}\times V_2.
  \end{align}
  It is easy to see \eqref{eq:sum1} and
  $N\times V_2\subseteq N\delete\{v\}\times V_2$.
  We claim that $N\delete\{v\}\times V_2\subseteq N\times V_2$.
  Suppose that $f$ is a chain in $N\delete\{v\}\times V_2$. 
  There exists a chain $f'$  in $N$ such that
  $f'\cdot V_2=f$,
  $\form{f'(v),\tbinom10}_K=0$, and 
  $f'(x)=0$ for all $x\in V\setminus (V_2\cup\{v\})=V_1\setminus\{v\}$.

  If $f'(v)\neq 0$, then $f'\cdot V_1=c\dup v$ for a nonzero $c\in
  \F$ by Lemma~\ref{lem:basic}.
  Since $N_1^\bot=N^\bot\cdot V_1$ (Theorem~\ref{thm:algdual}), 
  we deduce $\dup v=c^{-1}f'\cdot V_1\in N_1^\bot$. 
  Therefore $\dup v\in N_1$ and so $\dup v \in N$.
  We may assume
  that $f'(v)=0$
  by adding a multiple of $\dup v$ to $f'$.
  This implies that $f\in N\times V_2$.
  We conclude \eqref{eq:sum2}.

  Let $(C_0',C_1',\ldots,C_{|B|}')$ be the connection type of the 
  sum of $(V_1\setminus\{v\},N_1\delete\{v\},B_1\delete\{v\})$ and
  $(V_2,N_2,B_2)$.
  Let  $B=\{b_1+N,b_2+N,\ldots,b_{|B|}+N\}$,
  $B_1=\{b^1_1+N_1,b^1_2+N_1,\ldots,b_{|B_1|}^1+N_1\}$, and 
  $B_2=\{b^2_1+N_2,b^2_2+N_2,\ldots,b^2_{|B_2|}+N_2\}$.
  We may assume that
  $\form {b_i(v),\tbinom 10}_K=0$ and
  $\form {b_i^1(v),\tbinom 10}_K=0$ by Lemma~\ref{lem:boundaryminor}.
  
  We claim that $C_s=C_s'$ for all $s\in \{0,1,\ldots,|B|\}$.
  Let $g$ be a chain in $N^\bot$ such that
  $g=0$ if $s=0$ 
  or $g=b_s$ otherwise.
  If $(x,y)\in C_s$, then 
  \begin{equation}
    \label{eq:sumproof1}
    \left(\sum_{i=1}^{|B_1|} x_i b_i^1 
    \oplus
    \sum_{j=1}^{|B_2|} y_j b_j^2 \right)-g \in N.
  \end{equation}
  Since $\form {b_i^1(v),\tbinom 10}_K=0$
  and $\form{g(v),\tbinom 10}_K=0$, we conclude that 
  \begin{equation}
    \label{eq:sumproof2}
    \left(\sum_{i=1}^{|B_1|} x_i b_i^1 \cdot (V_1\setminus\{v\})
    \oplus
    \sum_{j=1}^{|B_2|} y_j b_j^2\right)
  -g\cdot (V\setminus\{v\}) \in
    N\delete\{v\},
  \end{equation}
  and therefore $(x,y)\in C_s'$. 
  
  Conversely suppose that $(x,y)\in C_s'$.
  Then \eqref{eq:sumproof2} is true. 
  By Lemma~\ref{lem:boundaryminor2}, we deduce 
  \eqref{eq:sumproof1}. Therefore $(x,y)\in C_s$.

  To complete the inductive proof, we now assume that $|X\cup Y\cup
  Z\cup W|>1$. If $X$ is nonempty, let $v\in X$. Let $X'=X\setminus \{v\}$.
  Then, by Corollary~\ref{cor:minordim} we have 
  $|V_1\setminus \{v\}|-\dim  N_1\delete \{v\}=|V_1|-\dim
  N_1$.
  So $(V_1\setminus\{v\},N\delete\{v\},B\delete\{v\})$ is the
  sum of $(V_1\setminus\{v\},N_1\delete\{v\},B_1\delete\{v\})$
  and $(V_2,N_2,B_2)$
  with the connection type $(C_0,C_1,\ldots,C_{|B|})$.
  We deduce our claim by applying the induction hypothesis 
  to $(V_1\setminus\{v\},N_1\delete\{v\},B_1\delete\{v\})$ 
  and $(V_2,N_2,B_2)$.
  Similarly if one of $Y$ or $Z$ or $W$ is nonempty, we deduce our
  claim.
\end{proof}
\subsection{Linked branch-decompositions}
Suppose  $(T,\mathcal L)$ is a branch-decomposition of a
Lagrangian chain-group $N$ on $V$ to $K=\F^2$.
For two edges $f$ and $g$ of $T$,
let $F$ be the set of elements in $V$ corresponding to the leaves in the
component of $T\setminus f$ not containing $g$
and let $G$ be the set of elements in $V$ corresponding to the leaves
in the component of $T\setminus g$ not containing $f$.
Let $P$ be the unique path from $e$ to $f$ in $T$.
We say that $f$ and $g$ are \emph{linked} if 
the minimum width of the edges on $P$
is equal to $\min_{F\subseteq X\subseteq V\setminus G} \lambda_N(X)$.
We say that a branch-decomposition $(T,\mathcal L)$ is \emph{linked}
if every pair of edges in $T$ is linked.

The following lemma is shown by Geelen, Gerards, and
Whittle~\cite{GGW2002,GGW2002cor}.  We state it in terms of Lagrangian
chain-groups, because 
the connectivity function of chain-groups are symmetric
submodular (Theorem~\ref{thm:submodular}).
\begin{LEM}[{Geelen et al.\ \cite[Theorem (2.1)]{GGW2002,GGW2002cor}}]\label{lem:linking}
  A chain-group of branch-width $n$ has
  a linked branch-decomposition of width $n$.
\end{LEM}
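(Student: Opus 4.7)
The plan is to follow the strategy of Geelen, Gerards, and Whittle, whose argument uses only that the connectivity function is an integer-valued symmetric submodular function together with the min-max form of Tutte's linking theorem. For Lagrangian chain-groups all three ingredients are available: $\lambda_N$ is integer-valued by Lemma~\ref{lem:even}, symmetric directly from the definition, submodular by Theorem~\ref{thm:submodular}, and satisfies the min-max identity by Theorem~\ref{thm:tutte}.

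First I would set up a lexicographic invariant. For each branch-decomposition $(T,\mathcal{L})$ of $N$ of width at most $n$, let $\sigma(T,\mathcal{L})$ be the sequence of edge-widths $w(e)=\lambda_N(\mathcal{L}^{-1}(X_e))$ sorted in nonincreasing order. Among all width-at-most-$n$ branch-decompositions of $N$, pick one minimizing $\sigma$ in the lexicographic order; such a minimizer exists because $|V|$ is finite and the $w(e)$ are bounded nonnegative integers.

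Next I would prove by contradiction that this chosen $(T,\mathcal{L})$ is linked. Suppose edges $f$ and $g$ are not linked, with associated leaf-sets $F$, $G$ as in the definition, and let $P$ be the $f$-$g$ path in $T$. Set $k=\min_{e\in P}w(e)$; by hypothesis there is a set $X$ with $F\subseteq X\subseteq V\setminus G$ and $\lambda_N(X)<k$. Choose $e^{*}\in P$ with $w(e^{*})=k$. The aim is to build a new branch-decomposition $(T',\mathcal{L}')$ of width at most $n$ with $\sigma(T',\mathcal{L}')<\sigma(T,\mathcal{L})$.

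The modification is the standard uncrossing along $P$. Let $X_{e^{*}}$ be the side of the partition at $e^{*}$ containing $F$. By repeatedly applying the submodular inequality
\[
\lambda_N(A)+\lambda_N(B)\ge \lambda_N(A\cup B)+\lambda_N(A\cap B)
\]
to $X$ against the leaf-sets of the maximal subtrees of $T\setminus e^{*}$ on the $F$-side, and using symmetry on the $G$-side, I obtain a set $X'$ with $F\subseteq X'\subseteq V\setminus G$, $\lambda_N(X')\le \lambda_N(X)<k$, that is laminar with respect to the partitions induced by the edges of $P$. Laminarity lets me realize $X'$ by a tree-surgery: replace $e^{*}$ by a new edge whose partition is $(X',V\setminus X')$, rerouting the subtrees hanging off $P$ into the appropriate side. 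The new tree is subcubic with the same leaf set; the replacement edge has width $\lambda_N(X')<k$, and every other edge's width is bounded by that of a corresponding edge of $T$, again by submodularity.

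The main obstacle is the last bookkeeping step: verifying that the rerouting across $P$ does not raise the width of any surviving edge. One must check that, for each surviving edge, the new partition $(X'_e,V\setminus X'_e)$ satisfies $\lambda_N(X'_e)\le \lambda_N(X_e)$, which is the content of the symmetric-submodular uncrossing applied edge-by-edge along $P$. Once this is established, $\sigma(T',\mathcal{L}')$ is strictly smaller than $\sigma(T,\mathcal{L})$ because the occurrence of $k$ at position $e^{*}$ in the sorted sequence is replaced by a smaller value while no later entry grows, contradicting minimality. Hence the chosen $(T,\mathcal{L})$ is linked, proving the lemma.
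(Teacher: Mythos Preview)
The paper does not give a proof of this lemma; it simply cites Geelen, Gerards, and Whittle and remarks in the surrounding text that their Theorem~(2.1) applies verbatim because $\lambda_N$ is symmetric and submodular (Theorem~\ref{thm:submodular}) and integer-valued (Lemma~\ref{lem:even}). Your proposal goes further and sketches the actual GGW argument, and the framework you describe---lexicographic minimization of the sorted width vector, then a submodular uncrossing to produce a strictly smaller decomposition if linkedness fails---is exactly their strategy.

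Two remarks. First, Tutte's linking theorem (Theorem~\ref{thm:tutte}) is \emph{not} an ingredient here; you list it in your opening paragraph but never use it, and GGW's proof requires only that the connectivity function be integer-valued, symmetric, and submodular. The linking theorem enters the paper only later, in the proof of Theorem~\ref{thm:main}, where the linkedness of the decomposition is combined with Corollary~\ref{cor:tutte} to propagate boundaries. Second, the tree-surgery step you describe (``replace $e^{*}$ by a new edge whose partition is $(X',V\setminus X')$, rerouting the subtrees\ldots'') is the genuinely delicate part of the argument---the original GGW paper contained an error precisely here, repaired in \cite{GGW2002cor}---so a complete proof needs more than the one-line description you give, although you correctly flag it as the main obstacle.
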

Having a linked branch-decomposition will be very useful for proving
well-quasi-ordering because it allows Tutte's linking
theorem to be used.
It was the first step to prove well-quasi-ordering
of matroids of bounded
branch-width by Geelen et al.~\cite{GGW2002}.
An analogous theorem by Thomas \cite{Thomas1990} was used to
prove well-quasi-ordering of graphs of bounded tree-width in
\cite{RS1990}.

\subsection{Lemma on cubic trees}
We use ``lemma on trees,'' proved
by Robertson and Seymour~\cite{RS1990}. It has been used by Robertson and Seymour to prove that a set of graphs of bounded
tree-width is well-quasi-ordered by the graph minor relation. It has been also used
by Geelen et al.\ \cite{GGW2002} 
to prove that a set of matroids representable over a fixed finite
field and having  bounded branch-width 
is well-quasi-ordered by the matroid minor relation.
We need a special case of ``lemma on trees,'' in which a
given forest is cubic, which was also useful for
branch-decompositions
of matroids in \cite{GGW2002}.

The following definitions are  in \cite{GGW2002}.
A \emph{rooted tree} is a finite directed tree where all but one of the 
vertices have indegree 1. A \emph{rooted forest} is a collection of
countably many vertex disjoint rooted trees. Its vertices with
indegree 0 are called \emph{roots} and those with outdegree 0 are
called \emph{leaves}. Edges leaving a root are \emph{root edges} and
those entering a leaf are \emph{leaf edges}.

An \emph{$n$-edge labeling} of a graph $F$ is a map from the set of
edges of $F$
to the set $\{0,1,\ldots,n\}$. Let $\lambda$ be an $n$-edge
labeling of a rooted forest $F$ and let $e$ and $f$ be edges in $F$.
We say that $e$ \emph{is $\lambda$-linked to} $f$ if $F$ contains a
directed path $P$ starting with $e$ and ending with $f$ such that
$\lambda(g)\ge \lambda(e)=\lambda(f)$ for every edge $g$ on $P$.

A \emph{binary forest} is a rooted orientation of a cubic forest with a
distinction between left and right outgoing edges. More precisely, we
call a triple $(F,l,r)$ a \emph{binary forest} 
if $F$ is  a rooted forest where roots have outdegree 1 and $l$
and $r$ are functions defined on non-leaf edges of $F$, such that
the head of each non-leaf edge $e$ of $F$ has exactly two outgoing
edges, namely $l(e)$ and $r(e)$.

\begin{LEM}[{Geelen et al.\ \cite[(3.2)]{GGW2002}}]\label{lem:cubic}
  Let $(F,l,r)$ be an infinite binary forest with an $n$-edge
  labeling $\lambda$. Moreover, let $\le$ be a quasi-order on the set
  of edges of $F$
  with no infinite strictly descending sequences, such
  that $e\le f$ whenever $f$ is $\lambda$-linked to $e$. If the set of
  leaf
  edges of $F$ is well-quasi-ordered by $\le$ but the set of root edges of
  $F$ is not, then $F$ contains an infinite sequence
  $(e_0,e_1,\ldots)$ of non-leaf edges such that 
  \begin{enumerate}[(i)]
  \item $\{e_0,e_1,\ldots\}$ is an antichain with respect to $\le$,
  \item $l(e_0)\le l(e_1)\le l(e_2)\le \cdots$,
  \item $r(e_0)\le r(e_1) \le r(e_2)\le \cdots$.
  \end{enumerate}
\end{LEM}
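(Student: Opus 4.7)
The plan is to apply a Nash--Williams style minimal-bad-sequence argument, adapted to the binary-forest structure and combined with pigeonhole on the finite label set $\{0,1,\ldots,n\}$. First I would use the hypothesis that the root edges are not well-quasi-ordered to produce an infinite bad sequence of edges; since $\le$ has no infinite strictly descending sequences, a standard extraction yields an infinite $\le$-antichain. I would then construct a \emph{minimal} antichain $e_0, e_1, \ldots$ among all edges of $F$ recursively: having chosen $e_0, \ldots, e_{i-1}$, pick $e_i$ to be $\le$-incomparable to all previous $e_j$ and extendible to an infinite antichain, while minimizing the depth of the head of $e_i$ in its rooted tree (or a suitable lexicographic refinement). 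The well-foundedness of the depth measure on the rooted forest guarantees that such minimal choices exist.

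Second, since leaf edges are well-quasi-ordered, no infinite subsequence of the $e_i$ can consist of leaf edges; passing to a subsequence, I may assume every $e_i$ is a non-leaf edge, and a further pigeonhole on $\lambda(e_i) \in \{0,1,\ldots,n\}$ yields a subsequence with constant label $\ell$. The crucial third step is to argue that both sequences $\bigl(l(e_i)\bigr)$ and $\bigl(r(e_i)\bigr)$ are well-quasi-ordered by $\le$. Suppose $\bigl(l(e_i)\bigr)$ were not: extract an infinite $\le$-antichain $l(e_{i_0}), l(e_{i_1}), \ldots$. Replacing $e_{i_0}$ by $l(e_{i_0})$ in the original minimal bad sequence yields a sequence whose head has strictly smaller depth; if this new sequence is still a valid antichain, it contradicts the minimality of $e_{i_0}$. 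Verifying that the antichain property survives is precisely where the $\lambda$-linking hypothesis is used: any comparability $e_j \le l(e_{i_0})$ would, through the one-edge path at $l(e_{i_0})$ with label at least $\ell$, transfer to the forbidden comparability $e_j \le e_{i_0}$, and the symmetric direction is handled analogously.

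Finally, with both $\bigl(l(e_i)\bigr)$ and $\bigl(r(e_i)\bigr)$ well-quasi-ordered, I apply Higman's lemma to the sequence of pairs $\bigl(l(e_i), r(e_i)\bigr)$ under the product quasi-order to extract an infinite subsequence $e_{i_0}, e_{i_1}, \ldots$ along which both coordinates are simultaneously $\le$-increasing. This is exactly the required antichain satisfying (i), (ii), and (iii).

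The main obstacle will be step three: the minimality argument must be set up with enough care that substituting a child $l(e_{i_0})$ for $e_{i_0}$ yields a genuinely bad sequence, which requires the $\lambda$-linking hypothesis to propagate comparabilities between replaced edges and their ancestors in exactly the right way, and requires an appropriate well-founded measure (most likely a lexicographic pair of depth and label) that remains strictly decreased by the substitution. Tracking labels carefully so that the linking condition applies along each swap is the delicate combinatorial core.
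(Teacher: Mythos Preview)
The paper does not prove this lemma; it is quoted from Geelen, Gerards, and Whittle, who in turn derive it from Robertson and Seymour's ``lemma on trees'' in Graph Minors~IV. Your minimal-bad-sequence strategy is indeed the method used in those sources, so the outline is correct in spirit.

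There are, however, two concrete problems. First, your minimality measure is oriented the wrong way. In a rooted tree the head of $l(e_{i_0})$ lies \emph{farther} from the root than the head of $e_{i_0}$, so ``depth of the head'' \emph{increases} under the substitution $e_{i_0}\mapsto l(e_{i_0})$, and you obtain no contradiction to minimality. You need a measure for which children are strictly smaller---for instance the number of edges in the subtree below the head, which is finite here since each component tree is finite.

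Second, and more seriously, the verification that the substituted sequence $e_0,\ldots,e_{i_0-1},l(e_{i_0}),l(e_{i_1}),\ldots$ is still bad does not go through as written. You need $e_j\not\le l(e_{i_k})$ for $j<i_0$, and you propose to get this from $l(e_{i_k})\le e_{i_k}$ via $\lambda$-linking along the two-edge path $e_{i_k},\,l(e_{i_k})$. But the definition of $\lambda$-linked requires the \emph{equality} $\lambda(e_{i_k})=\lambda(l(e_{i_k}))$, not merely $\lambda(l(e_{i_k}))\ge\ell$; you have arranged constant label $\ell$ only for the $e_i$, and nothing constrains $\lambda(l(e_{i_k}))$. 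Without this equality the hypothesis gives you no relation between $l(e_{i_k})$ and $e_{i_k}$, and the minimality contradiction collapses. The fix in the original proofs is not a simple lexicographic refinement of the measure: one must set up the minimal bad sequence so that the edges used in the substitution step are genuinely $\lambda$-linked to the edges they replace (roughly, by pushing each $e_i$ down along $\lambda$-linked paths as far as possible before declaring it minimal). You have correctly located the crux, but the sketch as it stands does not close it.
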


\subsection{Main theorem}
We are now ready to prove our main theorem. 
To make it more useful, we label each element of the ground set by a
well-quasi-ordered set $Q$  with an ordering $\preceq$
and enforce the minor relation to follow the ordering $\preceq$. 
More precisely, 
for a chain-group $N$ on $V$ to $K$, a \emph{$Q$-labeling} is a mapping from
$V$ to $Q$.
A \emph{$Q$-labeled chain-group} is a chain-group equipped with a
$Q$-labeling. 
A $Q$-labeled chain-group $N'$ on $V'$ to $K$ with a $Q$-labeling $\mu'$ is a
\emph{$Q$-minor} of a $Q$-labeled chain-group $N$ with a $Q$-labeling
$\mu$
if $N'$ is a minor of $N$ 
and $\mu'(v)\preceq \mu(v)$ for all $v\in V'$.

\begin{THMMAIN}[Labeled version]
  Let $Q$ be a well-quasi-ordered set with an ordering $\preceq$.
  Let $k$ be a constant.
  Let $\F$ be a finite field.
  Let $N_1,N_2,\ldots$ be an infinite sequence of $Q$-labeled Lagrangian
  chain-groups over $\F$
  having branch-width at most $k$.
  Then there exist $i<j$ such that 
  $N_i$ is simply isomorphic to a $Q$-minor of $N_j$.
\end{THMMAIN}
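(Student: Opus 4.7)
The plan is to argue by contradiction following the scheme of Geelen, Gerards, and Whittle~\cite{GGW2002}, with Lagrangian chain-groups playing the role matroids played there. Suppose $N_1, N_2, \ldots$ is a counterexample, so for no $i<j$ is $N_i$ simply isomorphic to a $Q$-minor of $N_j$. By Lemma~\ref{lem:linking} each $N_i$ admits a linked branch-decomposition $(T_i,\mathcal L_i)$ of width at most $k$. I would subdivide a chosen edge of each $T_i$ and designate the new vertex as a root, then orient each $T_i$ away from the root and arbitrarily label the two outgoing edges at each internal vertex as left and right; the disjoint union $F$ becomes an infinite binary forest whose root edges correspond to the chain-groups $N_i$ and whose leaf edges correspond to single $Q$-labeled elements. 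For every edge $e$ of $F$, let $V_e$ be the set of elements of the ground set lying below $e$ and set $\lambda(e)=\lambda_{N_i}(V_e)$; since the branch-decomposition has width at most $k$, this is a $k$-edge labeling.

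To each edge $e$ I attach a boundaried $Q$-labeled chain-group $(V_e, N_i\times V_e, B_e)$, where $B_e$ is an ordered basis of $(N_i\times V_e)^\bot/(N_i\times V_e)$ chosen consistently with deletions and contractions so that the boundary-rewriting formalism of Section~\ref{sec:wqochain} applies. I define the quasi-order $e\le f$ to hold whenever the boundaried $Q$-labeled chain-group at $e$ is simply isomorphic to a $Q$-minor of the boundaried $Q$-labeled chain-group at $f$ in the sense of boundaried chain-groups defined earlier. The key properties to verify are: (a) $\le$ is transitive (minor of a minor is a minor, proved for boundaried chain-groups); (b) $\le$ has no infinite strictly descending chain, because each minor step leaves $|V|-\dim N$ fixed while strictly reducing $|V|$; (c) leaf edges are well-quasi-ordered by $\le$ because $Q$ is; and (d) root edges are not well-quasi-ordered by $\le$, by the standing contradiction hypothesis. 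Most crucially I must verify that whenever $f$ is $\lambda$-linked to $e$ along a directed path in $F$, one has $e\le f$; this is where the linked property pays off, because Corollary~\ref{cor:tutte} applied along the path produces a sequence of contractions and deletions in the interior taking $N_i\times V_f$ down to a minor $M$ with $M\times V_e = N_i\times V_e$, and the boundary formalism $B\delete X\contract Y$ transports the boundary correctly.

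With these properties in hand, Lemma~\ref{lem:cubic} supplies an infinite sequence of non-leaf edges $e_0,e_1,\ldots$ that is a $\le$-antichain while $l(e_0)\le l(e_1)\le\cdots$ and $r(e_0)\le r(e_1)\le\cdots$. Each $e_i$ realises its boundaried chain-group as a sum of those at $l(e_i)$ and $r(e_i)$ with a connection type $(C_0,C_1,\ldots,C_{|B_{e_i}|})$ in the sense of the sums defined before Proposition~\ref{prop:minorsum}. Since widths are at most $k$, the boundaries at $e_i, l(e_i), r(e_i)$ have size at most $2k$, and since $\F$ is finite, there are only finitely many possible connection types. Pigeonholing, refine to an infinite subsequence on which $e_i$, $l(e_i)$, $r(e_i)$ all have fixed boundary sizes and all $e_i$ carry the same connection type. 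Applying Proposition~\ref{prop:minorsum} to $e_0$ and $e_1$, the $Q$-minor relations $l(e_0)\le l(e_1)$ and $r(e_0)\le r(e_1)$ on the two summands, combined with the common connection type, glue to a $Q$-minor relation $e_0\le e_1$, contradicting the antichain property.

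The main obstacle I expect is in step (a)--(b) above: choosing the boundary $B_e$ canonically enough that both transitivity of $\le$ and the linked-path step function correctly, and ensuring that the formalism of minors of boundaried chain-groups composes without losing the identification needed to match connection types across the pigeonhole. The bookkeeping here is the analogue of Geelen--Gerards--Whittle's argument but must be carried out over $K=\F^2$ and respect the bilinear form on $K$; once the framework of Sections~\ref{sec:lagrangian}, \ref{sec:matrix}, and~\ref{sec:tutte} is in place, the remainder reduces to careful accounting in the style of \cite{GGW2002}.
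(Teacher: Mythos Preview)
Your proposal is correct and follows essentially the same route as the paper: linked branch-decompositions, boundaried chain-groups attached to each edge with boundaries built inductively via Corollary~\ref{cor:tutte}, the cubic-forest lemma, and then pigeonholing on connection types together with Proposition~\ref{prop:minorsum} for the contradiction. The only small discrepancies are cosmetic: the paper first passes to a subsequence on which all $\form{\,,\,}_K$ are of one fixed type (symmetric or skew-symmetric), and it attaches a new degree-one vertex as the root rather than using the subdivision vertex itself, so that roots have outdegree one as required by Lemma~\ref{lem:cubic}; your justification of (b) should also invoke that $Q$ is well-quasi-ordered, since a strict step in $\le$ need not strictly reduce $|V|$ when only the $Q$-labels drop.
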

\begin{proof}
  We may assume that all bilinear forms $\form{\,,\,}_K$ for all
  $N_i$'s are the same bilinear form, that is either skew-symmetric or
  symmetric
  by taking a subsequence.
  Let $V_i$ be the ground set of $N_i$.
  Let $\mu_i:V_i\to Q$ be the $Q$-labeling of $N_i$.
  We may assume that $|V_i|>1$ for all $i$.
  By Lemma~\ref{lem:linking}, there is a linked branch-decomposition 
  $(T_i,\mathcal L_i)$ of $N_i$ of width at most $k$ for each $i$.
  Let $T$ be  a forest such that the $i$-th component is $T_i$. 
  To make $T$ a binary forest, 
  for each $T_i$, 
  we create a vertex $r_i$ of degree $1$, called a \emph{root}, 
  create a vertex of degree $3$ by subdividing an edge of $T_i$
  and making it adjacent to $r_i$,
  and direct every edge of $T_i$ so
  that each leaf has a directed path from the root $r_i$.

  We now define a $k$-edge labeling $\lambda$ of $T$, necessary for Lemma~\ref{lem:cubic}.
  For each edge $e$ of $T_i$, let $X_e$ be the set of leaves of $T_i$
  having a directed path from $e$. Let $A_e=\mathcal L_i^{-1}(X_e)$.
  We let $\lambda(e)=\lambda_{N_i}(A_e)$.  

  We want to associate each edge $e$  of $T_i$ 
  with a $ Q$-labeled boundaried chain-group
  $P_e=(A_e,N_i\times A_e,B_e)$ with a $Q$-labeling
  $\mu_e=\mu_i|_{A_e}$
  and some boundary $B_e$ satisfying  the following property:
  \begin{equation}
    \label{eq:2}
    \text{if $f$ is $\lambda$-linked to $e$, then 
      $P_e$ is a $Q$-minor of $P_f$.}
  \end{equation}
  We note that $\mu_i|_{A_e}$ is  a function on $A_e$ such that
  $\mu_i|_{A_e}(x)=\mu_i(x)$ for all $x\in A_e$.

  We claim that we can assign $B_e$ to satisfy \eqref{eq:2}. We prove
  it by induction on the length of the directed path from the root
  edge of $T_i$ to an edge $e$ of $T_i$.
  If no other edge is $\lambda$-linked to $e$, 
  then let $B_e$ be an arbitrary boundary of $N_i\times A_e$.
  If $f$, other than $e$, is $\lambda$-linked to $e$, then choose $f$
  such that the distance between $e$ and $f$ is minimal. We claim that
  we can obtain
  $B_e$ from $B_f$ by Corollary~\ref{cor:tutte} (Tutte's
  linking theorem) as follows; since $T_i$ is a linked
  branch-decomposition, for all $Z$, if $A_e\subseteq Z\subseteq A_f$,
  then $\lambda_{N_i}(Z)\ge \lambda_{N_i}(A_e)$. 
  By Corollary~\ref{cor:tutte}, there exist disjoint subsets $C$ and
  $D$ of $A_f\setminus A_e$ such that
  $N\times A_e=N\times A_f\contract C\delete D$.
  Since $|A_e|-\dim N_i\times A_e = |A_f|-\dim N_i\times A_f$, 
  $B_e=B_f\contract C\delete D$ is well defined.
  This proves the claim.

  For $e,f\in E(T)$, we write $e\le f$ when a $Q$-labeled boundaried 
  chain-group $P_e$ is simply isomorphic to a $Q$-minor of $P_f$.
  Clearly $\le$ has no infinitely strictly descending sequences, 
  because 
  there are finitely many boundaried chain-groups on bounded
  number of elements  up to simple isomorphisms
  and furthermore 
  $Q$ is well-quasi-ordered.
  By construction, if $f$ is $\lambda$-linked  to $e$, then $e\le f$.

  The leaf edges of $T$ are well-quasi-ordered because there are only
  finite many distinct boundaried chain-groups on one element up to simple isomorphisms
  and $\mathcal Q$ is well-quasi-ordered.

  Suppose that the root edges are not well-quasi-ordered by the
  relation $\le$. 
  By Lemma~\ref{lem:cubic},
  $T$ contains an infinite sequence $e_0,e_1,\ldots$ of non-leaf edges
  such that 
  \begin{enumerate}[(i)]
  \item $\{e_0,e_1,\ldots\}$ is an antichain with respect to $\le$,
  \item $l(e_0)\le l(e_1)\le \cdots$,
  \item $r(e_0)\le r(e_1)\le \cdots$.
  \end{enumerate}
  Since $\lambda(e_i)\le k$ for all $i$, we may assume that
  $\lambda(e_i)$ is a constant for all $i$, by taking a subsequence.

  The boundaried chain-group $P_{e_i}$ is the sum of 
  $P_{l(e_i)}$ and $P_{r(e_i)}$.
  The number of possible distinct
  connection types for this sum
  is finite,
  because $\F$ is finite and $k$ is fixed, 
  Therefore, we may assume  that the connection types for all sums for
  all $e_i$
  are same for all $i$, by taking a subsequence.
  
  Since $l(e_0)\le l(e_1)$, there exists a simple isomorphism $s_l$ from 
  $A_{l(e_0)}$ to a subset of $A_{l(e_1)}$.
  Similarly, there exists a simple isomorphism $s_r$ from 
  $A_{r(e_0)}$ to a subset of $A_{r(e_1)}$ in $r(e_0)\le r(e_1)$.
  Let $s$ be a function on $A_{e_0}=A_{l(e_0)}\cup A_{r(e_0)}$ such
  that
  $s(v)=s_l(v)$ if $v\in A_{l(e_0)}$ and $s(v)=s_r(v)$ otherwise.
  By Proposition~\ref{prop:minorsum}, $P_{e_0}$ is 
  simply isomorphic to a $Q$-minor of $P_{e_1}$ 
  with the simple isomorphism $s$.
  Since $l(e_0)\le l(e_1)$ and $r(e_0)\le r(e_1)$, we deduce that 
  $P_{e_0}$ is simply isomorphic to a $Q$-minor of 
  $P_{e_1}$ 
  and therefore $e_0\le e_1$. This contradicts to (i).
  Hence we conclude that the root edges are well-quasi-ordered by
  $\le$.
  So there exist $i<j$ such that $N_i$ is simply isomorphic to a
  $Q$-minor of $N_j$.
\end{proof}


\section{Well-quasi-ordering of skew-symmetric or symmetric matrices}\label{sec:wqomat}
In this section, we will prove the following main theorem 
for skew-symmetric or symmetric matrices
from 
Theorem~\ref{thm:main}.
\begin{THM}\label{thm:mainmatrix}
  Let $\F$ be a finite field and let $k$ be a constant. 
  Every infinite sequence $M_1$, $M_2$, $\ldots$  
  of skew-symmetric or
  symmetric matrices over $\F$
  of rank-width at most $k$
 has a pair $i<j$ such that 
  $M_i$ is isomorphic to a principal submatrix of 
 $(M_j/A)$
  for some nonsingular principal submatrix $A$ of $M_j$.
\end{THM}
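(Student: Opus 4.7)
The plan is to deduce Theorem~\ref{thm:mainmatrix} from the chain-group version Theorem~\ref{thm:main} via the correspondence developed in Sections~\ref{sec:lagrangian} and~\ref{sec:matrix}. First, for each matrix $M_i$ in the given sequence, apply Proposition~\ref{prop:matrix2chain} with the canonical supplementary chains $a_i(v) = \tbinom{1}{0}$ and $b_i(v) = \tbinom{0}{1}$ to obtain a Lagrangian chain-group $N_i$ on $V_i$ over $K = \F^2$ for which $(M_i, a_i, b_i)$ is a special matrix representation. By passing to a subsequence we may assume $\form{\,,\,}_K$ is of a single type throughout. The corollary at the end of Section~\ref{sec:matrix} identifies $\bw(N_i) = \rwd(M_i) \le k$, so Theorem~\ref{thm:main} yields $i<j$ and a simple isomorphism $\mu : V_i \to V'_i \subseteq V_j$ realizing $N_i$ as a minor $\tilde N_i = N_j\delete X\contract Z$ of $N_j$, for some partition $V_j = X \sqcup Z \sqcup V'_i$.

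The task is then to translate this chain-group minor into the Schur complement statement of the theorem. Assuming for the moment that $M_j[Z]$ is nonsingular, Proposition~\ref{prop:chainpivot} supplies an alternative special matrix representation of $N_j$ built from $M_j * Z$ (or $I_Z(M_j*Z)$ in the symmetric $M_j$ case) whose supplementary chains remain canonical on $V'_i$, because the swapping occurs only on $Z$ and $V'_i \cap Z = \emptyset$. Iterating Lemma~\ref{lem:minor} across the elements of $X \cup Z$ (in the pivoted representation, taking the standard principal submatrix realizes chain-group deletion on $X$, where $a = \tbinom{1}{0}$, and chain-group contraction on $Z$, where $a = \tbinom{0}{1}$ after the pivot) produces a special matrix representation of $\tilde N_i$ whose matrix is $(M_j*Z)[V'_i] = (M_j/M_j[Z])[V'_i]$ with canonical supplementary chains on $V'_i$. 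Since $\mu$ transports the canonical supplementary chains of $N_i$ to those of $\tilde N_i$, the uniqueness of the fundamental basis in Proposition~\ref{prop:chain2matrix} forces $M_i$, after relabeling by $\mu$, to coincide with $(M_j/M_j[Z])[V'_i]$. Setting $A = M_j[Z]$ then exhibits $M_i$ as isomorphic to a principal submatrix of $M_j/A$.

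The main obstacle I anticipate is ensuring that the decomposition can be chosen so that $M_j[Z]$ is nonsingular. The partition $(X,Z)$ produced by the chain-group minor is not unique, and a given one may yield a singular $M_j[Z]$, so one must modify it. My plan is to use the fact that $N_i$, being built from $M_i$, has the canonical chain as a special eulerian chain; this property transfers via $\mu$ to $\tilde N_i$, and combined with Proposition~\ref{prop:eulerian} it should show that the chain $a'$ on $V_j$ agreeing with the canonical chain on $V'_i \cup X$ and swapped on $Z$ is special eulerian for $N_j$, whence $M_j[Z]$ is nonsingular. As a fallback one can invoke Theorem~\ref{thm:minor} directly and verify that the diagonal $\pm 1$ matrix from Theorem~\ref{thm:equivmatrix} is the identity on $V'_i$, since both fundamental matrices being compared employ the canonical supplementary chains, so the set $Y$ of disagreement in Theorem~\ref{thm:equivmatrix}(ii) is empty and no negation remains to obstruct the Schur-complement form.
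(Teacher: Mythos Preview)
Your overall strategy matches the paper's: pass to chain-groups via canonical supplementary chains, invoke Theorem~\ref{thm:main}, and translate back. The gap is exactly where you flagged it, and neither your primary plan nor your fallback closes it.

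For the primary plan: knowing that the canonical chain on $V'_i$ is eulerian for $\tilde N_i = N_j\delete X\contract Z$ only tells you that any $f\in N_j$ with $\form{f(v),a'(v)}_K=0$ for all $v$ satisfies $f\cdot V'_i=0$; it does \emph{not} force $f=0$. Concretely, take $V_j=\{1,2\}$, $M_j=\left(\begin{smallmatrix}0&1\\-1&0\end{smallmatrix}\right)$, $X=\{1\}$, $Z=\{2\}$, $V'_i=\emptyset$: the minor is trivial, the canonical chain is vacuously eulerian there, yet $M_j[Z]=(0)$ is singular. So the eulerian transfer alone cannot certify nonsingularity of $M_j[Z]$. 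Your fallback via Theorem~\ref{thm:minor} and Theorem~\ref{thm:equivmatrix} does not help either: the iterated use of Lemma~\ref{lem:minor}(2) may pivot on sets meeting $V'_i$, so the intermediate fundamental matrix of $\tilde N_i$ need not carry canonical supplementary chains on $V'_i$, and the comparison in Theorem~\ref{thm:equivmatrix} then introduces a genuine extra pivot and possible negations on $V'_i$ that you cannot argue away.

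The paper's fix (Lemma~\ref{lem:finerminor}) is to first replace $(X,Z)$ by a pair with $Z$ \emph{minimal} among contraction sets realizing the same minor. Your eulerian argument then becomes the first half of a contradiction: if $M_j[Z]$ were singular, Proposition~\ref{prop:eulerian} yields a nonzero $f\in N_j$ with $\form{f(v),a'(v)}_K=0$ everywhere; as you observed, $f\cdot V'_i=0$, and since the canonical chain is eulerian for $N_j$ there is $w\in Z$ with $f(w)\neq 0$. One then checks, using Lemma~\ref{lem:basic}, that subtracting a suitable multiple of $f$ shows $N_j\contract Z\delete X = N_j\contract(Z\setminus\{w\})\delete(X\cup\{w\})$, contradicting minimality of $Z$. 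With $M_j[Z]$ nonsingular in hand, the rest of your argument (Proposition~\ref{prop:chainpivot}, Lemma~\ref{lem:minor}(1), uniqueness in Proposition~\ref{prop:chain2matrix}) goes through exactly as you wrote.
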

To move from the principal pivot operation given by
Theorem~\ref{thm:minor}
to a Schur complement, 
we need a finer control how we obtain a matrix representation
under taking a minor of a Lagrangian chain-group.

\begin{LEM}\label{lem:finerminor}
  Let $M_1$, $M_2$ be skew-symmetric or symmetric matrices over a
  field $\F$.
 For $i=1,2$, let $N_i$ be a Lagrangian chain-group
  with a special matrix representation $(M_i,a_i,b_i)$
  where $a_i(v)=\tbinom10$, $b_i(v)=\tbinom01$ for all $v$.
  If $N_1=N_2\contract X\delete Y$, then 
  $M_1$ is a principal submatrix of 
  the Schur complement $(M_2/A)$ 
  of some nonsingular principal submatrix $A$ in $M_2$.
\end{LEM}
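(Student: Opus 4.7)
The plan is to choose $Y_0\subseteq X$ to be a maximal subset for which $M_2[Y_0]$ is nonsingular (possibly $Y_0=\emptyset$) and then verify that $A=M_2[Y_0]$ works, i.e., $M_1 = (M_2/A)[V_1]$ where $V_1 = V_2 \setminus (X\cup Y)$. The heart of the argument is to show that the rows and columns of the Schur complement $(M_2/A)$ indexed by $X\setminus Y_0$ are zero; once we know this, pivoting on $Y_0$ and discarding the irrelevant rows and columns recovers exactly the standard representation of $N_1$.

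To kill the block $(M_2/A)[X\setminus Y_0, X\setminus Y_0]$, I would invoke Theorem~\ref{thm:tucker}: the principal submatrix $(M_2*Y_0)[T]$ is nonsingular if and only if $M_2[Y_0\cup T]$ is. Maximality of $Y_0$ in $X$ makes $M_2[Y_0\cup T]$ singular for every nonempty $T\subseteq X\setminus Y_0$. Applied to $T=\{z\}$ this gives $(M_2/A)[z,z]=0$, and applied to $T=\{z,z'\}$ the resulting $2\times 2$ symmetric or skew-symmetric principal submatrix has zero diagonal and singular determinant $\pm a^2$, so the off-diagonal $a$ also vanishes.

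To kill the block $(M_2/A)[V_1, X\setminus Y_0]$ is where the hypothesis on $(M_1,a_1,b_1)$ enters. The eulerianness of $a_1=\tbinom{1}{0}$ for $N_1 = N_2\contract X\delete Y$, translated via the fundamental basis of $N_2$, is precisely the inclusion $\ker M_2[X] \subseteq \ker M_2[V_1, X]$: a nonzero $\tbinom{1}{0}$-valued chain in $N_1$ would correspond to a nonzero $c_X \in \ker M_2[X]$ with $M_2[V_1, X]c_X \neq 0$. Using the block decomposition of $M_2[X]$ along $Y_0$ together with the first zero block, every $c_X\in\ker M_2[X]$ is parameterised by its $(X\setminus Y_0)$-part via $c_{Y_0} = -M_2[Y_0]^{-1}M_2[Y_0, X\setminus Y_0]\,c_{X\setminus Y_0}$, and substituting into $M_2[V_1,X]c_X = 0$ collapses to $(M_2/A)[V_1, X\setminus Y_0]\,c_{X\setminus Y_0} = 0$ for every $c_{X\setminus Y_0}$, which forces $(M_2/A)[V_1, X\setminus Y_0] = 0$; by the symmetric or skew-symmetric structure of $(M_2/A)$ also $(M_2/A)[X\setminus Y_0, V_1] = 0$.

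With these zero blocks in hand, I would pivot $N_2$ on $Y_0$ (Proposition~\ref{prop:chainpivot}); in the resulting special representation one has $a'(v)=\tbinom{0}{1}$ for $v\in Y_0$ and $a'(v)=\tbinom{1}{0}$ elsewhere. By Lemma~\ref{lem:minor}, dropping a position with $a'(v)=\tbinom{0}{1}$ corresponds to contracting it and dropping one with $a'(v)=\tbinom{1}{0}$ corresponds to deleting it, so dropping $Y_0$ yields a standard representation of $N_2\contract Y_0$ with matrix $(M_2/A)$, and further dropping $Y$ yields the standard representation of $N_2\contract Y_0\delete Y$ on $V_1\cup(X\setminus Y_0)$. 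Because the rows and columns at $X\setminus Y_0$ are zero, deletion and contraction coincide at those positions, and dropping $X\setminus Y_0$ produces the standard representation of $N_2 \contract Y_0 \contract (X\setminus Y_0) \delete Y = N_1$ with matrix $(M_2/A)[V_1]$; uniqueness of the matrix in a standard representation (Proposition~\ref{prop:chain2matrix}) then gives $M_1 = (M_2/A)[V_1]$. The main obstacle is the cross-block step: only because $(M_1,a_1,b_1)$ has the \emph{standard} form $a_1(v)=\tbinom{1}{0}$, $b_1(v)=\tbinom{0}{1}$ (no sign flips) does the cross-block of the Schur complement vanish, and that is precisely what upgrades Theorem~\ref{thm:minor}'s ``pivot plus negation'' into a clean Schur complement here.
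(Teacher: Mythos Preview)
Your argument is correct and genuinely different from the paper's. The paper instead replaces $X$ by a \emph{minimal} set $X'$ (with a suitable $Y'$) still satisfying $N_1=N_2\contract X'\delete Y'$, and then shows directly that $M_2[X']$ must be nonsingular: if not, the same eulerianness fact you use produces a nonzero chain $f\in N_2$ supported on $X'$ witnessing that one element $w\in X'$ can be moved from the contraction set to the deletion set without changing the minor, contradicting minimality. With $M_2[X']$ nonsingular, one pivot on $X'$ and Lemma~\ref{lem:minor} finish immediately, so the Schur complement is taken on all of $X'$ and there is no residual block to kill.

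Your route keeps $X$ fixed, pivots only on a maximal nonsingular $Y_0\subseteq X$, and then uses Tucker's theorem plus the kernel inclusion $\ker M_2[X]\subseteq \ker M_2[V_1,X]$ (which is exactly the content of $a_1$ being eulerian for $N_1$) to force the $X\setminus Y_0$ rows and columns of $(M_2/A)$ to vanish, so that deletion and contraction agree there. This is more computational but has the virtue of making completely explicit why the standard choice $a_1(v)=\tbinom10$, $b_1(v)=\tbinom01$ (with no sign flips) is what eliminates the negations of Theorem~\ref{thm:minor}: it is precisely the hypothesis that collapses the cross-block. The paper's proof is shorter and stays in the chain-group language; yours gives a bit more, namely that $A$ may be taken as $M_2[Y_0]$ for any inclusion-maximal nonsingular $Y_0\subseteq X$.
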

\begin{proof}
  For $i=1,2$, let $V_i$ be the ground set of $N_i$.
  We may assume that $X$ is a minimal set having some $Y$ such that
  $N_1=N_2\contract X \delete Y$. 
  We may assume $X\neq \emptyset$, because otherwise we apply
  Lemma~\ref{lem:minor}.
  Note that the Schur
  complement of a $\emptyset\times\emptyset$ submatrix in $M_2$ is
  $M_2$ itself.

  Suppose that $M_2[X]$ is singular.
  Let $a_X$ be a chain on $V_2$ to $K=\F^2$ such that 
  $a_X(v)=\tbinom10$ if $v\notin X$ and $a_X(v)=\tbinom01$ if $v\in
  X$.
  By Proposition~\ref{prop:eulerian}, $a'$ is not an eulerian chain of $N_2$.
  Therefore there exists a nonzero chain $f\in N_2$ such that
  $\form{f(v),a_X(v)}_K=0$ for all $v\in V_2$.
  Then $f\cdot V_1=0$ because 
  $f\cdot V_1\in N_1$
  and 
  $a_1$ is an eulerian chain of $N_1=N_2\contract X\delete Y$.
  There exists $w\in X$ such that $f(w)\neq 0$
  because $a_2$ is an eulerian chain of $N_2$.
  For every chain $g\in N_2$, if $\form{g(v),\tbinom10}_K=0$ for $v\in
  Y$
  and $\form{g(v),\tbinom01}_K=0$ for $v\in X$, 
  then $g(w)=c_gf(w)$ for some $c_g\in \F$ by Lemma~\ref{lem:basic}
  and therefore $g\cdot V_1=(g-c_gf)\cdot V_1\in N_2\contract (X\setminus \{w\})
  \delete (Y\cup \{w\})$.
  This implies that $N_2\contract X\delete Y\subseteq N_2\contract
  (X\setminus \{w\})\delete (Y\cup\{w\})$. 
  Since $\dim (N_2\contract X\delete Y)=\dim( N_2\contract
  (X\setminus \{w\})\delete (Y\cup\{w\}))=|V_1|$, we have
 $N_2\contract X\delete Y= N_2\contract
  (X\setminus \{w\})\delete (Y\cup\{w\})$, contradictory to the
  assumption that $X$ is minimal.
  This proves that $M_2[X]$ is nonsingular.

  By Proposition~\ref{prop:chainpivot}, 
  $(M',a',b')$ is another special matrix representation of $N_1$
  where $M'=M*X$ if $\form{\,,\,}_K$ is symmetric
  or $M'=I_X (M*X)$ if $\form{\,,\,}_K$ is skew-symmetric
  and $a'$, $b'$ are given in Proposition~\ref{prop:chainpivot}.
  We observe that $a'\cdot V_1=a_1$ and $b'\cdot V_1=b_1$.
  We apply Lemma~\ref{lem:minor} to deduce that $(M'[V_1],a_1,b_1)$ is
  a matrix representation of $N_1$. This implies that $M'[V_1]=M_1$.
  Let $A=M_2[X]$.
  Notice that $M'[V_1]=(M_2/A)[V_1]$.
 This proves the lemma.
\end{proof}

\begin{proof}[Proof of Theorem~\ref{thm:mainmatrix}]
  By taking an infinite subsequence,  we may assume that all of the
  matrices in the sequence are skew-symmetric or symmetric.
  Let $K=\F^2$ and assume $\form{\,,\,}_K$ is a bilinear form that is
  symmetric if the matrices are skew-symmetric
  and skew-symmetric if the matrices are symmetric.
  Let $N_i$ be the Lagrangian chain-group represented by a matrix
  representation $(M_i,a_i,b_i)$ where $a_i(x)=\tbinom10$,
  $b_i(x)=\tbinom01$ for all $x$.
  Then by Theorem~\ref{thm:main}, there are $i<j$ such that $N_i$ is
  simply isomorphic to a minor of $N_j$. 
  By Lemma~\ref{lem:finerminor}, we deduce the conclusion.
\end{proof}

Now let us consider the notion of delta-matroids, a generalization of matroids.
Delta-matroids lack the notion of the connectivity
and hence it is not clear how to define the branch-width naturally
for delta-matroids. 
We define the branch-width of a $\F$-representable
delta-matroid
as the minimum rank-width of all skew-symmetric or symmetric matrices
over $\F$
representing the delta-matroid.
Then we can deduce the following theorem from Theorem~\ref{thm:repdelta} and Proposition~\ref{prop:matdelta}.
\begin{THM}\label{thm:maindelta}
  Let $\F$ be a finite field and $k$ be a constant. 
  Every infinite sequence 
  $\MM_1$, $\MM_2$, $\ldots$ 
  of $\F$-representable delta-matroids of branch-width at most $k$
  has a pair $i<j$ such that 
  $\MM_i$ is isomorphic to a minor of $\MM_j$.
\end{THM}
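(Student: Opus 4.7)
The plan is to deduce Theorem~\ref{thm:maindelta} from Theorem~\ref{thm:mainmatrix} together with Proposition~\ref{prop:matdelta}, essentially by choosing a concrete matrix representative for each $\MM_i$ and translating the Schur-complement conclusion back into delta-matroid language.

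First I would unpack the definition. By hypothesis each $\MM_i$ is $\F$-representable, so there is a skew-symmetric or symmetric $V_i\times V_i$ matrix $A_i$ over $\F$ and a set $X_i\subseteq V_i$ with $\MM_i=\MM(A_i)\Delta X_i$; and because the branch-width of $\MM_i$ is at most $k$, I may choose $A_i$ so that $\rwd(A_i)\le k$. By passing to an infinite subsequence I may assume all $A_i$ are of the same type (all skew-symmetric, or all symmetric). Now Theorem~\ref{thm:mainmatrix} applies to the sequence $A_1,A_2,\ldots$ and produces indices $i<j$ such that $A_i$ is isomorphic to a principal submatrix of $(A_j/A)$ for some nonsingular $A=A_j[Y]$.

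Second, I would translate this to delta-matroids. The Schur complement $(A_j/A)$ equals $(A_j*Y)[V_j\setminus Y]$, so a principal submatrix of $(A_j/A)$ is simultaneously a principal submatrix of $A_j*Y$. Hence $A_i$ is isomorphic to a principal submatrix of a matrix obtained from $A_j$ by a pivot (no negations needed). Proposition~\ref{prop:matdelta} then gives that $\MM(A_i)$ is isomorphic to a minor of $\MM(A_j)$.

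Finally, I would absorb the twists $X_i, X_j$ into the minor relation: minors of delta-matroids are closed under twisting and deletion, and equivalent delta-matroids have the same minors up to twisting. Concretely, $\MM(A_j)=\MM_j\Delta X_j$ is a minor of $\MM_j$, so any minor of $\MM(A_j)$ is a minor of $\MM_j$; and since $\MM_i=\MM(A_i)\Delta X_i$, transporting the isomorphism $\MM(A_i)\cong \MM'$ (where $\MM'$ is a minor of $\MM_j$) turns $\MM_i$ into a twist of (an isomorphic copy of) $\MM'$, which is again a minor of $\MM_j$. This yields $i<j$ with $\MM_i$ isomorphic to a minor of $\MM_j$, as required. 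The only substantive step is the reduction to Theorem~\ref{thm:mainmatrix}; the rest is bookkeeping about equivalence classes of delta-matroids and the identification of $(A/Y)$ as a block of $A*Y$.
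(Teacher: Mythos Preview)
Your proof is correct and follows the same route as the paper's: choose matrix representatives of bounded rank-width, apply Theorem~\ref{thm:mainmatrix}, then pass to delta-matroid minors via Proposition~\ref{prop:matdelta}. You handle the twists $X_i$ more explicitly than the paper (which simply declares ``we may assume $X_i=\emptyset$''), and your preliminary passage to a subsequence of uniform type is harmless though unnecessary, since Theorem~\ref{thm:mainmatrix} already allows the sequence to mix skew-symmetric and symmetric matrices.
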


\begin{proof}
  Let $M_1$, $M_2$, $\ldots$ be an infinite sequence of skew-symmetric
  or symmetric matrices over $\F$ such that the rank-width of $M_i$ is equal
  to the branch-width of $\MM_i$ and $\MM_i=\MM(M_i)\Delta X_i$.
  We may assume that $X_i=\emptyset$ for all $i$.
  By Theorem~\ref{thm:mainmatrix}, there are $i<j$ such that 
  $M_i$ is isomorphic to a principal submatrix of 
  the Schur complement of a nonsingular principal submatrix in $M_j$.
  This implies that $\MM_i$ is a minor of $\MM_j$ as a delta-matroid.
\end{proof}

In particular, when $\F=GF(2)$, then binary skew-symmetric matrices 
correspond to adjacency matrices of simple graphs.
Then taking a pivot on such matrices is equivalent to taking a
sequence of graph pivots on the corresponding graphs.
We say that a simple graph $H$ is a \emph{pivot-minor} of a simple graph $G$
if $H$ is obtained from $G$ by applying pivots and deleting vertices.
As a matter of a fact, a pivot-minor of a simple graph corresponds to a minor of an even binary
delta-matroid.
The \emph{rank-width} of a simple graph is defined to be the rank-width of
its adjacency matrix over $\F$.
Then Theorem~\ref{thm:mainmatrix} or~\ref{thm:maindelta} implies the
following corollary, originally proved  by Oum~\cite{Oum2004a}.
\begin{COR}[Oum~\cite{Oum2004a}]
  Let $k$ be a constant. Every infinite sequence $G_1$, $G_2$,
  $\ldots$ of simple graphs of rank-width at most $k$ 
   has  a pair $i<j$ such that $G_i$ is isomorphic
  to a pivot-minor of $G_j$.
\end{COR}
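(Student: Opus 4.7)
The plan is to apply Theorem~\ref{thm:mainmatrix} directly to the sequence of adjacency matrices of the graphs $G_1, G_2, \ldots$. For a simple graph $G$, the adjacency matrix $A(G)$ is a $0,1$ symmetric matrix with zero diagonal; over $GF(2)$, symmetric with zero diagonal coincides with skew-symmetric (in the sense defined in the paper), so we may regard $A(G_i)$ as a skew-symmetric matrix over $GF(2)$. By the definition of rank-width of a graph given just above the corollary, $\rwd(A(G_i)) = \rwd(G_i) \le k$, so the hypothesis of Theorem~\ref{thm:mainmatrix} is met with $\F = GF(2)$.

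Applying that theorem produces indices $i<j$ and a subset $Y \subseteq V(G_j)$ such that $A = A(G_j)[Y]$ is nonsingular and $A(G_i)$ is isomorphic to a principal submatrix of $(A(G_j)/A)$. By the very definition of the pivot operation in Section~\ref{sec:prelim}, the Schur complement $(A(G_j)/A)$ is exactly the lower-right block of $A(G_j)*Y$, namely $(A(G_j)*Y)[V(G_j)\setminus Y]$. Hence $A(G_i)$ is isomorphic to a principal submatrix of $(A(G_j)*Y)[V(G_j)\setminus Y]$, after possibly deleting further rows and columns indexed by the same set.

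Next I invoke the correspondence, stated in the paragraph just preceding the corollary, between the matrix pivot $M*Y$ for a binary skew-symmetric matrix $M$ and a sequence of graph pivots performed on the graph whose adjacency matrix is $M$. Thus $A(G_j)*Y$ is the adjacency matrix of a simple graph $G_j'$ that is obtained from $G_j$ by a sequence of graph pivots. Restricting to the principal submatrix indexed by $V(G_j) \setminus Y$ and then deleting further rows and columns corresponds to deleting the vertices in $Y$ and then some further vertices from $G_j'$. The resulting graph is, by definition, a pivot-minor of $G_j$. Since isomorphism of adjacency matrices of simple graphs (with simultaneous row/column permutation) is exactly graph isomorphism, we conclude that $G_i$ is isomorphic to a pivot-minor of $G_j$, proving the corollary.

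The main obstacle is the bridge between the linear-algebraic operation $M*Y$ and the combinatorial operation of a sequence of graph pivots in $G_j$ — once one accepts that the nonsingularity of $A(G_j)[Y]$ guarantees the existence of such a sequence (a classical result of Bouchet), the remainder of the argument is purely bookkeeping. Alternatively, one could route the proof through Theorem~\ref{thm:maindelta} by noting that $\MM(A(G))$ is the even binary delta-matroid of $G$ and that its minors correspond exactly to the pivot-minors of $G$; both routes reduce to the same underlying correspondence.
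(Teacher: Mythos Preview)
Your proof is correct and follows essentially the same route as the paper: the paragraph immediately preceding the corollary already sketches exactly this argument, viewing adjacency matrices as binary skew-symmetric matrices, applying Theorem~\ref{thm:mainmatrix}, and invoking the correspondence between the matrix pivot $M*Y$ and a sequence of graph pivots. Your identification of the Schur complement as the block $(A(G_j)*Y)[V(G_j)\setminus Y]$ and the observation that principal submatrices correspond to vertex deletion fill in the details the paper leaves implicit.
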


\section{Corollaries to matroids and graphs}\label{sec:matroid}
In this section, we will show how Theorem~\ref{thm:main} implies 
the
theorem by Geelen et al.~\cite{GGW2002} on 
well-quasi-ordering 
of $\F$-representable matroids of bounded branch-width for a finite
field $\F$
as well as 
the theorem by Robertson and Seymour~\cite{RS1990} on 
well-quasi-ordering of  graphs of bounded tree-width.

We will briefly review the notion of matroids in the first subsection.
In the second subsection, we will discuss how Tutte chain-groups are
related to representable matroids and Lagrangian chain-groups.
In the last subsection, we deduce the theorem of Geelen et
al.~\cite{GGW2002} 
on matroids 
which in turn implies
the theorem of Robertson and Seymour~\cite{RS1990} on graphs.

\subsection{Matroids}
Let us review matroid theory
briefly. For more on matroid theory, we refer
readers to the book by Oxley \cite{Oxley1992}.

A matroid $M=(E,r)$ is  a
pair formed by a finite set  $E$ of \emph{elements} 
and a \emph{rank} function $r:2^E\rightarrow \mathbb{Z}$ 
satisfying the following axioms:
\begin{enumerate}[i)]
\item $0\le r(X)\le |X|$ for all $X\subseteq E$.
\item If $X\subseteq Y\subseteq E$, then $r(X)\le r(Y)$.
\item  For all $X,Y\subseteq E$,
 $r(X)+r(Y)\ge r(X\cap  Y)+r(X\cup Y)$.
\end{enumerate}
A subset $X$ of $E$ is called \emph{independent} if 
$r(X)=|X|$. 
A \emph{base} is a maximally independent set.
We write $E(M)=E$.
For simplicity, we write $r(M)$ for $r(E(M))$.
For $Y\subseteq  E(M)$, 
$M\setminus Y$ is the  matroid
$(E(M)\setminus Y,r')$ where $r'(X)=r(X)$. 
For $Y\subseteq E(M)$, 
$M/Y$ is the  matroid
$(E(M)\setminus Y,r')$ where $r'(X)=r(X\cup Y)-r(Y)$. 
If $Y=\{e\}$, we denote $M\setminus e=M\setminus
\{e\}$ and $M/e=M/\{e\}$.
It is routine to
prove that $M\setminus Y$ and $M/Y$ are matroids.
Matroids of the form $M\setminus X/Y$ are called a
\emph{minor} of the matroid $M$.

Given a field $\F$ and a set of  vectors in $\F^m$, we can construct a
matroid by letting $r(X)$ be the dimension of the vector space spanned
by vectors in $X$. If a matroid permits this construction, then we
say that the matroid is \emph{$\F$-representable}
or \emph{representable} over $\F$.

The \emph{connectivity function} of a matroid $M=(E,r)$ is 
$\lambda_{M}(X)=r(X)+r(E\setminus X)-r(E)+1$.
A \emph{branch-decomposition} of a matroid $M=(E,r)$ 
is a pair $(T,\mathcal L)$ of a subcubic tree $T$ and
a bijection $\mathcal L:E\rightarrow \{t: \text{$t$ is a leaf of
  $T$}\}$.
For each edge $e=uv$ of the tree $T$, the connected components of
$T\setminus e$ induce a partition $(X_e,Y_e)$ of the leaves of $T$
and we call $\lambda_{M} (\mathcal L^{-1}(X_e))$ the
\emph{width} of $e$.
The \emph{width} of a branch-decomposition $(T,\mathcal L)$ is the
maximum width of all edges of $T$.
The \emph{branch-width} $\bw(M)$ of a matroid $M=(E,r)$
is the minimum width of all its branch-decompositions.
(If $|E|\le 1$, then we define that $\bw(M)=1$.)

\subsection{Tutte chain-groups}
We review Tutte chain-groups \cite{Tutte1971}.
For a finite set $V$ and a field $\F$, 
a \emph{chain} on $V$ to $\F$ is a mapping $f:V\rightarrow \F$.
The \emph{sum} $f+g$ of two chains $f$, $g$ is the chain on $V$ satisfying 
\[(f+g)(x)=f(x)+g(x) \quad\text{for all }x\in V.\]
If $f$ is a chain on $V$ to $\F$ and $\lambda \in \F$, the \emph{product}
$\lambda f$ is a chain on $V$ such that 
\[(\lambda f)(x)=\lambda f(x)\quad\text{for all }x\in V.\]
It is easy to see that the set of all chains on $V$ to $\F$, denoted by
$\F^V$, is a vector space. 
A \emph{Tutte chain-group} on $V$ to $\F$ is a subspace of $\F^V$.
The \emph{support} of  a chain $f$ on $V$ to $\F$ is $\{x\in V: f(x)\neq 0\}$.

\begin{THM}[Tutte \cite{Tutte1965a}]\label{thm:tuttechain}
  Let $N$ be a Tutte chain-group on a finite set $V$ to a field $\F$.
  The minimal nonempty supports of $N$ form the circuits of a
  $\F$-representable
  matroid
  $M\{N\}$ on $V$,
  whose rank is equal to $|V|-\dim N$.
  Moreover every $\F$-representable matroid $M$ admits  a Tutte chain-group $N$
  such that $M=M\{N\}$.
\end{THM}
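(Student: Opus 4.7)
The plan is to establish the correspondence via the orthogonal complement. Given a Tutte chain-group $N$ on $V$ to $\F$, define
\[N^\bot = \Bigl\{g \in \F^V : \sum_{v\in V} f(v)g(v) = 0 \text{ for all } f \in N\Bigr\}.\]
Standard linear algebra gives $\dim N + \dim N^\bot = |V|$ and $(N^\bot)^\bot = N$. Pick any basis $g_1,\ldots,g_k$ of $N^\bot$, where $k = |V| - \dim N$, and let $G$ be the $k \times V$ matrix whose rows are $g_1,\ldots,g_k$. Let $M$ be the vector matroid on $V$ realized by the columns of $G$; this is $\F$-representable by construction and has rank equal to $\operatorname{rank}(G) = \dim N^\bot = |V| - \dim N$.

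Next I would identify the circuits of $M$ with minimal nonempty supports of $N$. A subset $X \subseteq V$ is dependent in $M$ precisely when the columns of $G$ indexed by $X$ are linearly dependent, i.e.\ when there is a nonzero chain $f \in \F^V$ with support contained in $X$ such that $Gf = 0$ (treating $f$ as a column). The condition $Gf = 0$ says exactly that $f$ is orthogonal to every element of $N^\bot$, i.e.\ $f \in (N^\bot)^\bot = N$. Hence the dependent sets of $M$ are exactly those sets containing the support of some nonzero chain in $N$, so the circuits of $M$ (the minimal dependent sets) are exactly the minimal nonempty supports of chains in $N$. Calling this matroid $M\{N\}$ gives the first half of the theorem.

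For the converse, suppose $M$ is an $\F$-representable matroid on $V$ of rank $r$, realized by the columns of some $r \times V$ matrix $G$ over $\F$. Let $N^\bot$ be the row space of $G$, an $r$-dimensional subspace of $\F^V$, and set $N = (N^\bot)^\bot$, a Tutte chain-group on $V$ to $\F$ of dimension $|V| - r$. The argument above applied to this $N$ shows $M\{N\} = M$, completing the equivalence.

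The main obstacle is not really an obstacle: I do not need to verify the circuit exchange axiom directly, because once I know the minimal supports coincide with the circuits of the vector matroid defined by $G$, the matroid axioms come for free. The only subtle point is the identity $(N^\bot)^\bot = N$, which holds for arbitrary fields because $\F^V$ is finite-dimensional and the pairing $(f,g) \mapsto \sum_v f(v)g(v)$ is non-degenerate on $\F^V$. Thus the entire proof reduces to this small piece of linear algebra together with the definition of the vector matroid.
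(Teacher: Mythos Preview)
Your proof is correct. The paper does not actually give its own proof of this theorem; it simply states the result with attribution to Tutte~\cite{Tutte1965a} and uses it as a black box. Your argument via the orthogonal complement $N^\bot$ and the column matroid of a basis matrix for $N^\bot$ is the standard modern proof: the key identity $(N^\bot)^\bot=N$ (which you rightly flag as the only nontrivial ingredient, valid because the pairing on $\F^V$ is non-degenerate in finite dimension) reduces everything to the definition of a vector matroid, and the rank computation and converse follow immediately.
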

Let $S$ be a subset of $V$.
For a  chain $f$ on $V$ to $\F$, 
we denote $f\cdot S$ for a chain on $S$ to $\F$ 
such that $(f\cdot S)(v)=f(v)$ for all $v\in S$.
For a Tutte chain-group $N$ on $V$ to $\F$,
we let $N\cdot S=\{f\cdot S: f\in N\}$,
$N\times S=\{f\cdot S: f\in N, f(v)=0\text{ for all }v\notin S\}$,
and $N^\bot=\{g:  \text{$g$ is a chain on $V$ to $\F$},
\sum_{v\in V} f(v)g(v)=0 \text{ for all } f\in N\}$.

A \emph{minor} of a Tutte chain-group $N$ on $V$ to $\F$ is
a Tutte chain-group  of the form $(N\times S)\cdot T$ where
$T\subseteq S\subseteq V$.
By definition, it is easy to see that
$M\{N\}\setminus X=M\{N\times (V\setminus X)\}$
and 
$M\{N\}/ X=M\{N\cdot (V\setminus X)\}$.
So the notion of representable matroid minors is equivalent to 
the notion of Tutte chain-group minors.

Tutte {\cite[Theorem VIII.7.]{Tutte1984}} showed the following theorem. The
proof is basically equivalent to the proof of Theorem~\ref{thm:algdual}.
\begin{LEM}[Tutte {\cite[Theorem VIII.7.]{Tutte1984}}]
  \label{lem:tuttedual}
  If $N$ is a Tutte chain-group on $V$ to $\F$
  and $X\subseteq V$, then 
  $(N\cdot X)^\bot=N^\bot\times X$.
\end{LEM}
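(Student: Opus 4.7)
The plan is to mirror the proof of Theorem~\ref{thm:algdual} verbatim, only replacing the bilinear pairing on $K^V$ by the ordinary dot-product pairing $\sum_{v\in V} f(v)g(v)$ on $\F^V$; the extension-by-zero argument works identically since that pairing vanishes on coordinates where either chain is zero.

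For the inclusion $(N\cdot X)^\bot \subseteq N^\bot \times X$, I would take any chain $f$ on $X$ to $\F$ with $f \in (N\cdot X)^\bot$ and extend it to a chain $f_1$ on $V$ to $\F$ by setting $f_1(v) = f(v)$ for $v \in X$ and $f_1(v) = 0$ for $v \in V\setminus X$. Then for every $g \in N$,
\[
\sum_{v\in V} f_1(v)\,g(v) \;=\; \sum_{v\in X} f(v)\,(g\cdot X)(v) \;=\; 0,
\]
because $f\in (N\cdot X)^\bot$ and $g\cdot X \in N\cdot X$. Hence $f_1 \in N^\bot$, and since $f_1$ vanishes off $X$, the restriction $f = f_1\cdot X$ lies in $N^\bot \times X$ by definition.

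For the reverse inclusion $N^\bot \times X \subseteq (N\cdot X)^\bot$, I would start from $f \in N^\bot \times X$: then $f$ is the restriction to $X$ of some $f_1 \in N^\bot$ with $f_1(v)=0$ for $v \notin X$. For any $g\in N$,
\[
\sum_{v\in X} f(v)\,(g\cdot X)(v) \;=\; \sum_{v\in V} f_1(v)\,g(v) \;=\; 0
\]
because $f_1 \perp g$. Since every element of $N\cdot X$ has the form $g\cdot X$ for some $g\in N$, we conclude $f \in (N\cdot X)^\bot$.

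There is no real obstacle to this proof; it is entirely formal, and the only subtlety is being careful to distinguish the chain $f$ on $X$ from its zero-extension $f_1$ on $V$. In fact, since the paper has already carried out this exact argument in the $K^V$ setting (Theorem~\ref{thm:algdual}), one could simply observe that the present lemma is that argument with $K$ replaced by $\F$ and the pairing $\langle\cdot,\cdot\rangle_K$ replaced by field multiplication, and cite the earlier proof.
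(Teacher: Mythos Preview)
Your proposal is correct and is exactly the approach the paper indicates: the paper does not write out a separate proof of this lemma but simply remarks that it is the argument of Theorem~\ref{thm:algdual} with $K$ replaced by $\F$ and $\form{\,,\,}$ by the dot product, which is precisely what you have done.
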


We now relate Tutte chain-groups to Lagrangian chain-groups.
For a chain $f$ on $V$ to $\F$,
let $\dup{f}$, $\ddown{f}$ be chains on $V$ to $K=\F^2$
such that
$\dup{f}(v)= \tbinom{f(v)}{0}\in K$,
$\ddown{f}(v)=\tbinom{0}{f(v)}\in K$
for every $v\in V$.
For a Tutte chain-group $N$ on $V$ to $\F$,
we let  $\widetilde N$ be a Tutte chain-group on $V$ to $K$ such that
$\widetilde N= \{\dup{f}+\ddown{g}: f\in N, g\in N^\bot\}$.
Assume that $\form{\,,\,}_K$ is symmetric.

\begin{LEM}\label{lem:chaindim}
  If $N$ is a Tutte chain-group on $V$ to $\F$,
  then $\widetilde N$ is a Lagrangian chain-group on $V$ to $K=\F^2$.
\end{LEM}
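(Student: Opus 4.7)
The plan is to verify directly the two defining conditions of a Lagrangian chain-group for $\widetilde{N}$: namely that it is a subspace of $K^V$ of dimension $|V|$ that is totally isotropic with respect to the (symmetric) bilinear form $\form{\,,\,}_K$ on $K^V$. Being a subspace is immediate from the definition, since $\dup{(f_1+f_2)} = \dup{f_1} + \dup{f_2}$, $\ddown{(g_1+g_2)} = \ddown{g_1} + \ddown{g_2}$, and similarly for scalar multiples, while $N$ and $N^\bot$ are themselves subspaces of $\F^V$.

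For isotropy, I would compute $\form{\dup{f_1}+\ddown{g_1},\dup{f_2}+\ddown{g_2}}$ for arbitrary $f_1,f_2\in N$ and $g_1,g_2\in N^\bot$. Using $b^+\bigl(\tbinom{a}{b},\tbinom{c}{d}\bigr)=ad+bc$, the pointwise terms $\form{\dup{f_1}(v),\dup{f_2}(v)}_K$ and $\form{\ddown{g_1}(v),\ddown{g_2}(v)}_K$ both vanish (because one coordinate is always zero), so the bilinear form reduces to
\[
\sum_{v\in V} \bigl(f_1(v)g_2(v) + g_1(v)f_2(v)\bigr),
\]
and each of the two sums vanishes by the very definition of $N^\bot$ in the Tutte chain-group sense (which is exactly the standard bilinear pairing $\sum_v f(v)g(v)$ on $\F^V$).

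For the dimension count, I would observe that the map $\Phi\colon N\oplus N^\bot \to \widetilde{N}$ defined by $\Phi(f,g)=\dup{f}+\ddown{g}$ is surjective by definition of $\widetilde{N}$ and is injective: if $\dup{f}+\ddown{g}=0$, then reading off each coordinate $\tbinom{f(v)}{g(v)}=0$ forces $f=g=0$. Hence $\dim(\widetilde{N})=\dim(N)+\dim(N^\bot)$, and since $N$ and $N^\bot$ are orthogonal complements for the standard non-degenerate pairing on $\F^V$ we have $\dim(N)+\dim(N^\bot)=|V|$, giving $\dim(\widetilde{N})=|V|$ as required.

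There is essentially no obstacle here; this is a bookkeeping lemma bridging the two notions of chain-group. The only place to be slightly careful is distinguishing the two meanings of ``$\bot$'' in play: the Tutte-style pairing $\sum_v f(v)g(v)$ used to define $N^\bot\subseteq \F^V$ versus the bilinear form $\form{\,,\,}$ on $K^V$ built from $\form{\,,\,}_K$. The whole point of pairing a chain in $N$ with one in $N^\bot$ via the $\dup{\cdot}/\ddown{\cdot}$ construction is that the latter form restricted to $\widetilde{N}\times \widetilde{N}$ reduces exactly to two copies of the former, which is why isotropy falls out.
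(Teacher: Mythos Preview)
Your proof is correct and follows essentially the same approach as the paper's: verify isotropy by expanding $\form{\dup{f_1}+\ddown{g_1},\dup{f_2}+\ddown{g_2}}$ using the symmetric form $b^+$ and the Tutte orthogonality relation, then obtain $\dim\widetilde N=|V|$ via the isomorphism $\widetilde N\cong N\oplus N^\bot$ together with $\dim N+\dim N^\bot=|V|$. The paper is slightly terser (it checks only the three types of cross terms and leaves the bilinear expansion implicit), but the argument is the same.
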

\begin{proof}
  By definition, 
  for all $f\in N$ and $g\in N^\bot$, 
  $\form{\dup f,\dup f}=\form{\ddown g,\ddown g}=0$ and 
  $\form{\dup f,\ddown g}=\sum_{v\in V}f(v)g(v)=0$. 
 Thus, $\widetilde N$ is isotropic.
  Moreover, $\dim N+\dim N^\bot= \dim \F^V= |V|$ and therefore
  $\dim\widetilde N=|V|$. (Note that $\widetilde N$ is isomorphic to 
  $N\oplus N^\bot$ as  a vector space.)
 So $\widetilde N$ is a  Lagrangian chain-group.
\end{proof}
\begin{LEM}\label{lem:chainminor}
  Let $N_1$, $N_2$ be Tutte chain-groups on $V_1,V_2$ (respectively) to $\F$.
  Then 
  $N_1$ is a minor of $N_2$ as a Tutte chain-group
  if and only if 
  $\widetilde N_1$ is a minor of $\widetilde N_2$ as a Lagrangian chain-group.
\end{LEM}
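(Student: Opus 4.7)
The plan is to establish the two ``translation identities''
\[
\widetilde{N}\delete T \;=\; \widetilde{N\cdot(V\setminus T)}
\qquad\text{and}\qquad
\widetilde{N}\contract T \;=\; \widetilde{N\times(V\setminus T)},
\]
valid for every Tutte chain-group $N$ on a finite set $V$ to $\F$ and every $T\subseteq V$, and then to read off the lemma by iterating them.

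For the first identity, I would take $h\in\widetilde{N}$, write it as $h=\dup{f}+\ddown{g}$ with $f\in N$ and $g\in N^\bot$, and note that $h(v)=\tbinom{f(v)}{g(v)}$. Since $\form{\,,\,}_K$ is symmetric, $\form{h(v),\tbinom{1}{0}}_K=g(v)$, so the deletion condition becomes $g\cdot T=0$. Restricting such an $h$ to $V\setminus T$ produces a chain of the form $\dup{(f\cdot(V\setminus T))}+\ddown{(g\cdot(V\setminus T))}$ in which $f\cdot(V\setminus T)$ ranges over $N\cdot(V\setminus T)$ and $g\cdot(V\setminus T)$ ranges over $N^\bot\times(V\setminus T)$; by Tutte's dual formula (Lemma~\ref{lem:tuttedual}), $N^\bot\times(V\setminus T)=(N\cdot(V\setminus T))^\bot$, so these restrictions make up exactly $\widetilde{N\cdot(V\setminus T)}$. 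The contraction identity is proved by the same recipe, using $\form{h(v),\tbinom{0}{1}}_K=f(v)$.

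Given the two identities, the forward direction is immediate: a Tutte minor $N_1=(N_2\times S)\cdot T$ with $T\subseteq S\subseteq V_2$ gives
\[
\widetilde{N_1}\;=\;\widetilde{N_2}\contract(V_2\setminus S)\delete(S\setminus T),
\]
a Lagrangian minor of $\widetilde{N_2}$ since $V_2\setminus S$ and $S\setminus T$ are disjoint. For the converse, if $\widetilde{N_1}=\widetilde{N_2}\delete X\contract Y$ with $X,Y\subseteq V_2$ disjoint, iterating the identities yields $\widetilde{N_1}=\widetilde{(N_2\cdot(V_2\setminus X))\times(V_2\setminus X\setminus Y)}$. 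The map $N\mapsto\widetilde{N}$ is injective (one recovers $N$ as the set of ``first coordinates'' of chains in $\widetilde{N}$), so $N_1=(N_2\cdot(V_2\setminus X))\times(V_2\setminus X\setminus Y)$; and this set equals $(N_2\times(V_2\setminus Y))\cdot(V_2\setminus X\setminus Y)$, since both reduce to $\{f\cdot(V_2\setminus X\setminus Y):f\in N_2,\ f|_Y=0\}$. Hence $N_1$ is in the standard Tutte-minor form, with $S'=V_2\setminus Y$ and $T'=V_2\setminus X\setminus Y$.

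The main obstacle is purely bookkeeping: Lagrangian deletion corresponds to Tutte-restriction ($\cdot$) and Lagrangian contraction to the Tutte-subspace operation ($\times$) --- so the operations ``swap'' between the two worlds --- and a chain-group produced in the ``wrong'' order must be rewritten into the canonical Tutte-minor form $(N\times S)\cdot T$. Once Lemma~\ref{lem:tuttedual} is applied and these correspondences are tracked carefully, no deeper ingredient is needed.
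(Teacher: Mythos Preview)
Your proposal is correct and follows essentially the same route as the paper: both reduce the lemma to two translation identities relating the Tutte operations $\cdot,\times$ on $N$ to Lagrangian deletion and contraction on $\widetilde N$, with Lemma~\ref{lem:tuttedual} doing the real work. The only differences are stylistic: the paper obtains equality in each identity via a dimension count (both sides are Lagrangian on $S$ by Lemma~\ref{lem:chaindim}, hence of dimension $|S|$) and leaves the converse direction implicit, whereas you argue the inclusions directly and are more explicit about the converse---spelling out injectivity of $N\mapsto\widetilde N$ and the rewriting of $(N\cdot S)\times T$ into the canonical Tutte-minor form $(N\times S')\cdot T'$.
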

\begin{proof}
  Let $N$ be a Tutte chain-group on $V$ to $\F$
  and let $S$ be a subset of $V$.
  It is enough to show that 
  $\widetilde{N\cdot S}= \widetilde N \contract (V\setminus S)$
  and 
  $\widetilde{N\times S}= \widetilde N \delete (V\setminus S)$.

  Let us first show that $\widetilde{N\cdot S}=
  \widetilde N\contract (V\setminus S)$.
  Since $\dim \widetilde{N\cdot S}=
  \dim \widetilde N\contract (V\setminus S)=|S|$ by Lemma~\ref{lem:chaindim}, 
  it is enough to show that $\widetilde{N\cdot S}\subseteq 
  \widetilde N\contract (V\setminus S)$.
  Suppose that $f\in N\cdot S$ and $g\in (N\cdot S)^\bot$. 
  By Lemma~\ref{lem:tuttedual}, $(N\cdot S)^\bot= N^\bot \times S$.
  So there are $\bar f,\bar g\in N$ such that 
  $\bar f\cdot S=f$, $\bar g\cdot S=g$, and $\bar g(v)=0$ 
  for all $v\in  V\setminus S$.
  Now it is clear that 
  $\dup f+\ddown g=
  (\dup {\bar f} +\ddown {\bar g})\cdot S \in N\contract  (V\setminus S)$.

  Now it remains to show that 
  $\widetilde{N\times S}= \widetilde N \delete (V\setminus S)$.
  Let $f\in N\times S$, $g\in (N\times S)^\bot= N^\bot \cdot S$.
  A similar argument shows that 
  $\dup f+\ddown g\in \widetilde N\delete S$
  and therefore 
  $\widetilde{N\times S}\subseteq\widetilde N \delete (V\setminus S)$.
  This proves our claim because these two Lagrangian chain-groups have the
  same dimension.
\end{proof}

Now let us show that
for a Tutte chain-group $N$ on $V$ to $\F$, 
the branch-width of a matroid $M\{N\}$
is 
exactly one more than 
the branch-width of the Lagrangian chain-group $\widetilde N$. 
It is enough to show the following lemma.
\begin{LEM}\label{lem:matroidconn}
  Let $N$ be a Tutte chain-group on $V$ to $\F$.
  Let $X$ be a subset of $V$.
  Then, 
  \[\lambda_{M\{N\}} (X) = \lambda_{\widetilde N} (X) +1.\]
\end{LEM}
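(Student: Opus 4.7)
The plan is to reduce both sides of the claimed identity to expressions involving only $\dim N$, $\dim(N \times X)$, and $\dim(N \times (V \setminus X))$, and then verify they differ by exactly one. First I would unfold the matroid side: by Theorem~\ref{thm:tuttechain} the rank of $M\{N\}$ is $|V| - \dim N$, and since $M\{N\}|X = M\{N\} \setminus (V \setminus X) = M\{N \times X\}$, the submatroid ranks are $r(X) = |X| - \dim(N \times X)$ and $r(V \setminus X) = |V \setminus X| - \dim(N \times (V \setminus X))$. Substituting into $\lambda_{M\{N\}}(X) = r(X) + r(V \setminus X) - r(V) + 1$ gives
\[\lambda_{M\{N\}}(X) = \dim N - \dim(N \times X) - \dim(N \times (V \setminus X)) + 1.\]

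The heart of the argument is computing $\dim(\widetilde N \times X)$. Every element of $\widetilde N$ has the form $\dup h + \ddown g$ with $h \in N$, $g \in N^\bot$, and takes the value $\tbinom{h(v)}{g(v)}$ at $v$; hence such a chain vanishes off $X$ precisely when both $h$ and $g$ vanish off $X$. Because $\F\tbinom{1}{0}$ and $\F\tbinom{0}{1}$ are complementary in $K$, the map $(h,g) \mapsto \dup h + \ddown g$ is injective, so I would conclude
\[\dim(\widetilde N \times X) = \dim(N \times X) + \dim(N^\bot \times X).\]
Now Tutte duality (Lemma~\ref{lem:tuttedual}) gives $N^\bot \times X = (N \cdot X)^\bot$, and the standard linear-algebra identity $\dim(N \cdot X) + \dim(N \times (V \setminus X)) = \dim N$ (the Tutte analogue of Lemma~\ref{lem:linalg}, proved identically) lets me rewrite $\dim(N^\bot \times X) = |X| - \dim N + \dim(N \times (V \setminus X))$.

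Finally I would add the analogous formula for $V \setminus X$ and use $\dim \widetilde N = |V|$ from Lemma~\ref{lem:chaindim}. The two $|X|$ and $|V \setminus X|$ terms combine to $|V|$, which cancels with $\dim \widetilde N$, and a factor of $2$ pulls out of every remaining term; dividing by $2$ yields
\[\lambda_{\widetilde N}(X) = \dim N - \dim(N \times X) - \dim(N \times (V \setminus X)),\]
which is exactly $\lambda_{M\{N\}}(X) - 1$. The argument is essentially a bookkeeping exercise, so there is no serious obstacle; the only point that requires a little care is confirming the direct-sum decomposition $\widetilde N \cap K^X \cong (N \cap \F^X) \oplus (N^\bot \cap \F^X)$, which is immediate from the coordinate structure of $K = \F^2$.
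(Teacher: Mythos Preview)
Your proof is correct and follows essentially the same route as the paper: both sides are reduced to $\dim N - \dim(N\times X) - \dim(N\times(V\setminus X))$ via Theorem~\ref{thm:tuttechain} for the matroid ranks, the decomposition $\widetilde N\times X\cong (N\times X)\oplus(N^\bot\times X)$, Lemma~\ref{lem:tuttedual}, and the rank--nullity identity $\dim(N\cdot X)+\dim(N\times(V\setminus X))=\dim N$. The only cosmetic difference is that the paper invokes the Lagrangian formula $\lambda_{\widetilde N}(X)=|X|-\dim(\widetilde N\times X)$ from Lemma~\ref{lem:even} directly, whereas you compute both $\dim(\widetilde N\times X)$ and $\dim(\widetilde N\times(V\setminus X))$ and use the symmetric definition of $\lambda_{\widetilde N}$; the arithmetic is the same either way.
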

\begin{proof}
  Recall that the connectivity function of a matroid is 
  $\lambda_{M\{N\}}(X)=r(X)+r(V\setminus X)-r(V)+1$
  and the connectivity function of a Lagrangian chain-group
  is $\lambda_{\widetilde N}(X)=|X|-\dim (\widetilde N\times X)$. 
  Let $Y=V\setminus X$.
  Let $r$ be the rank function of the matroid $M\{N\}$.
  Then $r(X)$ is equal to the rank of the matroid $M\{N\}\setminus Y
  =M\{N\times X\}$.
  So by Theorem~\ref{thm:tuttechain}, $r(X)=|X|-\dim (N\times X)$.
  Therefore 
  \[\lambda_{M\{N\}}(X)= \dim N-\dim (N\times X)   -\dim (N\times Y)
  +1.\]
  From our construction, 
  $\lambda_{\widetilde N}(X)= |X|-\dim (\widetilde N\times X)
  = |X|-(\dim (N\times X)+\dim (N^\bot\times X))
  =|X|-\dim N\times X -  \dim (N\cdot X)^\bot 
  =|X|-\dim N\times X -  (|X|-\dim N\cdot X)
  =\dim N\cdot X -\dim N\times X$.
  It is enough to show that $\dim N=\dim N\times Y+\dim N\cdot X$.
  Let $L:N\rightarrow N\cdot X$ be a surjective 
  linear transformation such that
  $L(f)=f\cdot X$.
  Then $\dim \ker L= \dim (\{f\in N: f\cdot X=0\})= \dim (N\times Y)$.
  Thus, $\dim N\cdot X=\dim N-\dim N\times Y$.
\end{proof}


\subsection{Application to matroids}

We are now ready to deduce the following theorem by Geelen, Gerards, and
Whittle~\cite{GGW2002} from Theorem~\ref{thm:main}.
\begin{THM}[Geelen, Gerards, and Whittle~\cite{GGW2002}]\label{thm:wqomat}
  Let $k$ be a constant and let $\F$ be a finite field.
  If $M_1,M_2,\ldots$ is an infinite sequence of
  $\F$-representable matroids having branch-width at most $k$, then 
  there exist $i$ and $j$ with $i<j$ such that 
  $M_i$ is isomorphic to a minor of $M_j$.
\end{THM}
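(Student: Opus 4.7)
The plan is to translate the statement about representable matroids into one about Lagrangian chain-groups, invoke Theorem~\ref{thm:main}, and translate back, using the dictionary developed in Section~\ref{sec:matroid}.

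First, I would use Theorem~\ref{thm:tuttechain} to choose, for each $i$, a Tutte chain-group $N_i$ on some ground set $V_i$ to $\F$ such that $M_i=M\{N_i\}$. Then I would form the associated chain-group $\widetilde{N_i}$ on $V_i$ to $K=\F^2$ (equipped with a symmetric bilinear form), which is Lagrangian by Lemma~\ref{lem:chaindim}. The crucial quantitative input is Lemma~\ref{lem:matroidconn}: $\lambda_{\widetilde{N_i}}(X)=\lambda_{M_i}(X)-1$ for every $X\subseteq V_i$, so every branch-decomposition of $M_i$ of width at most $k$ is also a branch-decomposition of $\widetilde{N_i}$ of width at most $k-1$. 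Hence $\bw(\widetilde{N_i})\le k-1$ for every $i$.

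Next, I would apply Theorem~\ref{thm:main} (the simplified version) to the infinite sequence $\widetilde{N_1},\widetilde{N_2},\ldots$ of Lagrangian chain-groups over the finite field $\F$ having branch-width at most $k-1$. This yields indices $i<j$ together with a simple isomorphism from $\widetilde{N_i}$ to a minor of $\widetilde{N_j}$. Using this simple isomorphism to relabel $V_i$, I may assume $V_i\subseteq V_j$ and that $\widetilde{N_i}$ is literally a minor of $\widetilde{N_j}$ in the sense of Section~\ref{sec:lagrangian}.

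Finally, Lemma~\ref{lem:chainminor} then gives that $N_i$ is a minor of $N_j$ as a Tutte chain-group, so there exist $T\subseteq S\subseteq V_j$ with $N_i=(N_j\times S)\cdot T$. The identities $M\{N\}\setminus X=M\{N\times(V\setminus X)\}$ and $M\{N\}/X=M\{N\cdot(V\setminus X)\}$ noted just after Theorem~\ref{thm:tuttechain} then yield $M_i=M\{N_i\}=M_j\setminus(V_j\setminus S)/(S\setminus T)$, exhibiting $M_i$ as a minor of $M_j$.

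The proof is essentially bookkeeping once the correspondences $N\leftrightarrow\widetilde{N}$ and $N\leftrightarrow M\{N\}$ have been shown to respect minors (Lemma~\ref{lem:chainminor} and the remarks after Theorem~\ref{thm:tuttechain}) and branch-width (Lemma~\ref{lem:matroidconn}). The only subtle point is that Theorem~\ref{thm:main} produces a simple isomorphism rather than literal containment, so one must be careful to relabel the ground set before quoting Lemma~\ref{lem:chainminor}; but both constructions $M\{N\}$ and $\widetilde{N}$ are invariant under relabeling, so this causes no difficulty.
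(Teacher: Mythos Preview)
Your proposal is correct and follows essentially the same approach as the paper: pass from $M_i$ to a Tutte chain-group $N_i$, then to the Lagrangian chain-group $\widetilde{N_i}$, use Lemma~\ref{lem:matroidconn} to bound branch-width, apply Theorem~\ref{thm:main}, and return via Lemma~\ref{lem:chainminor}. You spell out more explicitly the relabeling needed to pass from a simple isomorphism to a literal minor before invoking Lemma~\ref{lem:chainminor}, and the final translation from Tutte chain-group minors to matroid minors, but these are exactly the steps the paper's terse proof leaves implicit.
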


To deduce this theorem, we use Tutte chain-groups. 
\begin{proof}
  Let $N_i$ be the Tutte chain-group on $E(M_i)$ to $\F$ such that 
  $M\{N_i\}=M_i$.
  By Lemma~\ref{lem:matroidconn}, the branch-width of the Lagrangian 
  chain-group $\widetilde N_i$ is at
  most $k-1$.
  By Theorem~\ref{thm:main}, there are $i<j$ such that 
  $\widetilde N_i$ is simply isomorphic to a minor of $\widetilde N_j$.
  This implies that $M_i=M\{N_i\}$ is isomorphic to a minor  of
  $M_j=M\{N_j\}$ by Lemma~\ref{lem:chainminor}.
\end{proof}

Geelen et al.~\cite{GGW2002}
showed that Theorem~\ref{thm:wqomat} implies the
following theorem. (We omit the definition of tree-width.)
Thus our theorem also implies the following theorem of Robertson and Seymour.
\begin{THM}[Robertson and Seymour \cite{RS1990}]
  Let $k$ be a constant.
  Every infinite sequence  $G_1,G_2,\ldots$ of 
  graphs having tree-width at most $k$
  has a pair $i<j$ such that 
  $G_i$ is isomorphic to a minor of $G_j$.
\end{THM}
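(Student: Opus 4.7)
The plan is to pass through cycle matroids. To each graph $G$ associate $M(G)$, the cycle matroid on $E(G)$, which is representable over every field, in particular over $GF(2)$. Two facts make this a promising reduction. First, cycle-matroid minors correspond to graph minors: $M(G)/e = M(G/e)$ whenever $e$ is not a loop, and $M(G)\setminus e = M(G\setminus e)$, so every matroid minor of $M(G)$ is of the form $M(H)$ for a graph minor $H$ of $G$. Second, bounded tree-width of $G$ yields bounded branch-width of $M(G)$: from a tree decomposition of $G$ of width $w$, one constructs a branch-decomposition of $M(G)$ of width at most $w+1$ by placing each edge at a leaf adjacent to a node whose bag contains both its endpoints. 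Hence the sequence $M(G_1), M(G_2), \ldots$ consists of $GF(2)$-representable matroids of branch-width at most $k+1$, and Theorem~\ref{thm:wqomat} produces indices $i<j$ together with a graph minor $H$ of $G_j$ such that $M(G_i)$ is isomorphic, as a matroid, to $M(H)$.

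The main obstacle is translating the matroid-level isomorphism $M(H)\cong M(G_i)$ into an honest graph isomorphism $H\cong G_i$. By Whitney's $2$-isomorphism theorem the cycle matroid does not see Whitney twists at $2$-separations, so $H$ need not be isomorphic to $G_i$. To circumvent this, I would invoke the labeled version of Theorem~\ref{thm:main} (which yields a labeled form of Theorem~\ref{thm:wqomat}), attaching to each edge of $G_i$ a small piece of local incidence data that pins down $G_i$ among all graphs sharing its cycle matroid. One scheme that should work is to decompose each $G_i$ into its $3$-connected components following Tutte/Whitney, and tag each edge by a finite tuple recording which component it lies in and how that component is glued to its neighbors in the decomposition. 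Because tree-width is bounded by $k$, the set of possible tags is finite, hence trivially well-quasi-ordered. A label-preserving matroid minor then forbids nontrivial Whitney twists on the identified piece, producing the desired graph minor $G_i$ of $G_j$ and completing the proof.
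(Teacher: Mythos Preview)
The paper does not actually supply a proof here: it simply records that Geelen, Gerards, and Whittle \cite{GGW2002} derived this statement from Theorem~\ref{thm:wqomat}, and defers to that reference for the reduction. Your overall route through cycle matroids and Theorem~\ref{thm:wqomat} is exactly what the paper points to, and your first paragraph is correct.

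You also correctly isolate the real difficulty, Whitney $2$-isomorphism, but your proposed fix has a genuine gap. The labels you describe (``which $3$-connected component the edge lies in and how that component is glued to its neighbours'') are specific to each $G_i$, not drawn from a fixed set $Q$, and your assertion that bounded tree-width makes the tag set finite is false. The $3\times n$ grid is $3$-connected with tree-width $3$ for every $n$, so $3$-connected pieces of graphs of tree-width at most $k$ can be arbitrarily large; and a long path already shows that the number of $2$-separations, hence the amount of gluing data, is unbounded. Even setting finiteness aside, an edge-label recording only component membership cannot detect a Whitney twist, which rearranges the gluing at a $2$-separation without moving any edge between components. A reduction that does work proceeds differently: first reduce to $2$-connected $G_i$ via the block tree and a Higman-type argument, then replace each $G_i$ by $\widehat G_i$, obtained by adding one apex vertex adjacent to all of $V(G_i)$. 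Each $\widehat G_i$ is $3$-connected of tree-width at most $k+1$, so Whitney's theorem turns the matroid conclusion $M(\widehat G_i)\cong M(H)$ into a graph isomorphism $\widehat G_i\cong H$, and an easy branch-set swap shows that $\widehat G_i$ a minor of $\widehat G_j$ forces $G_i$ a minor of $G_j$. Your sketch does not provide any such construction, so as written the argument is incomplete.
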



\providecommand{\bysame}{\leavevmode\hbox to3em{\hrulefill}\thinspace}
\providecommand{\MR}{\relax\ifhmode\unskip\space\fi MR }
\providecommand{\MRhref}[2]{%
  \href{http://www.ams.org/mathscinet-getitem?mr=#1}{#2}
}
\providecommand{\href}[2]{#2}


\end{document}